\theoremstyle{plain}
\newtheorem{proposition}{Proposition}[section]
\newtheorem{theorem}[proposition]{Theorem}
\newtheorem{lemma}[proposition]{Lemma}
\newtheorem{corollary}[proposition]{Corollary}
\theoremstyle{definition}
\newtheorem{definition}[proposition]{Definition}
\newtheorem{observation}[proposition]{Observation}
\theoremstyle{remark}
\newtheorem{remark}[proposition]{Remark}
\DeclareMathOperator{\Aut}{Aut}
\DeclareMathOperator{\dimension}{dim}
\DeclareMathOperator{\SL}{SL}
\DeclareMathOperator{\GL}{GL}
\DeclareMathOperator{\SO}{SO}
\DeclareMathOperator{\SU}{SU}
\DeclareMathOperator{\PSL}{PSL}
\DeclareMathOperator{\Hom}{Hom}
\DeclareMathOperator{\End}{End}
\DeclareMathOperator{\Cc}{\mathcal{C}}
\DeclareMathOperator{\Fc}{\mathcal{F}}
\DeclareMathOperator{\Lc}{\mathcal{L}}
\DeclareMathOperator{\Oc}{\mathcal{O}}
\DeclareMathOperator{\Qc}{\mathcal{Q}}
\DeclareMathOperator{\Cb}{\mathbb{C}}
\DeclareMathOperator{\Db}{\mathbb{D}}
\DeclareMathOperator{\Hb}{\mathbb{H}}
\DeclareMathOperator{\Kb}{\mathbb{K}}
\DeclareMathOperator{\Nb}{\mathbb{N}}
\DeclareMathOperator{\Pb}{\mathbb{P}}
\DeclareMathOperator{\Rb}{\mathbb{R}}
\DeclareMathOperator{\Zb}{\mathbb{Z}}
\newcommand{\abs}[1]{\left|#1\right|}
\newcommand{\norm}[1]{\left\|#1\right\|}
\begin{document}

\title{Rigidity of complex convex divisible sets}
\author{Andrew M. Zimmer}\address{Department of Mathematics, University of Michigan, Ann Arbor, MI 48109.}
\email{aazimmer@umich.edu}
\date{\today}
\keywords{}

\begin{abstract} An open convex set in real projective space is called divisible if there exists a discrete group of projective automorphisms which acts co-compactly. There are many examples of such sets and a theorem of Benoist implies that many of these examples are strictly convex, have $C^1$ boundary, and have word hyperbolic dividing group. In this paper we study a notion of convexity in complex projective space and show that the only divisible complex convex sets with $C^1$ boundary are the projective balls. 
\end{abstract}

\maketitle

\section{Introduction}

A set $\Omega \subset \Pb(\Rb^{d+1})$ is called \emph{convex} if whenever $L$ is a real projective line then  $\Omega \cap L$ is connected. A convex set $\Omega \subset \Pb(\Rb^{d+1})$ called \emph{proper} if $\overline{\Omega}$ does not contain any projective lines. A proper convex open set $\Omega$ is called \emph{divisible} if there is a discrete group $\Gamma \leq \PSL(\Rb^{d+1})$ preserving $\Omega$ such that $\Gamma \backslash \Omega$ is compact. 

The most basic examples of divisible convex open sets come from hyperbolic geometry. Let $\Omega \subset \Pb(\Rb^{d+1})$ be the projective ball
\begin{align*}
\Omega = \left\{ [1 : x_1 : x_2 : \dots : x_d ] : \sum x_i^2  < 1\right\}.
\end{align*}
Then $\SO^+(1,d) \leq \PSL(\Rb^{d+1})$ acts properly and transitively on $\Omega$ and in particular if $\Gamma \leq \SO^+(1,d)$ is a cocompact lattice then $\Gamma$ divides $\Omega$. There is also a natural metric, the \emph{Hilbert metric} $d_{\Omega}$, on a proper open convex set $\Omega \subset \Pb(\Rb^{d+1})$ and when $\Omega$ is a projective ball $(\Omega, d_{\Omega})$ is isometric to real hyperbolic $d$-space. 

Additional examples of divisible convex sets can be constructed by deformations. To be more precise: given a finitely generated group $\Gamma_0$ one can consider the space
\begin{align*}
\Fc_{\Gamma_0} = \{  \rho \in & \Hom(\Gamma_0, \PSL(\Rb^{d+1}))  \text{ is faithful and the image divides} \\
& \text{an open properly convex set in $\Pb(\Rb^{d+1})$} \}.
\end{align*}
By theorems of Kozul~\cite{K1968} and Thurston (see~\cite[Theorem 3.1]{G1988}) this set is open in $\Hom(\Gamma_0, \PSL(\Rb^{d+1}))$ and assuming that $\Fc_{\Gamma_0}$ contains a strongly irreducible representation Benoist proved that it is closed~\cite{B2005}. Johnson and Millson~\cite{JM1987} proved that for certain cocompact lattices $\Gamma_0 \leq \SO^+(1,d)$ it is possible to deform the inclusion map $\rho_0:\Gamma_0 \rightarrow \SO^+(1,d) \rightarrow \PSL(\Rb^{d+1})$ to obtain a family of representations $\rho_s : \Gamma_0 \rightarrow \PSL(\Rb^{d+1})$ such that $\rho_s(\Gamma_0)$ is Zariski dense for all $s \neq 0$. This implies that $\Fc_{\Gamma_0}$ will be a non-trivial deformation space of $\Gamma_0$. For surface groups Goldman has given an explicit parametrization of this deformation space~\cite{G1990}. Thus the theory of divisible convex sets in real projective space leads very naturally to a type of ``Higher Teichm\"{u}ller theory.'' 

Using a bending construction Kapovich~\cite{K2007} has produced divisible convex sets such that the dividing groups are not quasi-isometric to any real hyperbolic lattice. Moreover, the dividing groups that Kapovich produces are word hyperbolic and the resulting compact manifolds $\Gamma \backslash \Omega$ are the Gromov-Thurston examples~\cite{GT1987} of compact manifolds admitting a Riemannian metric of negative curvature but not one of constant negative curvature. 

Divisible real convex sets also have enough structure so that it is possible to prove interesting and non-trivial theorems. For instance, Benoist has proven the following:

\begin{theorem}\cite[Theorem 1.1]{B2004}
Suppose $\Omega \subset \Pb(\Rb^{d+1})$ is a proper convex open set. If $\Omega$ is divisible by $\Gamma \leq \PSL(\Rb^{d+1})$ then 
\begin{center}
$\Omega$ is strictly convex $\Leftrightarrow$ $\partial \Omega$ is $C^1$ $\Leftrightarrow$ $\Gamma$ is word hyperbolic.
\end{center}
\end{theorem}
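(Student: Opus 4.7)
The plan is to reduce the three-way equivalence to a statement about Gromov hyperbolicity of the Hilbert metric $(\Omega, d_\Omega)$, and then to use projective duality to bridge the asymmetric-looking conditions ``strictly convex'' and ``$C^1$ boundary.''

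First, I would observe that $(\Omega, d_\Omega)$ is a proper geodesic metric space on which $\Gamma$ acts properly discontinuously and cocompactly by isometries, since every element of $\PSL(\Rb^{d+1})$ preserving $\Omega$ is an isometry of $d_\Omega$. The Svarc--Milnor lemma then yields that $\Gamma$ is quasi-isometric to $(\Omega, d_\Omega)$, so $\Gamma$ is word hyperbolic if and only if $(\Omega, d_\Omega)$ is Gromov hyperbolic. This reduces the theorem to proving
\begin{equation*}
(\Omega, d_\Omega) \text{ Gromov hyperbolic} \;\Longleftrightarrow\; \Omega \text{ strictly convex} \;\Longleftrightarrow\; \partial\Omega \text{ is } C^1.
\end{equation*}

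Second, I would bring in projective duality. The dual $\Omega^* \subset \Pb((\Rb^{d+1})^*)$ is a proper convex open set divided by the contragredient representation of $\Gamma$. Under this duality, strict convexity of $\Omega$ corresponds to $C^1$-regularity of $\partial\Omega^*$ and vice versa, because extreme points of $\Omega$ parametrize the unique supporting hyperplanes of $\Omega^*$. Since word hyperbolicity is intrinsic to $\Gamma$ as an abstract group, each implication involving $\Omega$ has a twin obtained by applying it to $\Omega^*$; consequently it is enough to prove the single pair of implications ``strictly convex $\Rightarrow$ Gromov hyperbolic'' and ``Gromov hyperbolic $\Rightarrow$ strictly convex'', and the $C^1$ statements follow by duality.

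Third, for the ``strictly convex $\Rightarrow$ Gromov hyperbolic'' direction I would work with the cross-ratio formula defining $d_\Omega$: strict convexity forces Hilbert geodesic rays to have unique endpoints in $\partial\Omega$, and the cocompact action of $\Gamma$ promotes the resulting pointwise control on the thinness of ideal triangles to a uniform $\delta$. For the converse, I would argue by contradiction: if $\partial\Omega$ is not strictly convex then it contains a nontrivial segment $[p,q]$, and one can find a sequence $\gamma_n \in \Gamma$ such that $\gamma_n \cdot x_0$ approaches $p$ while $\gamma_n^{-1}(x_0)$ remains inside a fixed fundamental domain. Passing to a geometric limit, the translates $\gamma_n \Omega$ converge to a proper convex set $\Omega_\infty$ whose closure contains the segment $[p,q]$ in a controlled way; inside $(\Omega_\infty, d_{\Omega_\infty})$ one extracts an isometrically embedded Euclidean strip from this segment, contradicting Gromov hyperbolicity of the limit.

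The main obstacle will be this last step. One must carefully arrange the sequence $\gamma_n$ so that the normalized pictures converge to a proper (not degenerate) convex limit $\Omega_\infty$; verify that the Hilbert cross-ratios pass to the limit and produce a genuinely Euclidean patch; and check that Gromov hyperbolicity is stable under these geometric limits, a fact that itself depends on uniform quasi-isometry constants provided by the cocompact action of $\Gamma$. Making these three ingredients precise, and patching them together into a single limiting argument, is the technical heart of the proof.
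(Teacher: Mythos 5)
This statement is quoted in the paper from Benoist [B2004, Theorem 1.1] and is not proved there, so there is no in-paper argument to compare against; your proposal can only be measured against Benoist's published proof. Your overall architecture --- \v{S}varc--Milnor to trade word hyperbolicity of $\Gamma$ for Gromov hyperbolicity of $(\Omega,d_\Omega)$, projective duality to identify strict convexity of $\Omega$ with $C^1$-regularity of $\partial\Omega^*$, and limiting arguments driven by cocompactness --- is genuinely the same skeleton as Benoist's. Note, though, that even the duality step hides a real theorem: you need $\Omega^*$ to be \emph{divisible}, i.e.\ the contragredient action of $\Gamma$ on $\Omega^*$ to be cocompact, which you assert without justification.

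The substantive gaps are in the two implications you yourself single out as the core. For ``strictly convex $\Rightarrow$ Gromov hyperbolic'' you say that cocompactness ``promotes pointwise control to a uniform $\delta$,'' but the actual mechanism is more specific and is the heart of the matter: assuming a sequence of triangles whose thinness constants tend to infinity, one translates by $\gamma_n\in\Gamma$ so that the worst point stays in a fixed compact set; the crucial observation is that the limiting pointed convex set is then projectively equivalent to $\Omega$ itself (the translates are literally $(\Omega,\gamma_n x_n)$ since $\gamma_n$ preserves $\Omega$), and the degenerate limit triangle forces a segment in $\partial\Omega$, contradicting strict convexity. Without identifying that the limit is $\Omega$ again, the argument does not close, because strict convexity is not preserved under arbitrary geometric limits of convex bodies. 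For the converse, your plan to extract ``an isometrically embedded Euclidean strip'' from a single boundary segment is not justified: a segment in $\partial\Omega$ does not in general produce an isometric flat (that requires something like a two-dimensional face, as for a simplex, whose Hilbert metric is a normed space). The standard route (Karlsson--Noskov, and Benoist) is instead to exhibit, directly from the cross-ratio formula, a sequence of geodesic triangles with a vertex approaching the interior of the segment whose thinness blows up; this needs no geometric limit at all. So the strategy is the right one, but the two steps you flag as ``the technical heart'' are exactly where the proposal currently has no working mechanism.
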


Now it is well known that the rank one symmetric space $\Kb \Hb^d$ can be realized as a ball in $\Pb(\Kb^{d+1})$ with a natural metric. Thus it is reasonable to ask if any of the above theory can be generalized to other projective spaces.  

In this paper, we consider versions of the definitions above in complex projective space and prove a result showing that the theory in the complex setting is much more rigid. Already, Goldman observed that by Weil rigidity the only deformations of a complex hyperbolic lattice $\Gamma \leq \SU(1,d) \leq \SL(\Cb^{d+1})$ are by conjugation (this argument can be found in Goldman's MathSciNet review of~\cite{K2011}). 

There are many different types of convexity for sets in $\Pb(\Cb^{d+1})$ (see~\cite{APS2004} or~\cite{H2007}), but perhaps the most natural is $\Cb$-convexity. A set $\Omega \subset \Pb(\Cb^{d+1})$ is called \emph{$\Cb$-convex} if whenever $L$ is a complex projective line then $L \cap \Omega$ and $L \setminus \Omega \cap L$ are connected. This definition has many of the properties one would expect: the projective image of any $\Cb$-convex set is $\Cb$-convex and the complex dual $\Omega^*$ of $\Omega$ will be $\Cb$-convex (see for instance~\cite{APS2004}).

A $\Cb$-convex set is called \emph{proper} if $\overline{\Omega}$ does not contain a complex projective line. A proper $\Cb$-convex open set $\Omega$ is called \emph{divisible} if there exists a discrete group $\Gamma \leq \PSL(\Cb^{d+1})$ preserving $\Omega$ such that $\Gamma \backslash \Omega$ is compact. With these definitions we are ready to state our main result.

\begin{theorem} 
\label{thm:main}
Suppose $\Omega$ is a divisible  proper $\Cb$-convex open set with $C^1$ boundary. Then $\Omega$ is a projective ball.
\end{theorem}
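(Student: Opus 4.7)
The plan is to adapt Benoist's renormalization strategy from the real setting, using projective dynamics together with the Kobayashi geometry of $\Cb$-convex domains.

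First, I would upgrade $C^1$ regularity of $\partial\Omega$ to a strict form of $\Cb$-convexity. The $C^1$ hypothesis combined with $\Cb$-convexity provides at each $p \in \partial\Omega$ a unique supporting complex hyperplane $T_p$, and I would show $T_p \cap \overline{\Omega} = \{p\}$ for all $p$. If some complex disk $D$ lay in $T_p \cap \partial\Omega$, then translating $D$ by elements $\gamma_n \in \Gamma$ and passing to a Hausdorff limit using cocompactness would produce a complex disk in $\partial\Omega$ at an arbitrary boundary point, contradicting $C^1$ regularity somewhere on the relative boundary of such a disk. This mirrors Benoist's argument in the real case.

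Next, using this strict $\Cb$-convexity together with known Kobayashi-geometric properties of $\Cb$-convex domains with $C^1$ boundary, the Kobayashi metric on $\Omega$ should be complete and Gromov hyperbolic. Projective automorphisms are Kobayashi isometries, so $\Gamma$ acts cocompactly by isometries on a proper Gromov hyperbolic metric space. Hence $\Gamma$ is word hyperbolic, its Gromov boundary is identified with $\partial\Omega \cong S^{2d-1}$, and every infinite-order $\gamma \in \Gamma$ acts loxodromically with attracting and repelling fixed points $p^\pm \in \partial\Omega$.

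The heart of the argument is a projective renormalization at a loxodromic fixed point. Fix $x_0 \in \Omega$ and $\gamma_n \in \Gamma$ with $\gamma_n x_0 \to p^+$. Select projective transformations $A_n \in \PSL(\Cb^{d+1})$ that send $\gamma_n x_0$ back to $x_0$ and align the supporting complex hyperplane at $\gamma_n x_0$ with a fixed model. Passing to a subsequence, $A_n \Omega$ should Hausdorff-converge to a proper $\Cb$-convex open set $\wt\Omega$ with a distinguished boundary point $\wt p$, at which strict $\Cb$-convexity persists. Conjugates of loxodromic $\Gamma$-elements then survive in the limit as projective automorphisms of $\wt\Omega$, endowing $\wt\Omega$ with a noncompact automorphism group that contains a one-parameter dilation fixing $\wt p$.

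Finally, I would argue that $\wt\Omega$ is projectively equivalent to the Siegel upper half space (the unbounded projective ball model of $\Cb\Hb^d$). Varying the loxodromic through different $\Gamma$-elements with the same attracting fixed point should produce further projective automorphisms of $\wt\Omega$ fixing $\wt p$, in particular unipotent elements playing the role of Heisenberg translations. Together with strict $\Cb$-convexity, this ought to classify $\wt\Omega$ as the Siegel domain, and the renormalization then identifies $\Omega$ projectively with $\wt\Omega$, so $\Omega$ is a projective ball. The main obstacle is this final classification step: in the real setting Benoist invokes structure theory of divisible open convex cones (Vinberg, Koszul, \ldots), and the corresponding structural input is not directly available for $\Cb$-convex sets, so the technical core will be producing enough projective automorphisms of $\wt\Omega$ to characterize it and then transferring the result back to $\Omega$.
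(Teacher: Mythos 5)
Your proposal has the right overall flavor (renormalization, dynamics of projective automorphisms, reduction to a model domain), but it rests on inputs that are not available and it concedes the decisive step. The most serious gap is the claim that the Kobayashi metric on a $\Cb$-convex domain with merely $C^1$ boundary is complete and Gromov hyperbolic: this is not a known theorem and should not be expected at this level of regularity (Gromov hyperbolicity of the Kobayashi metric requires much stronger hypotheses, such as strong pseudoconvexity or finite type). The paper deliberately avoids this: it uses Dubois's Hilbert metric and proves only the far weaker statement that $(\Omega,d_\Omega)$ is \emph{quasi-geodesic} (Theorem~\ref{thm:quasi_geod}), which suffices for the \v{S}varc--Milnor lemma but yields no hyperbolicity of $\Gamma$. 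Consequently your assertion that every infinite-order element of $\Gamma$ is loxodromic does not follow. The paper must confront the possibility of ``almost unipotent'' elements head-on, and Section~\ref{sec:exist_bi} --- producing a single bi-proximal element by playing the polynomial growth estimate of Proposition~\ref{prop:growth} against the linear growth of word geodesics --- is a genuinely necessary piece of the argument that your outline skips entirely. Your opening step is also unjustified: a flat complex disk in $\partial\Omega$ is perfectly compatible with $C^1$ regularity, and you do not explain what contradiction arises at the relative boundary of such a disk. In the paper, strict $\Cb$-convexity is a late theorem (Theorem~\ref{thm:str}) extracted from the dynamics of bi-proximal elements, not an early reduction.

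Finally, you yourself identify the classification of the rescaled limit domain as ``the main obstacle'' and note that the real-projective structural input is unavailable. That missing content is precisely what the paper supplies: Theorem~\ref{thm:blow_up} manufactures an $\SL(\Rb^2)$-action on $\Omega$ from a single bi-proximal element; Sections~\ref{sec:str} and~\ref{sec:smth} upgrade this to transitivity of $\Aut_0(\Omega)$ on $\partial\Omega$ and hence $C^\infty$ boundary; and Section~\ref{sec:complete} identifies the boundary with the Hermitian quadric by a scaling argument combined with the fact (Lemma~\ref{lem:proj_disk}) that every complex line meets $\Omega$ in a round projective disk, which kills the non-Hermitian part of the Hessian. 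As written, the proposal is a plausible roadmap whose load-bearing steps are either false as cited or left unproved.
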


\begin{remark} \
\begin{enumerate}
\item We say a set $\Omega \subset \Pb(\Cb^{d+1})$ is a \emph{projective ball} when there exists a basis of $\Cb^{d+1}$ such that with respect to this basis $\Omega = \{ [1:z_1:\dots : z_d] : \sum \abs{z_i}^2 <1 \}$. A \emph{projective disk} is a projective ball in $\Pb(\Cb^{2})$.
\item By $C^1$ boundary we mean that $\partial \Omega$ is a $C^1$ embedded submanifold of $\Pb(\Cb^{d+1})$.
\item The examples of real divisible convex sets constructed by deforming a real hyperbolic lattice and the examples constructed by Kapovich have word hyperbolic dividing groups. Then Benoist's theorem  implies that all these examples have $C^1$ boundary and so in real projective geometry there are many examples of divisible proper convex open sets with $C^1$ boundary.
\end{enumerate}
 \end{remark}

There are at least two other well known notions of convexity for sets in $\Pb(\Cb^{d+1})$. A set $\Omega$ is called \emph{linearly convex} if for all $p \in \Pb(\Cb^{d+1})\setminus \Omega$ there exists a complex hyperplane $H$ containing $p$ which does not intersect $\Omega$. An open set $\Omega$ is \emph{weakly linearly convex} if for all $p \in \partial \Omega$ there exists a complex hyperplane $H$ containing $p$ which does not intersect $\Omega$.  For an open set we have the following implications:
\begin{align*}
\Cb\text{-convex} \Rightarrow \text{linearly convex} \Rightarrow \text{weakly linear convex.}
\end{align*}
For a proof see~\cite[Theorem 3.29]{APS2004}. When $\Omega$ is an open set and $\partial \Omega$ is $C^1$, then all three definitions coincide (see for instance~\cite[Corollary 2.5.6]{APS2004}). Thus we have the following corollary of Theorem~\ref{thm:main}:

\begin{corollary}
Suppose $\Omega$ is a divisible proper weakly linearly convex open set with $C^1$ boundary. Then $\Omega$ is a projective ball.
\end{corollary}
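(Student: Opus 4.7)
The plan is essentially a one-line reduction to Theorem~\ref{thm:main}. The chain of implications displayed just before the corollary shows that $\Cb$-convexity is the strongest of the three notions (it implies linear convexity, which implies weak linear convexity). So a priori the hypothesis of the corollary is weaker than that of Theorem~\ref{thm:main}. However, the paper has already pointed out that under the additional regularity assumption of a $C^1$ boundary, all three notions coincide, citing~\cite[Corollary 2.5.6]{APS2004}.

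Therefore my proof would read: let $\Omega \subset \Pb(\Cb^{d+1})$ be a divisible proper weakly linearly convex open set with $C^1$ boundary. By~\cite[Corollary 2.5.6]{APS2004}, weak linear convexity plus $C^1$ boundary forces $\Omega$ to be $\Cb$-convex. The properness hypothesis (that $\overline{\Omega}$ contains no complex projective line) and the divisibility hypothesis (existence of a discrete group $\Gamma \leq \PSL(\Cb^{d+1})$ preserving $\Omega$ with $\Gamma \backslash \Omega$ compact) are invariant under this strengthening of the convexity notion, so $\Omega$ is a divisible proper $\Cb$-convex open set with $C^1$ boundary. Theorem~\ref{thm:main} then immediately yields that $\Omega$ is a projective ball.

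There is no real obstacle here; the work of the corollary has been entirely absorbed into the reference to~\cite{APS2004} and into Theorem~\ref{thm:main} itself. The only thing one might want to double-check for clarity is that the cited result from~\cite{APS2004} is stated for the precise notion of ``weakly linearly convex open set'' used here (i.e., the existence of a supporting complex hyperplane at each boundary point, without any further regularity on the hyperplane's dependence on the point). That is exactly the content of the cited corollary, so no additional argument is required.
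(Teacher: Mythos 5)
Your proposal is correct and matches the paper exactly: the corollary is deduced from Theorem~\ref{thm:main} precisely by invoking the equivalence of weak linear convexity, linear convexity, and $\Cb$-convexity for open sets with $C^1$ boundary from~\cite[Corollary 2.5.6]{APS2004}, after which the properness and divisibility hypotheses carry over verbatim. Nothing further is needed.
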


\subsection{Outline of proof} A powerful tool for studying divisible convex sets in real projective geometry is the Hilbert metric (see for instance~\cite{B2004} or~\cite{V1970}). Following this trend, one of the main tools in our proof is a Hilbert metric $d_\Omega$ on a linearly convex set $\Omega \subset \Pb(\Cb^{d+1})$. This metric was discovered by Dubois~\cite{D2009}. When $\Omega$ is proper this Hilbert metric will be complete and invariant under projective transformations. There are a number of other well known metrics (for example the Kobayashi metric or the Bergman metric) but the Hilbert metric has several useful properties:
\begin{enumerate}
\item if $W$ is a projective subspace then the inclusion map $W \cap \Omega \hookrightarrow \Omega$ induces an isometric embedding $(W \cap \Omega, d_{W \cap \Omega}) \hookrightarrow (\Omega, d_{\Omega})$ (see Theorem~\ref{thm:dubois}),
\item the behavior of the metric near the boundary is closely related to the geometry of the boundary (see Proposition~\ref{prop:bd_behav}), 
\item for $\varphi \in \PSL(\Cb^{d+1})$ preserving $\Omega$ it is possible to estimate the translation distance $d_{\Omega}(\varphi y, y)$ in terms of $\norm{\varphi}$.
(see Proposition~\ref{prop:trans_dist}). 
\end{enumerate}
Despite these properties the Hilbert metric does have one serious downfall: it is rarely geodesic (see Theorem~\ref{thm:not_geod}). But when the boundary is $C^1$ the Hilbert metric will be a quasi-geodesic metric (see Theorem~\ref{thm:quasi_geod}) and this is good enough for the  \v{S}varc-Milnor Lemma (see Theorem~\ref{thm:fundgeomgp}).

Section~\ref{sec:prelim} is devoted to preliminaries concerning $\Cb$-convex sets. We also briefly discuss quasi-geodesic metric spaces. In Section~\ref{sec:Hilbert}, we will re-introduce the Hilbert metric and establish the properties mentioned above. 

The proof of Theorem~\ref{thm:main} occupies the rest of the paper. In Section~\ref{sec:bi_prox_or_unip}, we will  prove that each $\gamma \in \Gamma$ is either bi-proximal or ``almost unipotent.'' In Section~\ref{sec:exist_bi}, we will prove that $\Gamma$ contains a bi-proximal element. In Section~\ref{sec:constr_auto}, we will use the existence of bi-proximal elements preserving $\Omega$  to construct many additional automorphisms of $\Omega$. In Sections~\ref{sec:str} and~\ref{sec:smth}, we will use these additional projective automorphisms to show that $\Omega$ is strictly $\Cb$-convex and that the boundary is $C^\infty$. Finally in Section~\ref{sec:complete}, these latter two conditions are exploited to prove that $\Omega$ is a projective ball using a scaling argument. 

\subsection{Prior Results} As mentioned above there is a rich theory of convex divisible sets in real projective space. For additional details about this theory see the survey articles by Benoist~\cite{B2008}, Guo~\cite{G2013}, Quint~\cite{Q2010}, and Marquis~\cite{M2013}.

There are also several related rigidity results coming from the complex analysis community. One remarkable theorem is the ball theorem of Rosay~\cite{R1979} and Wong~\cite{W1977}:

\begin{theorem}
Suppose $\Omega \subset \Cb^d$ is a bounded strongly pseudo-convex domain. If the space of holomorphic automorphisms of $\Omega$ is non-compact then $\Omega$ is bi-holomorphic to a ball.
\end{theorem}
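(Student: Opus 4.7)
The plan is to invoke the classical scaling (Pinchuk-type) method. Since $\Aut(\Omega)$ is non-compact and $\Omega$ is bounded, after extracting a subsequence there exist automorphisms $\varphi_n \in \Aut(\Omega)$ and a base point $z_0 \in \Omega$ with $\varphi_n(z_0) \to p$ for some $p \in \partial \Omega$. The strategy is to rescale the picture near $p$ so that, in the limit, one obtains a biholomorphism from $\Omega$ onto a model domain already known to be biholomorphic to the unit ball.

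Using strong pseudoconvexity, I would first change holomorphic coordinates so that $p = 0$ and $\partial \Omega$ is locally defined by
\[
\Real z_d = \abs{z_1}^2 + \cdots + \abs{z_{d-1}}^2 + o(\abs{z}^2).
\]
Let $\epsilon_n$ denote essentially the Euclidean distance from $\varphi_n(z_0)$ to $\partial \Omega$. I would then compose $\varphi_n$ with an anisotropic polynomial automorphism $\Lambda_n$ of $\Cb^d$ that dilates the tangential directions $(z_1,\dots,z_{d-1})$ by $1/\sqrt{\epsilon_n}$ and the normal direction $z_d$ by $1/\epsilon_n$, arranged so that $\Lambda_n(\varphi_n(z_0))$ equals a fixed interior base point of the Siegel half-space
\[
\Omega_\infty = \{z \in \Cb^d : \Real z_d > \abs{z_1}^2 + \cdots + \abs{z_{d-1}}^2\},
\]
which is biholomorphic to the unit ball via a Cayley transform. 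The quadratic normal form of $\partial \Omega$ is precisely what forces the rescaled domains $\Lambda_n(\Omega)$ to converge in the local Hausdorff topology to $\Omega_\infty$.

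The maps $F_n := \Lambda_n \circ \varphi_n$ would then form a normal family on $\Omega$, so a subsequence converges locally uniformly to a holomorphic map $F \colon \Omega \to \overline{\Omega_\infty}$. To conclude I would show $F$ is a biholomorphism onto $\Omega_\infty$ by (i) using Kobayashi or Carath\'eodory metric comparison estimates on strongly pseudoconvex domains to show that the complex Jacobians of $F_n$ at $z_0$ stay bounded away from zero, forcing $F$ to be non-degenerate and hence injective via a Hurwitz-type argument, and (ii) running the same rescaling argument on $\varphi_n^{-1}$ to produce an inverse to $F$. The main obstacle is controlling the degeneration of the limit: one must ensure $F(\Omega)$ lies in the interior of $\Omega_\infty$ and that $F$ is genuinely injective rather than collapsing to a lower-dimensional image. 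This is exactly where strong pseudoconvexity (non-degeneracy of the Levi form at $p$) is essential, as it both furnishes the quadratic model $\Omega_\infty$ and supplies the two-sided intrinsic-metric estimates needed to prevent the Jacobians from blowing up or vanishing.
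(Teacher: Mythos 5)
This theorem is not proved in the paper at all: it is quoted as background (the Rosay--Wong ball theorem) in the ``Prior Results'' subsection, with the proof deferred to the cited references. So there is no in-paper argument to compare against. Your sketch is the standard Pinchuk-type scaling proof, which is a legitimate (and by now classical) route to this result, though it differs from the original arguments: Wong's proof compared the Carath\'eodory and Eisenman--Kobayashi volume forms and used the fact that their ratio tending to $1$ characterizes the ball, while Rosay localized that argument to a strongly pseudoconvex orbit accumulation point. The scaling approach you describe trades those invariant-metric characterizations for an explicit limit model (the Siegel half-space) and is arguably more flexible, which is why variants of it appear in Frankel's theorem and, in spirit, in Section~\ref{sec:complete} of this paper.

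One step in your outline deserves to be called out as a genuine gap rather than a detail: to get that $F_n = \Lambda_n\circ\varphi_n$ is a normal family you need local uniform boundedness, and since $\Lambda_n$ dilates by $1/\epsilon_n$ this is not automatic from $\varphi_n(z_0)\to p$ alone. You first need the localization lemma that $\varphi_n\to p$ uniformly on compact subsets of $\Omega$ (proved using a plurisubharmonic peak function at the strongly pseudoconvex point $p$, or equivalently the completeness/blow-up of the Kobayashi metric there), and then the standard Kobayashi-distance estimates near a strongly pseudoconvex boundary point to confine $\Lambda_n(\varphi_n(K))$ to a fixed compact subset of the Siegel domain. Without that localization the rescaled images can escape and the normal-families argument does not start. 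With that ingredient added, the rest of your outline (nonvanishing Jacobian via two-sided metric estimates, Hurwitz, and running the argument on $\varphi_n^{-1}$ to invert the limit) is the correct and standard completion.
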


By \emph{strongly psuedo-convex} we mean that $\Omega$ has $C^2$ boundary and the Levi-form at each point in the boundary is positive definite. There are more general versions of the ball theorem requiring only that the boundary is strongly psuedo-convex at an orbit accumulation point. We refer the reader to the survey articles~\cite{IK1999} and~\cite{K2013} for more details. 

We should also note that Soci{\'e}-M{\'e}thou has proven a version of the ball theorem for convex sets in real projective space.

\begin{theorem}\cite{SM2002}
Suppose $\Omega \subset \Pb(\Rb^{d+1})$ is an open proper strongly convex set. If the space of projective automorphisms of $\Omega$ is non-compact then $\Omega$ is a projective ball. 
\end{theorem}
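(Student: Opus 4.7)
The plan is to run a Pinchuk-type rescaling argument adapted to real projective geometry. Because the (real) Hilbert metric $d_\Omega$ is complete and $\Aut(\Omega)$-invariant and the action of $\Aut(\Omega)$ on $(\Omega, d_\Omega)$ is proper, noncompactness of $\Aut(\Omega)$ forces the existence of a base point $p \in \Omega$ and a sequence $\varphi_n \in \Aut(\Omega)$ with $d_\Omega(p, \varphi_n p) \to \infty$; since $\overline{\Omega}$ is compact, after passing to a subsequence one obtains an orbit accumulation point $\xi = \lim \varphi_n p \in \partial \Omega$.

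Next, I would use strong convexity to pin down the shape of $\Omega$ at $\xi$. Choose an affine chart of $\Pb(\Rb^{d+1})$ and coordinates $(x_0, x') \in \Rb \times \Rb^d$ in which $\xi = 0$, $T_\xi \partial \Omega = \{x_0 = 0\}$, $\Omega \subset \{x_0 > 0\}$, and
\begin{align*}
\partial \Omega = \{ x_0 = Q(x') + o(\abs{x'}^2) \}
\end{align*}
locally, with $Q$ a positive definite quadratic form. The anisotropic dilations $A_\lambda(x_0, x') = (\lambda^{-2} x_0, \lambda^{-1} x')$ are projective transformations, and as $\lambda \to 0^+$ the sets $A_\lambda(\Omega)$ converge in the Hausdorff topology of $\Pb(\Rb^{d+1})$ to the paraboloid $P = \{ x_0 > Q(x') \}$, which is projectively equivalent to the unit ball.

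The decisive step is to choose $\lambda_n \to 0^+$ so that the composition $\psi_n := A_{\lambda_n} \circ \varphi_n$ remains in a compact subset of $\PSL(\Rb^{d+1})$. A natural normalization is to impose that $\psi_n(p)$ lands at a fixed preassigned point of $P$, say $(1, 0)$; since $d_\Omega(p, \varphi_n p) \to \infty$ forces $\varphi_n(p)$ to approach $\xi$ transversally and the $C^2$ paraboloid expansion of $\partial \Omega$ near $\xi$ matches the scaling $A_\lambda$, such a sequence $\lambda_n$ exists. Modulo the compact stabilizer of $(1,0)$ in $\Aut(P)$, the sequence $\psi_n$ is then bounded in $\PSL(\Rb^{d+1})$, and a subsequence converges to some $\psi \in \PSL(\Rb^{d+1})$. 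Since $\varphi_n \in \Aut(\Omega)$, we have $\psi_n(\Omega) = A_{\lambda_n}(\Omega) \to P$ in the Hausdorff topology, so $\psi(\Omega) = P$ and $\Omega = \psi^{-1}(P)$ is a projective ball.

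The main obstacle is the normalization step: arranging $\psi_n(p) \to (1, 0)$ while ruling out escape of $\psi_n$ to infinity in $\PSL(\Rb^{d+1})$. This is where strong convexity (as opposed to mere strict convexity) is essential: the $C^2$ quadratic approximation at $\xi$ precisely matches the self-similar scaling of the paraboloid under $A_\lambda$, so the rescaled automorphisms see a stable limiting geometry and the compactness argument closes. Once a convergent subsequence $\psi_n \to \psi$ has been extracted, the remainder is formal — continuity of Hausdorff limits under bounded sequences in $\PSL$ yields $\psi(\Omega) = P$ and hence the conclusion.
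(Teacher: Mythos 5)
The paper does not actually prove this statement---it is quoted from Soci\'e-M\'ethou's work \cite{SM2002} as background---so there is no internal proof to compare against; I will assess your rescaling argument on its own terms. The overall strategy (blow up at an orbit accumulation point, use strong convexity to identify the rescaled limit as a paraboloid, and transport the limit back by a convergent sequence of projective maps) is the standard and correct route to this theorem, and it is also the spirit of the scaling argument the paper itself uses in Section~\ref{sec:complete}. However, as written there are two genuine gaps, and you have located but not closed the more serious one. First, normalizing so that $\psi_n(p)$ converges to a fixed interior point of $P$ does \emph{not} bound $\psi_n$ in $\PSL(\Rb^{d+1})$: a sequence of projective transformations can degenerate (converge to a singular endomorphism projectively) while the image of one chosen point stays in a compact subset of the limit domain. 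The statement you need is precisely Benz\'ecri's theorem that $\PSL(\Rb^{d+1})$ acts \emph{properly} on the space of pointed properly convex open sets, applied to $\psi_n\cdot(\Omega,p)\rightarrow (P,(1,0))$ with $(\Omega,p)$ fixed; alternatively one can argue as in Proposition~\ref{prop:proper} of the paper, showing that a degenerate limit would fail to be injective on $\Omega$. This is the real content of the proof, and saying that ``the quadratic approximation matches the scaling'' only gives convergence of the domains $A_{\lambda_n}(\Omega)\rightarrow P$, not precompactness of the maps.

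Second, your normalization presumes that $\varphi_n(p)$ approaches $\xi$ ``transversally,'' but an arbitrary divergent orbit need not do so: writing $\varphi_n(p)=(x_0^{(n)},x'^{(n)})$ and taking $\lambda_n=(x_0^{(n)})^{1/2}$, the rescaled points $(1,x'^{(n)}/\lambda_n)$ are bounded (since $x_0^{(n)}\gtrsim Q(x'^{(n)})$) but may converge to a point of $\partial P$ rather than of $P$, in which case no choice of $\lambda_n$ places the limit in the interior and the compactness argument does not close. This is repaired either by passing to the pointed-domain formulation (Benz\'ecri compactness applied directly to $(\Omega,\varphi_n p)$, after which one must still identify the abstract limit with the paraboloid via the $C^2$ expansion), or by a Hilbert-metric estimate showing one may replace $\varphi_n(p)$ by nearby orbit points with controlled ``height-to-tangential-displacement'' ratio. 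Note that the paper's own scaling argument in Section~\ref{sec:complete} avoids both issues entirely because there the dilations $a_t$ are themselves automorphisms of $\Omega$ (supplied by Theorem~\ref{thm:blow_up}), so no limit of domains needs to be taken; in the setting of the present statement no such one-parameter group is available a priori, which is exactly why the compactness input is unavoidable.
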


By \emph{strongly convex set} we mean that $\Omega$ has $C^2$ boundary and the Hessian at each point in the boundary is positive definite. In the real projective world, Benoist showed that rigidity still holds if the boundary regularity is slightly relaxed but at the cost of assuming the existence of a dividing group.

\begin{theorem}\cite[Theorem 1.3]{B2004}
Suppose $\Omega \subset \Pb(\Rb^{d+1})$ is an open, proper, divisible convex set. If $\partial \Omega$ is $C^{1+\alpha}$ for all $\alpha \in [0,1)$ then $\Omega$ is a projective ball.
\end{theorem}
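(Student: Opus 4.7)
The plan is to combine Benoist's strict convexity theorem (quoted earlier in this excerpt) with a spectral analysis of the biproximal elements of the dividing group $\Gamma$. Since $\partial \Omega$ is $C^1$, that theorem gives that $\Omega$ is strictly convex and $\Gamma$ is word hyperbolic, so every infinite-order $\gamma \in \Gamma$ has unique attracting and repelling fixed points $x_\gamma^{\pm} \in \partial \Omega$ and a lift of $\gamma$ to $\SL(\Rb^{d+1})$ is biproximal with real eigenvalues $\lambda_1(\gamma) > \lambda_2(\gamma) \geq \dots \geq \lambda_d(\gamma) > \lambda_{d+1}(\gamma)$.

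Next I would run a local analysis of $\partial \Omega$ at $x_\gamma^+$. Diagonalize a lift of $\gamma$ and choose affine coordinates in which $x_\gamma^+$ is the origin, the tangent hyperplane is $\{v_{d+1}=0\}$ (the unique $\gamma$-invariant hyperplane through $x_\gamma^+$ that avoids $x_\gamma^-$), and $\gamma$ acts linearly as $(v_2,\dots,v_{d+1}) \mapsto (\mu_2 v_2,\dots,\mu_{d+1} v_{d+1})$ with $\mu_i = \lambda_i(\gamma)/\lambda_1(\gamma) < 1$. Writing $\partial\Omega$ locally as the graph of a convex function $v_{d+1} = f(v_2,\dots,v_d)$ with $f(0)=0$ and $df(0)=0$, invariance under $\gamma$ gives the functional equation
\begin{equation*}
f(\mu_2 v_2,\dots,\mu_d v_d) = \mu_{d+1}\, f(v_2,\dots,v_d).
\end{equation*}
Iterating this along the slowest-contracting axis pins down a sharp Hölder exponent of $f$ at $0$; along the $e_2$-axis one gets $f(t e_2) \asymp \abs{t}^{1+\alpha_\gamma}$ with $1+\alpha_\gamma = \log(\lambda_1/\lambda_{d+1})/\log(\lambda_1/\lambda_2)$. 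The $C^{1+\alpha}$ hypothesis for every $\alpha < 1$ then forces $\alpha_\gamma \geq 1$, equivalently $\lambda_2(\gamma)^2 \geq \lambda_1(\gamma)\lambda_{d+1}(\gamma)$; applying the same argument to $\gamma^{-1}$ at $x_\gamma^-$ yields $\lambda_d(\gamma)^2 \leq \lambda_1(\gamma)\lambda_{d+1}(\gamma)$. Both constraints hold for every infinite-order $\gamma \in \Gamma$ and confine the Jordan projection of $\Gamma$ to a proper subcone of the positive Weyl chamber of $\SL(\Rb^{d+1})$.

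I would finish via a Zariski-closure argument. Since the Jordan projections of a Zariski dense subgroup of a semisimple group are dense in the positive Weyl chamber (Benoist), the Zariski closure $G$ of $\Gamma$ must be a proper reductive subgroup of $\SL(\Rb^{d+1})$. Because $G$ still preserves $\Omega$ and contains $\Gamma$, the classification of Zariski closures of dividing groups forces $G$ to be conjugate to $\SO(1,d) \hookrightarrow \SL(\Rb^{d+1})$. As $\SO(1,d)$ acts transitively on the interior of each invariant ellipsoid, $\Omega$ must be a projective ball.

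The main obstacle is the spectral-to-regularity dictionary in the middle step. Turning the functional equation into a \emph{sharp} Hölder exponent requires (i) a matching lower bound on $f$ from strict convexity of $\Omega$, (ii) a careful treatment of multiplicity when $\lambda_2(\gamma) = \lambda_3(\gamma) = \dots$, so that one analyses the full eigenspace rather than a single axis, and (iii) a propagation argument showing that the regularity obstruction at the dense set of attracting fixed points in $\partial\Omega$ governs the global regularity. The final Zariski-closure step is also non-trivial and relies on Benoist's classification theorems for the Zariski closures of groups dividing convex sets.
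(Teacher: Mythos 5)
This statement is quoted from Benoist's paper and is not proved in the text under review, so there is no in-paper argument to compare against; your proposal has to stand on its own. Its overall architecture (spectral constraints at the fixed points of biproximal elements, then a Zariski-density/classification argument) is the right one, but there is a genuine gap in the middle step. You extract the H\"older obstruction along the \emph{slowest}-contracting tangent direction $e_2$. Writing $\mu_j=\lambda_j/\lambda_1$, the functional equation gives $f(te_j)\asymp\abs{t}^{\beta_j}$ with $\beta_j=\log\mu_{d+1}/\log\mu_j$, and these exponents \emph{decrease} in $j$; the graph is most regular along $e_2$ and least regular along $e_d$. The condition $C^{1,\alpha}$ requires $\min_j\beta_j=\beta_d\geq 1+\alpha$, so the binding constraint is $\lambda_d(\gamma)^2\geq\lambda_1(\gamma)\lambda_{d+1}(\gamma)$, not the weaker $\lambda_2^2\geq\lambda_1\lambda_{d+1}$ you derive. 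Your pair of inequalities $\lambda_d^2\leq\lambda_1\lambda_{d+1}\leq\lambda_2^2$ cuts out a convex subcone of the Weyl chamber that still has nonempty interior as soon as $d\geq 3$ (e.g.\ in $\SL(\Rb^4)$ take $\log\lambda=(a_1,1,0,a_4)$ with $a_1+a_4=-1$), so no contradiction with Zariski density follows. With the corrected direction, $\gamma$ gives $\lambda_d^2\geq\lambda_1\lambda_{d+1}$ and $\gamma^{-1}$ gives $\lambda_2^2\leq\lambda_1\lambda_{d+1}$; together with $\lambda_2\geq\lambda_d$ this forces $\lambda_2=\cdots=\lambda_d$ and $\lambda_2^2=\lambda_1\lambda_{d+1}$, i.e.\ the Jordan projection of every element lies on the single $\SO(1,d)$-ray, which is what you actually need.

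Two further points. First, the theorem you invoke to finish is misstated: Benoist does not prove that Jordan projections of a Zariski-dense subgroup are dense in the Weyl chamber, only that their limit cone is convex with nonempty interior. Confinement to a ``proper subcone'' is therefore not a contradiction; confinement to a cone with empty interior (the single ray above) is, so the corrected spectral constraint is essential rather than optional. Granting that, the conclusion that a non-Zariski-dense dividing group of a strictly convex set must have closure conjugate to $\SO(1,d)$, whence $\Omega$ is an ellipsoid, is a legitimate (and genuinely nontrivial) appeal to Benoist's classification. Second, your obstacle (iii) about propagating regularity from the dense set of fixed points is not needed in this direction of the argument: you are assuming global $C^{1,\alpha}$ regularity and reading off a constraint at each $x_\gamma^{\pm}$ separately; propagation only enters when proving regularity \emph{from} spectral data.
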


Thus for convex sets in real projective space we see that the theory is rigid for sets with $C^2$ boundary but rich in examples for sets with $C^1$ boundary. 

Another remarkable theorem is due to Frankel.

\begin{theorem}\cite{F1989}
Suppose $\Omega \subset \Cb^d$ is a bounded convex (in the usual sense) domain and there exists a discrete group $\Gamma$ of holomorphic automorphisms of $\Omega$ such that $\Gamma \backslash \Omega$ is compact. Then $\Omega$ is a symmetric domain.
\end{theorem}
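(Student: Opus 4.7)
The plan is to run a scaling/rescaling argument against the invariant Kobayashi metric $k_\Omega$, which on a bounded convex domain is complete, equals the Carath\'eodory metric by Lempert's theorem, and is invariant under $\Aut(\Omega)$. Since $\Gamma \leq \Aut(\Omega)$ is a discrete group of $k_\Omega$-isometries acting cocompactly, fix once and for all a compact fundamental domain $K \subset \Omega$ containing a basepoint $q_0$.

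The heart of the argument is a construction producing new holomorphic automorphisms of $\Omega$ from the behavior of the $\Gamma$-orbit near the boundary. Given $p \in \partial \Omega$ and a sequence $p_n \to p$ in $\Omega$, cocompactness yields $\gamma_n \in \Gamma$ with $\gamma_n(p_n) \in K$. Applying Montel's theorem to $\{\gamma_n\}$ yields a subsequential holomorphic limit $f : \Omega \to \overline{\Omega}$, but the key issue is nondegeneracy. I would handle this by pre-composing with a sequence of complex affine maps $A_n$ of $\Cb^d$ chosen, using the convex geometry at $p$, so that $A_n^{-1}(\Omega)$ Hausdorff-converges to a model convex domain $\widehat\Omega$ and $A_n^{-1}(p_n)$ converges to a basepoint $\widehat p \in \widehat\Omega$. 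Then $\gamma_n \circ A_n$ sends a neighborhood of $\widehat p$ in $\widehat\Omega$ into $\Omega$, and Montel now yields a nondegenerate holomorphic embedding $\widehat f : \widehat\Omega \hookrightarrow \Omega$. Varying $p$ and reversing the roles of source and target, one chains together such limits to build biholomorphisms between blow-up models of $\Omega$ at different boundary points, and ultimately assembles genuine holomorphic automorphisms of $\Omega$ itself.

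This construction delivers a connected Lie subgroup $G \leq \Aut(\Omega)$ whose orbit $G \cdot q_0$ has positive dimension at $q_0$ in every direction, since by cocompactness every region of $\partial\Omega$ contributes such a limit. Hence $G \cdot q_0$ is open in $\Omega$; because $(\Omega, k_\Omega)$ is complete and $G$ acts by isometries, the orbit is also closed, so $G$ acts transitively and $\Omega$ is a bounded homogeneous convex domain. The proof concludes by invoking the structure theory of such domains: by the Vinberg-Koszul theory every bounded homogeneous domain is biholomorphic to a Siegel domain over a homogeneous cone, and in the presence of the cocompact $\Gamma$-action one shows that the convexity of $\Omega$ together with the existence of a finite-volume quotient forces this cone to be self-dual; by the theorem of Koecher and Vinberg, self-dual homogeneous convex cones are exactly the cones of squares in Euclidean Jordan algebras, and the associated Siegel domain is symmetric.

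The principal obstacle is the nondegeneracy of the rescaled limit in the second step: a priori, $\gamma_n \circ A_n$ could limit to a constant or to a map with image in a proper complex-affine slice of $\Omega$. This is where convexity is essential, and the control comes from sharp boundary asymptotics of $k_\Omega$ on convex domains, namely estimates of the form $k_\Omega(q_0, p_n) = \tfrac{1}{2}\log \tfrac{1}{\mathrm{dist}(p_n, \partial\Omega)} + O(1)$ along generic normal sequences. Calibrating the affine stretching factors in $A_n$ against this asymptotic is what keeps $\widehat f$ nondegenerate and allows the orbit-closure argument above to close.
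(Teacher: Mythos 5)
This statement is quoted in the paper from Frankel's Acta paper [F1989] and is not proved there, so there is no internal proof to compare your attempt against; I can only measure it against Frankel's actual argument. Your overall strategy --- affine rescaling at the boundary calibrated by the Kobayashi metric, normal-families limits pulled back to a compact fundamental set via the cocompact $\Gamma$-action, and a resulting homogeneity statement --- is indeed the skeleton of Frankel's proof, and you correctly identify nondegeneracy of the rescaled limits as a crux. But two steps are genuinely missing rather than merely compressed.

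First, the passage from ``nondegenerate limit embeddings $\widehat f:\widehat\Omega\hookrightarrow\Omega$ exist at every boundary point'' to ``there is a connected Lie subgroup $G\leq\Aut(\Omega)$ with an open orbit'' is essentially the entire content of Frankel's paper, and here it is asserted, not argued. The limits you produce are a priori embeddings of blow-up models into $\Omega$, not automorphisms of $\Omega$; to promote them you must show that the rescaled domain is equivalent to $\Omega$ itself and, more importantly, that the construction yields a non-discrete family of automorphisms (e.g.\ one-parameter semigroups) rather than maps already lying in the discrete group $\Gamma$. Nothing in the proposal rules out the latter, so no connected group $G$ of positive dimension has actually been produced. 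Second, the endgame is wrong as stated: by Vinberg--Gindikin--Pyatetskii-Shapiro a bounded homogeneous domain is a Siegel domain of type II over a homogeneous cone, and non-symmetric examples exist from complex dimension $4$ onward, so the reduction to a statement about self-dual cones does not apply in general, and ``the cocompact action forces the cone to be self-dual'' is an assertion, not an argument. The standard mechanism is unimodularity: a Lie group containing a cocompact discrete subgroup is unimodular, while by Hano's theorem a connected Lie group acting transitively and effectively on a bounded homogeneous non-symmetric domain is never unimodular; hence homogeneity plus the lattice forces symmetry. With that substitution the final step closes, but without the first gap filled the proposal does not constitute a proof.
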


It is clear that if $\Omega \subset \Pb(\Cb^{d+1})$ is convex (in the usual sense) in some affine chart then $\Omega$ will be $\Cb$-convex. Moreover there exists bounded $\Cb$-convex sets in $\Cb^d$ which are not bi-holomorphic to a convex set~\cite{NPZ2008}.  In particular the main theorem of this paper weakens the hypothesis of Frankel's result in one direction while strengthening the hypothesis in two directions: assuming additional boundary regularity and assuming the dividing group acts projectively instead of holomorphically. We should also mention that $\Cb$-convexity is invariant under projective transformations while ordinary convexity is not. Thus for proving rigidity results about the group of projective automorphisms of a domain it seems more natural to look at $\Cb$-convex domains.

\subsection*{Acknowledgements}

I would like to thank David Barrett, Daniel Burns, Gopal Prasad, and my advisor Ralf Spatzier for many helpful conversations. This material is based upon work supported by the National Science Foundation under Grant Number NSF 1045119.

\section{Preliminaries}\label{sec:prelim}

\subsection{Notations} Suppose $V$ is a complex vector space. If $v \in V$ we will let $[v] \in \Pb(V)$ denote the equivalence class of $v$. Similarly, given a subspace $W \leq V$ we will let $[W]$ be the image of $W$ in $\Pb(V)$. Finally, given $\psi \in \SL(V)$ we will let $[\psi] \in \PSL(V)$ denote the equivalence class of $\psi$. 

If $\norm{ \cdot }$ is a norm on $V$ then we will also let  $\norm{ \cdot}$ denote the induced norm on $V^*$ and the operator norm on $\End(V)$.

Given an open set $\Omega \subset \Pb(V)$ we will let $\Aut(\Omega) \leq \PSL(V)$ denote the group of projective transformations $\varphi \in \PSL(V)$ such that $\varphi(\Omega)=\Omega$. We will also let $\Aut_0(\Omega)$ denote the connected component of $\Aut(\Omega)$. Notice that $\Aut_0(\Omega)$ is a closed connected subgroup of $\PSL(V)$ and hence $\Aut_0(\Omega)$  is a Lie subgroup of $\PSL(V)$.

\subsection{Complex tangent hyperplanes} Suppose $\Omega \subset \Pb(\Cb^{d+1})$ is an open set, then we say a complex hyperplane $H$ is tangent to $\Omega$ at $p \in \partial \Omega$ if $p \in H$ and $\Omega \cap H = \emptyset$. If $H$ is tangent to some point $p \in \partial \Omega$, then we say $H$ is a \emph{complex tangent hyperplane of $\Omega$}. 

If $\Omega$ is linearly convex open set, then every point $p \in \partial \Omega$ is contained in at least one complex tangent hyperplane. If, in addition, $\partial \Omega$ is $C^1$ there is an unique complex tangent hyperplane through each boundary point. In this case, if $x \in \partial \Omega$ we will denote the unique complex tangent hyperplane at $x$ by $T_x^{\Cb} \partial \Omega$. Finally we have the following observation 

\begin{observation}
Suppose $\Omega$ is a linearly convex open set with $C^1$ boundary. If $P$ is a projective linear subspace such that $x \in P \cap \partial \Omega$ and $P \cap \Omega = \emptyset$ then $P \subset T_x^{\Cb} \partial \Omega$.
\end{observation}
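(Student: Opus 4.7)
The plan is to reduce to the case where $P$ is a complex projective line and then perform a local first-order calculation at $x$ using a $C^1$ defining function. It suffices to prove that every complex projective line $L \subset P$ containing $x$ is contained in $T_x^{\Cb}\partial\Omega$: indeed, for any $y \in P$ the complex projective line through $x$ and $y$ lies in $P$, so once this is shown we get $y \in T_x^{\Cb}\partial\Omega$, and hence $P \subset T_x^{\Cb}\partial\Omega$.

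So fix a complex projective line $L \subset P$ through $x$; by hypothesis $L \cap \Omega \subset P \cap \Omega = \emptyset$. I would choose an affine chart containing $x$, identify it with $\Cb^d$, and place $x$ at the origin, so that $L \cap \Cb^d = \Cb v$ for some nonzero $v \in \Cb^d$. Since $\partial\Omega$ is $C^1$, there is, near $0$, a real-valued $C^1$ function $\rho$ with $\Omega = \{\rho < 0\}$ and $d\rho(0) \neq 0$. The real tangent space to $\partial\Omega$ at $0$ is $\ker d\rho(0)$, and its maximal complex subspace, which in this chart is $T_x^{\Cb}\partial\Omega$, is precisely $\{w \in \Cb^d : d\rho(0)(w) = 0 = d\rho(0)(iw)\}$.

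Now suppose for contradiction that $v \notin T_x^{\Cb}\partial\Omega$. Then at least one of $d\rho(0)(v)$, $d\rho(0)(iv)$ is nonzero; by replacing $v$ with $iv$ if necessary I may assume $d\rho(0)(v) \neq 0$. The $C^1$ Taylor expansion then gives
\[
\rho(tv) = t\,d\rho(0)(v) + o(\abs{t})
\]
for real $t$ near $0$, so $\rho(tv)$ changes sign along the real line $\Rb v \subset L$ at $0$. Hence $L$ meets $\Omega$, contradicting $L \cap \Omega = \emptyset$.

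The main obstacle is essentially bookkeeping, namely setting up the correct identification of $T_x^{\Cb}\partial\Omega$ in the local chart as the maximal complex subspace of $\ker d\rho(0)$. Beyond that the argument is a one-line first-order calculation; the linear convexity of $\Omega$ enters only through the existence and uniqueness of the complex tangent hyperplane at $x$, which is guaranteed by the paragraph preceding the observation.
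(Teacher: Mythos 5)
Your proof is correct. The paper states this as an Observation with no proof at all, so there is nothing to compare against except the implicit reasoning, and your first-order transversality argument (reduce to complex lines through $x$, then note that a line $L = x + \Cb v$ with $d\rho(x)(v) \neq 0$ or $d\rho(x)(iv) \neq 0$ must cross into $\{\rho < 0\}$) is exactly the natural way to make that reasoning explicit; it is also the same computation the author uses later when checking that lines meeting $\Omega$ hit $\partial\Omega$ transversally. The only step you defer as ``bookkeeping'' --- that $T_x^{\Cb}\partial\Omega$ is the maximal complex subspace of $\ker d\rho(x)$ --- deserves one sentence: the unique complex tangent hyperplane $H$ satisfies $H \cap \Omega = \emptyset$, so by the very same first-order expansion $H - x \subset \ker d\rho(x)$, and a complex hyperplane contained in the real tangent hyperplane must equal its maximal complex subspace by dimension count. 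With that sentence added the argument is complete and self-contained.
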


\subsection{The complex dual:} Given a subset $\Omega \subset \Pb(V)$, the \emph{complex dual $\Omega^*$ of $\Omega$} is the set
\begin{align*}
\Omega^* = \{ f \in \Pb(V^*) : f(x) \neq 0 \text{ for all } x \in \Omega\}.
\end{align*}
When $\Omega$ is an open $\Cb$-convex set then $\Omega^*$ will be $\Cb$-convex as well (see for instance~\cite[Theorem 2.3.9]{APS2004}). So if $\Omega$ is an open $\Cb$-convex set then $\Omega^*$ will be a compact $\Cb$-convex set. 

In the real projective case, if $\Cc \subset \Pb(\Rb^{d+1})$ is an open proper convex set then $\Cc^*$ will have non-empty interior. In the complex case, the story is more complicated but we do have the following.

\begin{proposition}
\label{prop:zar_den}\cite[Proposition 2.3.10]{APS2004}
If $\Omega \subset \Pb(V)$ is a proper $\Cb$-convex open set, then $\Omega^*$ is not contained in a hyperplane.
\end{proposition}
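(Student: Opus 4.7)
The plan is to argue by contradiction: assume $\Omega^*$ is contained in some hyperplane $H_0 \subset \Pb(V^*)$, and derive that $\overline{\Omega}$ contains a complex projective line, contradicting properness.

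First I translate the hypothesis via duality. Any hyperplane of $\Pb(V^*)$ has the form $H_0 = \{[f] \in \Pb(V^*) : f(v_0) = 0\}$ for some nonzero $v_0 \in V$, so setting $x_0 := [v_0] \in \Pb(V)$, the inclusion $\Omega^* \subset H_0$ becomes the purely geometric statement: every complex hyperplane of $\Pb(V)$ that avoids $\Omega$ passes through $x_0$. Two quick observations follow. First, $\Omega^*$ is nonempty in the nontrivial case $\Omega \neq \emptyset$, since properness forces $\Omega \neq \Pb(V)$ and $\Cb$-convexity of an open set implies linear convexity, giving a hyperplane through any exterior point that avoids $\Omega$. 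Second, $x_0 \notin \Omega$, because otherwise no element of $\Omega^*$ could contain $x_0$.

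The key step is the following consequence of the translated hypothesis: for every $p \in \Pb(V) \setminus \overline{\Omega}$ with $p \neq x_0$, the complex projective line $L_{p,x_0}$ joining $p$ and $x_0$ is disjoint from $\Omega$. Indeed $p \notin \Omega$, so linear convexity produces a complex hyperplane $H \ni p$ with $H \cap \Omega = \emptyset$; by the translated hypothesis $x_0 \in H$ as well, so $H$ contains the line $L_{p,x_0}$ spanned by $p$ and $x_0$, and hence $L_{p,x_0} \cap \Omega \subset H \cap \Omega = \emptyset$. To finish, pick any $q \in \Omega$. Since $x_0 \notin \Omega$ we have $q \neq x_0$, so the line $L_{q,x_0}$ is well-defined and meets $\Omega$ at $q$. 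Applying the contrapositive of the key step to each point $p \in L_{q,x_0} \setminus \{x_0\}$ (using $L_{p,x_0} = L_{q,x_0}$) gives $p \in \overline{\Omega}$, and then closedness of $\overline{\Omega}$ forces the entire line $L_{q,x_0}$ to lie in $\overline{\Omega}$, contradicting properness.

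The only delicate moment is setting up the dual dictionary correctly — translating $\Omega^* \subset H_0$ into the assertion that every hyperplane missing $\Omega$ passes through a fixed point $x_0 \in \Pb(V)$ — and making careful use of linear convexity at external points. No analytic input is needed beyond the implications among the convexity notions already recorded in the paper, so I would expect the proof to go through essentially as sketched without further obstacles.
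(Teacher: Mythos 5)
Your argument is correct. Note that the paper does not prove this proposition at all --- it is imported verbatim as \cite[Proposition 2.3.10]{APS2004} --- so there is no internal proof to compare against; what you have supplied is a self-contained replacement. The duality dictionary is set up correctly ($\Omega^*\subset H_0=\{[f]:f(v_0)=0\}$ is equivalent to saying every complex hyperplane missing $\Omega$ passes through $x_0=[v_0]$), and the rest uses only two facts the paper already records: that an open $\Cb$-convex set is linearly convex, and that properness forbids $\overline{\Omega}$ from containing a complex projective line. The contrapositive step is sound: any $p\neq x_0$ on the line $L_{q,x_0}$ through a point $q\in\Omega$ must lie in $\overline{\Omega}$, since otherwise the hyperplane through $p$ avoiding $\Omega$ would contain $x_0$ and hence all of $L_{q,x_0}$, contradicting $q\in L_{q,x_0}\cap\Omega$; closedness then forces $L_{q,x_0}\subset\overline{\Omega}$, which is impossible. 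The only cosmetic remark is that restricting the key step to $p\notin\overline{\Omega}$ is unnecessary --- linear convexity applies to every $p\notin\Omega$ --- but this does not affect the argument, and the trivial case $\Omega=\emptyset$ (where $\Omega^*=\Pb(V^*)$) is harmless.
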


\subsection{Quasi-geodesic metric spaces} As mentioned in the introduction, the proof of Theorem~\ref{thm:main} makes use of a Hilbert metric defined for linearly convex sets. In Section~\ref{sec:Hilbert} we will show that $(\Omega,d_{\Omega})$ is a quasi-geodesic metric space when $\Omega$ satisfies the hypothesis of Theorem~\ref{thm:main}. In this section we first recall the definitions of quasi-geodesics and quasi-geodesic metric spaces. Then we will state an important property of such spaces.

If $(X,d_X)$ and $(Y,d_Y)$ are metric spaces, a map $f: X \rightarrow Y$ is called a \emph{$(A,B)$-quasi isometric embedding} if 
\begin{align*}
 \frac{1}{A}d_X(x_1,x_2) -B \leq d_Y(f(x_1),f(x_2)) \leq A d_X(x_1,x_2) + B
\end{align*}
for all $x_1,x_2 \in X$. An embedding becomes an \emph{isomorphism} when there exists some $R>0$ such that $Y$ is contained in the $R$-neighborhood of $f(X)$.

The set of integers $\Zb$ has a natural metric: $d_{\Zb}(i,j) = \abs{i-j}$ and a $(A,B)$-quasi isometric embedding of $\{1,\dots,N\}$ is called a \emph{$(A,B)$-quasi geodesic segment}. A metric space $(X,d)$ is called \emph{$(A,B)$-quasi-geodesic} if for all $x,y \in X$ there exists a $(A,B)$-quasi-geodesic segment whose image contains $x$ and $y$. If $(X,d)$ is $(A,B)$-quasi geodesic for some $A$ and $B$, then $(X,d)$ is called a \emph{quasi-geodesic metric space}.

We now observe that the \v{S}varc-Milnor Lemma is true for quasi-geodesic metric spaces. More precisely: given a finitely generated group $\Gamma$ and a set of generators $S=\{s_1, \dots, s_k\}$ define the word metric $d_S$ on $\Gamma$ by 
\begin{align*}
d_S(\gamma_1,\gamma_2) = \inf \{ N : \gamma_2^{-1}\gamma_1 = s_{i_1} \dots s_{i_N} \},
\end{align*}
we then have the following:

\begin{theorem}
\label{thm:fundgeomgp}
Suppose $(X,d)$ is a proper quasi-geodesic metric space and $\Gamma$ is a group acting on $(X,d)$ by isometries. If the action is properly discontinuous and cocompact then $\Gamma$ is finitely generated. Moreover, if $S \subset \Gamma$ is a finite generating set and $x_0 \in X$ then the map $\gamma \in \Gamma \rightarrow \gamma \cdot x_0 \in X$ is a quasi-isometry of $(\Gamma,d_S)$ and $(X,d)$.
\end{theorem}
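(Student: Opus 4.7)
The plan is to adapt the classical proof of the \v{S}varc-Milnor Lemma, as presented for instance in Bridson-Haefliger, by substituting $(A,B)$-quasi-geodesic segments for geodesics wherever they appear. The key observation is that the geodesic version of the proof uses geodesics only to produce a finite chain of points between two prescribed endpoints with uniformly bounded consecutive spacing, and an $(A,B)$-quasi-geodesic furnishes exactly such a chain (with spacing at most $A+B$).

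First, using cocompactness, I would choose $R > 0$ so that $\Gamma \cdot \overline{B}(x_0, R) = X$, fix constants $A, B$ for which $(X, d)$ is $(A,B)$-quasi-geodesic, set $D = 2R + A + B$, and define
\[
S_0 = \{ \gamma \in \Gamma \setminus \{e\} : d(x_0, \gamma \cdot x_0) \leq D \}.
\]
Since $X$ is proper, $\overline{B}(x_0, D)$ is compact, and proper discontinuity of the action then forces $S_0$ to be finite.

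Next, I would show $S_0$ generates $\Gamma$. Given $\gamma \in \Gamma$, pick an $(A,B)$-quasi-geodesic segment $f : \{1, \ldots, N\} \to X$ with $f(1) = x_0$ and $f(N) = \gamma \cdot x_0$, and for each $i$ select $\gamma_i \in \Gamma$ with $d(\gamma_i \cdot x_0, f(i)) \leq R$, normalized so that $\gamma_1 = e$ and $\gamma_N = \gamma$. Since consecutive points of $f$ satisfy $d(f(i), f(i+1)) \leq A + B$, the triangle inequality yields
\[
d(x_0, \gamma_i^{-1} \gamma_{i+1} \cdot x_0) = d(\gamma_i \cdot x_0, \gamma_{i+1} \cdot x_0) \leq 2R + A + B = D,
\]
so each $\gamma_i^{-1} \gamma_{i+1}$ lies in $S_0 \cup \{e\}$. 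The telescoping identity $\gamma = (\gamma_1^{-1} \gamma_2)(\gamma_2^{-1} \gamma_3) \cdots (\gamma_{N-1}^{-1} \gamma_N)$ then expresses $\gamma$ as a word of length at most $N-1$ in $S_0$, proving finite generation and simultaneously giving the bound $d_{S_0}(e, \gamma) \leq N - 1$.

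For the quasi-isometry statement, I would establish it first for $S = S_0$ and then invoke the standard fact that any two word metrics on a finitely generated group arising from finite generating sets are bi-Lipschitz equivalent. The upper bound $d(x_0, \gamma \cdot x_0) \leq C \cdot d_{S_0}(e, \gamma)$ with $C = \max_{s \in S_0} d(x_0, s \cdot x_0)$ is immediate from the triangle inequality. The lower bound comes from the construction above: since $f$ is $(A,B)$-quasi-isometric, $N - 1 \leq A \cdot d(x_0, \gamma \cdot x_0) + AB$, and hence $d_{S_0}(e, \gamma) \leq A \cdot d(x_0, \gamma \cdot x_0) + AB$. Coarse surjectivity of the orbit map is immediate from cocompactness, since every point of $X$ lies within distance $R$ of some $\gamma \cdot x_0$. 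There is no substantive obstacle, only the bookkeeping required to carry the additive constant $B$ through an argument originally written for geodesic spaces.
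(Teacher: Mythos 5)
Your proposal is correct and is exactly what the paper intends: the paper's proof of this theorem consists of the single remark that the geodesic-space argument from de la Harpe extends to quasi-geodesic spaces essentially verbatim, and your write-up is precisely that extension, with the quasi-geodesic segment supplying the chain of points with spacing at most $A+B$ in place of a geodesic. The only cosmetic point is that, under the paper's definition, the quasi-geodesic segment is merely required to contain $x_0$ and $\gamma\cdot x_0$ in its image rather than as endpoints, so one should restrict and reindex the segment accordingly before telescoping.
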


\begin{proof}
The proof of the theorem for geodesic metric spaces given in~\cite[Chapter IV, Theorem 23]{dlP2000}) can be extended to quasi-geodesic spaces essentially verbatim.
\end{proof}

\section{The Hilbert Metric}\label{sec:Hilbert}

In this section we recall a metric introduced by Dubois~\cite{D2009} and prove some basic results. For a proper open linearly convex set $\Omega \subset \Pb(V)$ define the \emph{Hilbert metric} $d_\Omega$ on $\Omega$ by 
\begin{align}
\label{eq:hil}
d_{\Omega}(v,w) = \max_{f,g \in \Omega^*} \log \left( \frac{\abs{f(v)g(w)}}{\abs{f(w)g(v)}} \right).
\end{align}
This is clearly well defined and satisfies the triangle inequality. Moreover, when $\Omega$ is proper $d_{\Omega}(v,w) = 0$ if and only if $v=w$ \cite[Lemma 2.1]{D2009}. We note that the Hilbert metric for a convex set in real projective space is usually defined in terms of the cross ratio, but it is straightforward to check that it can also be defined as in Equation~(\ref{eq:hil}).

With $v, w \in \Pb(V)$ fixed, the map $(f,g) \in \Pb(V^*) \times \Pb(V^*) \rightarrow \frac{\abs{f(v)g(w)}}{\abs{f(w)g(v)}} \in \Rb$ is an open map and so it is enough to consider $f,g \in \partial \Omega^*$:
\begin{align*}
d_{\Omega}(v,w) = \max_{f,g \in \partial\Omega^*} \log \left( \frac{\abs{f(v)g(w)}}{\abs{f(w)g(v)}} \right).
\end{align*}
Moreover Dubois proved the following

\begin{theorem}\cite[Lemma 2.1, Lemma 2.2]{D2009}\label{thm:dubois}
If $\Omega$ is a proper linearly convex open set  then $d_{\Omega}$ is a complete metric on $\Omega$. Moreover if $L \subset \Pb(V)$ is a complex projective line and $x,y \in L \cap \Omega$ then $d_{\Omega}(x,y) = d_{\Omega \cap L}(x,y)$.
\end{theorem}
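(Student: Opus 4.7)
Non-degeneracy, symmetry, and the triangle inequality are already in hand, so what remains to prove is completeness of $d_\Omega$ and the line-restriction isometry. I would treat these two claims separately.

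For completeness, I would first establish a boundary blow-up: if $(x_n) \subset \Omega$ converges in $\Pb(V)$ to some $x_\infty \in \partial \Omega$, then $d_\Omega(v, x_n) \to \infty$ for every fixed $v \in \Omega$. Linear convexity at $x_\infty$ gives a complex hyperplane $\ker f$ through $x_\infty$ missing $\Omega$, so $f \in \Omega^*$ and $f(x_n) \to 0$. By Proposition~\ref{prop:zar_den}, $\Omega^*$ is not contained in the hyperplane $\{ g : g(x_\infty) = 0 \}$ of $\Pb(V^*)$, so I can also choose $g \in \Omega^*$ with $g(x_\infty) \ne 0$. Inserting this pair into the defining maximum yields $d_\Omega(v, x_n) \geq \log \frac{\abs{f(v) g(x_n)}}{\abs{f(x_n) g(v)}} \to +\infty$. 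Combined with the evident continuity of $w \mapsto d_\Omega(w, v)$ (the integrand is jointly continuous on the compact set $\Omega^*$ and vanishes at $w = v$), this implies that any $d_\Omega$-Cauchy sequence is contained in a Euclidean-compact subset of $\Omega$, converges along a subsequence in the Euclidean topology to some $x \in \Omega$, and then (by continuity and the Cauchy property) converges to $x$ in $d_\Omega$.

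For the line isometry, let $\hat L \leq V$ be the $2$-plane with $L = \Pb(\hat L)$ and let $r : f \mapsto f|_{\hat L}$ denote restriction. Since $f$ does not vanish on $\Omega$, the map $r$ sends $\Omega^*$ into $(\Omega \cap L)^*$; and for any $x, y \in L \cap \Omega$ and $f, g \in \Omega^*$ the four evaluations in the defining ratio depend only on $r(f), r(g)$, so the same ratio already appears in the definition of $d_{\Omega \cap L}(x, y)$. Taking the maximum over $\Omega^* \times \Omega^*$ then yields $d_\Omega(x, y) \leq d_{\Omega \cap L}(x, y)$ without further work. For the reverse inequality I would prove that $r$ is surjective. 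Under the canonical bijection $\tilde f \leftrightarrow [\ker \tilde f]$ between $L^*$ and $L$, any $\tilde f \in (\Omega \cap L)^*$ corresponds to a point $p \in L \setminus \Omega$; applying linear convexity of $\Omega$ to $p \in \Pb(V) \setminus \Omega$ produces a complex hyperplane $H = \ker f$ through $p$ with $H \cap \Omega = \emptyset$. Since $L \cap \Omega \ne \emptyset$ forces $L \not\subset H$, we have $H \cap L = \{p\}$, so $r(f)$ and $\tilde f$ have the same zero in $L$ and hence represent the same element of $L^*$. The ratio computation then supplies $d_{\Omega \cap L}(x, y) \leq d_\Omega(x, y)$.

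The main obstacle is precisely this reverse inequality in the line isometry: restricting functionals supplies one direction of the desired equality for free, while the opposite direction demands that every supporting hyperplane of $\Omega \cap L$ inside $L$ be the restriction of a supporting hyperplane of $\Omega$ inside $\Pb(V)$ — an extension property which is exactly the linear convexity hypothesis put to work.
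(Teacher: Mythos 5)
Your argument is correct, but note that the paper itself does not prove this statement: it is imported wholesale from Dubois \cite{D2009}, so there is no in-house proof to compare against line by line. What you have written is a legitimate self-contained proof, and it is worth recording how it relates to the surrounding text. Your completeness argument (boundary blow-up via a vanishing $f \in \Omega^*$ and a non-vanishing $g \in \Omega^*$, hence $d_\Omega$-balls are Euclidean-compact, hence Cauchy sequences converge) is essentially identical to the paper's own proof of Lemma~\ref{lem:same_topo}, so that half duplicates an argument the author does write out. The line-isometry half is the genuinely new content, and you have correctly identified the crux: the inequality $d_\Omega(x,y) \le d_{\Omega\cap L}(x,y)$ is free from restriction of functionals, and the reverse inequality is exactly the statement that the restriction map $\Omega^* \to (\Omega\cap L)^*$ is surjective, which is where linear convexity enters (every point of $L\setminus\Omega$ lies on a global tangent hyperplane, which meets $L$ in precisely that point since $L\cap\Omega\neq\emptyset$). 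Two small caveats. First, you set aside non-degeneracy as ``already in hand''; the paper likewise only asserts it with a citation to \cite[Lemma 2.1]{D2009}, and it is not completely trivial (knowing $\abs{f(v)}/\abs{f(w)}$ is constant over $\Omega^*$ controls only absolute values), so if the goal were a fully self-contained proof of ``complete metric'' this point would still need an argument. Second, you invoke Proposition~\ref{prop:zar_den}, which is stated for proper $\Cb$-convex sets, under the weaker hypothesis that $\Omega$ is proper and linearly convex; the paper makes the same move in Proposition~\ref{prop:bd_behav} and Lemma~\ref{lem:same_topo}, and all applications in the paper are to $\Cb$-convex sets, but as written this is a hypothesis mismatch you should either justify or flag.
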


Notice that if $\gamma \in \Aut(\Omega)$ then $^t\gamma$ will preserve $\Omega^*$ and hence we immediately see the following:

\begin{lemma}
Suppose $\Omega$ is a proper linearly convex open set . If $\gamma \in \Aut(\Omega)$ then the action of $\gamma$ on $\Omega$ is an isometry with respect to the Hilbert metric.
\end{lemma}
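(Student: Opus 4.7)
The plan is to show that the Hilbert metric is defined purely in terms of $\Omega^*$ and the values of linear functionals, so invariance under $\gamma$ follows from the fact that the dual (transpose) action $[{}^{t}\gamma]$ preserves $\Omega^*$. Given $v, w \in \Omega$, I would begin by writing
\[
d_\Omega(\gamma v,\gamma w) = \max_{f,g\in\Omega^*} \log\left(\frac{\abs{f(\gamma v)\,g(\gamma w)}}{\abs{f(\gamma w)\,g(\gamma v)}}\right)
\]
and then changing variables under the maximum via $\tilde f = f\circ \gamma = {}^{t}\gamma(f)$ and $\tilde g = g\circ \gamma = {}^{t}\gamma(g)$, so the quotient becomes $\abs{\tilde f(v)\tilde g(w)}/\abs{\tilde f(w)\tilde g(v)}$.

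The one substantive point is to verify that $(f,g)\mapsto({}^{t}\gamma(f),{}^{t}\gamma(g))$ is a bijection of $\Omega^*\times\Omega^*$. By definition $f\in\Omega^*$ means $f(x)\neq 0$ for every $x\in\Omega$; since $\gamma(\Omega)=\Omega$, the evaluation $({}^{t}\gamma(f))(x) = f(\gamma x)$ is also nonzero for every $x\in\Omega$, so ${}^{t}\gamma$ maps $\Omega^*$ into itself. Applying the same argument to $\gamma^{-1}\in \Aut(\Omega)$ shows that ${}^{t}\gamma^{-1}$ is an inverse, so ${}^{t}\gamma$ is a bijection of $\Omega^*$. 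Substituting into the display above then gives $d_\Omega(\gamma v,\gamma w)=d_\Omega(v,w)$. There is essentially no obstacle here — the content is entirely the observation that $\Omega^*$ is $\Aut(\Omega)$-invariant under the dual action, which is immediate from the definition of $\Omega^*$.
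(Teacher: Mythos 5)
Your proposal is correct and follows exactly the paper's argument: the paper dispatches this lemma with the single observation that $^t\gamma$ preserves $\Omega^*$, which is precisely the change-of-variables you carry out in the maximum defining $d_\Omega$. The only difference is that you spell out the bijectivity of $^t\gamma$ on $\Omega^*$ (via $\gamma^{-1}$), which the paper treats as immediate.
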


We also have the following lemma showing that the Hilbert metric generates the standard topology on $\Omega$.

\begin{lemma}
\label{lem:same_topo}
Suppose $\Omega \subset \Pb(V)$ is a proper linearly convex open set . Then 
\begin{enumerate}
\item for any $p \in \Omega$ and $R >0$ the closed ball 
\begin{align*}
B_R(p) = \{ q \in \Omega : d_{\Omega}(p,q) \leq R\}
\end{align*}
is a compact set in $\Omega$ (with respect to the subspace topology), and
\item the subspace topology on $\Omega \subset \Pb(V)$ and the topology on $\Omega$ induced by $d_{\Omega}$ coincide.
\end{enumerate}
\end{lemma}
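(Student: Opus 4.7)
The plan is to first establish continuity of $d_\Omega$ on $\Omega \times \Omega$ with respect to the subspace topology, then to prove (1), and finally to deduce (2) from (1) together with the continuity of $d_\Omega$. For the continuity, observe that for $v,w \in \Omega$ and $f,g \in \Omega^*$ the quantity $|f(v)g(w)|/|f(w)g(v)|$ is a jointly continuous positive function of all four arguments, since $f,g$ never vanish on $\Omega$. Moreover, $\Omega^* \subset \Pb(V^*)$ is closed (hence compact), because for an open linearly convex $\Omega$ the set $\{f \in \Pb(V^*) : \ker f \cap \Omega \neq \emptyset\}$ is open by openness of $\Omega$. Hence $d_\Omega(v,w)$, the maximum of a jointly continuous function over a compact parameter set, is continuous on $\Omega \times \Omega$ in the subspace topology.

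For part (1), let $\{q_n\} \subset B_R(p)$. By compactness of $\Pb(V)$, pass to a subsequence $q_n \to q \in \overline{\Omega}$. Suppose for contradiction that $q \in \partial \Omega$. By linear convexity there exists a complex tangent hyperplane at $q$, i.e., some $f_0 \in \Pb(V^*)$ with $f_0(q)=0$ and $f_0(x) \neq 0$ for all $x \in \Omega$; in particular $f_0 \in \Omega^*$. Since $\Omega$ is proper, Proposition~\ref{prop:zar_den} tells us that $\Omega^*$ is not contained in the hyperplane $\{g \in \Pb(V^*) : g(q)=0\}$, so there exists $g_0 \in \Omega^*$ with $g_0(q) \neq 0$. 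Then
\[
d_\Omega(p,q_n) \;\geq\; \log \frac{\abs{f_0(p)\,g_0(q_n)}}{\abs{f_0(q_n)\,g_0(p)}},
\]
and as $n \to \infty$ the right-hand side tends to $+\infty$, since $f_0(q_n) \to 0$ while $g_0(q_n) \to g_0(q) \neq 0$ and $f_0(p), g_0(p) \neq 0$. This contradicts $d_\Omega(p,q_n) \leq R$, so $q \in \Omega$, and then continuity gives $d_\Omega(p,q) = \lim d_\Omega(p,q_n) \leq R$, i.e.\ $q \in B_R(p)$.

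For part (2), continuity of $d_\Omega$ already shows that $d_\Omega$-open balls are open in the subspace topology, so the subspace topology is at least as fine as the metric topology. For the converse, suppose $d_\Omega(p,q_n) \to 0$; then eventually $q_n \in B_1(p)$, which is compact in the subspace topology by (1). Thus every subsequence of $\{q_n\}$ has a further subsequence converging in the subspace topology to some $q' \in \Omega$, and by continuity $d_\Omega(p,q') = 0$, forcing $q' = p$. Consequently the whole sequence converges to $p$ in the subspace topology, and since both topologies are first countable this proves they coincide. The main obstacle is part (1): the argument needs to simultaneously exploit a linear functional in $\Omega^*$ that vanishes at the escaping boundary limit (supplied by linear convexity, to drive the lower bound to infinity) and a linear functional in $\Omega^*$ that does not vanish there (supplied by Proposition~\ref{prop:zar_den}, to serve as a normalizing witness); everything else is formal topology plus the continuity of a max of continuous functions over a compact set.
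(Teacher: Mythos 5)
Your proposal is correct and follows essentially the same route as the paper: continuity of $d_\Omega$ via maximizing a jointly continuous function over the compact set $\Omega^*$, compactness of $B_R(p)$ by exhibiting $f_0\in\Omega^*$ vanishing at a putative boundary limit and $g_0\in\Omega^*$ not vanishing there so that the distance blows up, and then deducing the equality of topologies from (1) plus a subsequence argument. The only cosmetic differences are that you justify compactness of $\Omega^*$ directly and invoke Proposition~\ref{prop:zar_den} for the nonvanishing functional where the paper simply appeals to properness of $\Omega$.
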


\begin{proof}
The map $F : \Omega \times \Omega \times \Omega^* \times \Omega^* \rightarrow \Rb$ given by 
\begin{align*}
F(v,w,f,g) = \log \left( \frac{\abs{f(v)g(w)}}{\abs{f(w)g(v)}} \right)
\end{align*}
is clearly continuous in the subspace topology. Then since $\Omega^*$ is compact $d_{\Omega} : \Omega \times \Omega \rightarrow \Rb$ is continuous in the subspace topology. Thus if $x_n$ converges to $x$ in the subspace topology then $d_{\Omega}(x_n,x) \rightarrow 0$ and hence $x_n$ converges to $x$ in the topology induced by $d_{\Omega}$.

We now show $B_R(p)$ is compact. Since $d_{\Omega}$ is continuous with respect to the subspace topology, $B_R(p)$ is closed in $\Omega$ with respect to the subspace topology. To see that $B_R(p)$ is compact it is enough to establish the following: if $\{q_n\}_{n \in \Nb}  \subset \Omega$ is a sequence such that $q_n \rightarrow y \in \partial \Omega$  then $d_{\Omega}(p,q_n) \rightarrow \infty$. Let $f \in \Omega^*$ be such that $f(y)=0$ (such a function exists since $\Omega$ is linearly convex). Since $\Omega$ is proper there exists a function $g \in \Omega^*$ such that $g(y) \neq 0$. Then 
\begin{align*}
d_{\Omega}(q_n,p) \geq \log \left(\frac{\abs{f(p)g(q_n)}}{\abs{f(q_n)g(p)}}\right)
\end{align*}
and so $d_{\Omega}(q_n,p) \rightarrow \infty$ as $n \rightarrow \infty$. Thus $B_R(p)$ is compact in $\Omega$.

Now suppose $x_n$ converges to $x$ in the metric topology, that is $d_{\Omega}(x_n,x) \rightarrow 0$. Suppose for a contradiction that $x_n$ does not converge to $x$ in the subspace topology. Then by passing to a subsequence we can suppose that $x_n$ converges to $x^* \in \overline{\Omega}$ in the subspace topology of $\overline{\Omega}$ where $x^* \neq x$. Since $d_{\Omega}(x_n,x)$ is bounded, part (1) of the lemma implies that $x^* \in \Omega$. Then $x_n$ converges to $x^*$ in the subspace topology of $\Omega$. So by the argument above $x_n$ converges to $x^*$ in the topology induced by $d_{\Omega}$. But $d_{\Omega}$ is a metric and so $x^*=x$ which contradicts our initial assumptions. 
\end{proof}

\subsection{The Hilbert metric in dimension two}\label{subsec:dim2}

Suppose $\Omega \subset \Pb(\Cb^2)$ is an open proper $\Cb$-convex set. Then there exists an affine chart $\Cb$ containing $\Omega$ and we can identify $\Pb(\Cb^2)$ with $\overline{\Cb} = \Cb \cup \{\infty\}$. In these coordinates, $\Omega$ has a well known metric $A_{\Omega}$ called the \emph{Apollonian metric}. This metric is defined by 
\begin{align*}
A_{\Omega}(z_1,z_2) = \max_{b_1,b_2 \in \overline{\Cb} \setminus \Omega} \log \frac{ \abs{z_1-b_1}\abs{z_2-b_2}}{\abs{z_2-b_1}\abs{z_1-b_2}} 
\end{align*}
where we define $\abs{z_1 -\infty}/\abs{z_2 -\infty}$ to be 1.  For information about this metric see~\cite{B1998}. The purpose of this subsection is to show that $A_\Omega = d_\Omega$ when $\Omega \subset \Pb(\Cb^2)$.

So suppose $\Omega \subset \Pb(\Cb^2)$ is an open proper $\Cb$-convex set. Then by a projective transformation, we can assume $\Omega$ is a bounded subset of the affine chart $\Cb=\{ [z:1] \in \Pb(\Cb^2)\}$. If $[f] \in \Pb(\Cb^{2*})$, then $f(z_1,z_2) = a(z_1- bz_2)$ for some $a \in \Cb^*$ and $b \in \Cb$ or $f(z_1,z_2)=bz_2$ for some $b \in \Cb^*$. In particular $[f] \in \Omega^*$ if and only if $f(z_1,z_2)=a(z_1- bz_2)$ where $a \neq 0$ and $[1:b] \notin \Omega$ or $f(z_1,z_2) = bz_2$ where $b \neq 0$. Thus if
\begin{align*}
A=\{ f(z_1,z_2)= (z_1-bz_2) : [1:b] \notin \Omega \} \cup \{ f(z_1,z_2) = z_2\} \subset (\Cb^{2})^{*}
\end{align*}
then $A$ is a set of representatives of $\Omega^*$. Then viewing $\Omega$ as a subset of $\Cb=\{[1:z]: z \in \Cb\}$, we can write $d_{\Omega}$ as
\begin{align}
\label{eq:dist_dim2}
d_{\Omega}(z_1,z_2) = \max_{b_1,b_2 \in \overline{\Cb} \setminus \Omega} \log \frac{ \abs{z_1-b_1}\abs{z_2-b_2}}{\abs{z_2-b_1}\abs{z_1-b_2}}
\end{align}
which is exactly the definition of the Apollonian metric.

One well known property of the Apollonian metric is the following:

\begin{proposition}\cite[Lemma 3.1]{B1998}\label{prop:proj_disk}
If $\Db =\{ \abs{z} <1\}$ then $d_{\Db}$ is the standard Poincar\'{e} metric on $\Db$. In particular, $(\Db,d_{\Db})$ is a geodesic metric space.
\end{proposition}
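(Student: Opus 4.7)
The plan is to compute $d_{\Db}$ directly from the Apollonian form (\ref{eq:dist_dim2}) and recognize the answer as the Poincar\'e metric. First, I would observe that any M\"obius transformation of $\overline{\Cb}$ preserving $\Db$ acts by isometries on $(\Db, d_{\Db})$: such transformations preserve $\overline{\Cb}\setminus \Db$ and the Apollonian formula is a supremum of cross ratios, which are M\"obius invariant. Since the Poincar\'e metric has the same invariance and the group of M\"obius automorphisms of $\Db$ acts transitively on pairs of points at a fixed hyperbolic distance, it suffices to verify the identity for a single family of pairs, say $z_1 = 0$ and $z_2 = r$ with $r \in [0,1)$, where the Poincar\'e distance is $\log\frac{1+r}{1-r}$.

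Next I would solve the two-variable optimization. Plugging $z_1 = 0, z_2 = r$ into (\ref{eq:dist_dim2}) reduces the problem to maximizing
\[
\frac{|b_1|\,|r - b_2|}{|r - b_1|\,|b_2|}
\]
over $b_1, b_2 \in \overline{\Cb} \setminus \Db$. Points at $\infty$ contribute a factor $1$ by convention, so I may take $b_i$ finite with $|b_i| = R_i \geq 1$. The reverse triangle inequality gives $|r - b_1| \geq R_1 - r$, with equality precisely when $b_1 = R_1$ is real and positive; since $R \mapsto R/(R-r)$ is strictly decreasing on $[1,\infty)$, the factor $|b_1|/|r-b_1|$ is bounded by $1/(1-r)$, attained at $b_1 = 1$. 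Symmetrically, $|r - b_2|/|b_2| \leq 1 + r$, attained at $b_2 = -1$. Multiplying, the maximum is $(1+r)/(1-r)$, so $d_{\Db}(0,r) = \log\frac{1+r}{1-r}$ as desired.

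The ``in particular'' clause then follows from the classical fact that the Poincar\'e disk is a geodesic metric space, with geodesics given by diameters and circular arcs meeting $\partial \Db$ orthogonally. The only real step requiring care is the optimization: one must check that the two single-variable maxima are attained simultaneously (they are, at $b_1 = 1, b_2 = -1$) and that no ``boundary'' configuration with $b_i = \infty$ beats them (it does not, since $1 < 1/(1-r)$ and $1 < 1+r$ for $r > 0$). The reduction by M\"obius invariance is what makes this otherwise opaque supremum tractable.
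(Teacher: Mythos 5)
Your argument is correct. Note that the paper does not actually prove this proposition; it simply cites Beardon \cite[Lemma 3.1]{B1998}, so you have supplied a self-contained verification of an imported fact. Your route is essentially the standard one (and close in spirit to Beardon's): reduce by M\"obius invariance to the pair $(0,r)$ --- valid because the Apollonian expression is a supremum of absolute cross ratios over $b_1,b_2\in\overline{\Cb}\setminus\Db$, a set permuted by the automorphisms of $\Db$, and because those automorphisms act transitively on ordered pairs at fixed Poincar\'e distance --- and then carry out the two independent single-variable optimizations. The extremal analysis is right: $\abs{b_1}/\abs{r-b_1}\le R_1/(R_1-r)\le 1/(1-r)$ with equality at $b_1=1$, and $\abs{r-b_2}/\abs{b_2}\le 1+r/R_2\le 1+r$ with equality at $b_2=-1$; the two maxima are attained at distinct admissible points so they compound, the $b_i=\infty$ convention contributes only the smaller factor $1$, and the result $\log\frac{1+r}{1-r}$ matches the Poincar\'e distance in the curvature $-1$ normalization $2\abs{dz}/(1-\abs{z}^2)$, which is the convention under which the ``in particular'' geodesic claim is the classical statement about diameters and orthocircular arcs. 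The only thing worth making explicit if this were written out in full is the transitivity step: one should say that the stabilizer of $0$ in $\Aut(\Db)$ contains all rotations, so every ordered pair reduces to $(0,r)$ with $r=\tanh\left(\tfrac{1}{2}d_{\mathrm{Poin}}(z_1,z_2)\right)$; with that spelled out the proof is complete.
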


Unfortunately, as a result of Gehring and Hag demonstrates, this is essentially the only plane domains in which the Apollonian metric is geodesic.

\begin{theorem}\cite[Theorem 3.26]{GH2000}\label{thm:not_geod}
If $\Omega \subset \Cb$ is a bounded simply connected domain such that $(\Omega, d_{\Omega})$ is a geodesic metric space, then $\Omega$ is a disk.
\end{theorem}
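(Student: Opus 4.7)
The plan is to use the geodesic hypothesis to rigidify the maximizing pair in~(\ref{eq:dist_dim2}). For $b_1,b_2 \in \overline{\Cb}\setminus\Omega$ set $\rho_{b_1,b_2}(z) := \log|z - b_1| - \log|z - b_2|$, so that, after swapping $b_1 \leftrightarrow b_2$ to make the expression nonnegative,
\begin{align*}
d_\Omega(z_1,z_2) = \sup_{b_1,b_2}\bigl(\rho_{b_1,b_2}(z_1) - \rho_{b_1,b_2}(z_2)\bigr).
\end{align*}
The first step is a propagation lemma. Let $\gamma$ be a geodesic from $z_1$ to $z_2$, and let $(b_1,b_2)$ attain the supremum; write $\rho := \rho_{b_1,b_2}$ with $\rho(z_1) \ge \rho(z_2)$. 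For any $z \in \gamma$,
\begin{align*}
d_\Omega(z_1,z_2) = d_\Omega(z_1,z) + d_\Omega(z,z_2) &\ge \max\{\rho(z_1)-\rho(z),0\} + \max\{\rho(z)-\rho(z_2),0\} \\
&\ge \rho(z_1) - \rho(z_2) = d_\Omega(z_1,z_2),
\end{align*}
so equality throughout forces $\rho$ to vary monotonically along $\gamma$ and forces $(b_1,b_2)$ to remain a maximizing pair for every subsegment of $\gamma$.

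Next I would exploit M\"obius invariance of the Apollonian metric to normalize $b_1 \mapsto 0$ and $b_2 \mapsto \infty$. In these coordinates $\rho(z) = \log|z|$, so $\gamma$ becomes a path of strictly monotone modulus along which no other pair of boundary points does strictly better in~(\ref{eq:dist_dim2}). Using the geodesic hypothesis to concatenate geodesics and extend $\gamma$ maximally, together with Lemma~\ref{lem:same_topo}(1), the two ends of the extension must accumulate at $0$ and $\infty$ respectively (monotone $\rho$ blows up only at the normalized boundary points). Carrying this out for all pairs of interior points fills $\Omega$ with a family of ``Apollonian-extremal'' geodesic arcs, each joining a distinguished pair of boundary points.

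The main obstacle is the final step: deducing from this rigid structure that $\partial\Omega$ is a single round circle. The heuristic is that a non-circular boundary would produce, at some interior point $z$, an infinitesimal direction in which two other boundary points $(b_1', b_2')$ outperform the would-be maximizer $(b_1, b_2)$, contradicting the propagation. To make this precise I would pass to the infinitesimal Apollonian Finsler norm $\alpha_\Omega(z;v)$, express it in terms of the set $M_z := \{1/(z-b) : b \in \partial\Omega\}$, and show that the geodesic hypothesis forces $\alpha_\Omega(z;v)$ to be conformal (direction-independent) at every interior $z$. Since $\Omega$ is simply connected, the only conformal Finsler norm whose length integral realizes $d_\Omega$ is a constant multiple of the Poincar\'e metric pulled back via the Riemann map $\phi:\Db \to \Omega$. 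Combining this with Beardon's comparison of Apollonian and Poincar\'e densities~\cite{B1998}, equality forces $\phi$ to be M\"obius, so $\Omega$ is a round disk.
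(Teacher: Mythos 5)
First, a point of comparison: the paper does not prove this statement --- it is quoted directly from Gehring and Hag \cite{GH2000} --- so your attempt has to stand entirely on its own. Your opening is sound. The propagation lemma is correct: since $d_\Omega(z_1,z)\ge\abs{\rho(z_1)-\rho(z)}$ (swap $b_1\leftrightarrow b_2$ to get both signs), the chain $d_\Omega(z_1,z_2)=d_\Omega(z_1,z)+d_\Omega(z,z_2)\ge\abs{\rho(z_1)-\rho(z)}+\abs{\rho(z)-\rho(z_2)}\ge\abs{\rho(z_1)-\rho(z_2)}=d_\Omega(z_1,z_2)$ does force equality throughout, hence persistence of the extremal pair on subsegments and monotonicity of $\rho$ along $\gamma$; and M\"obius invariance of the expression in~(\ref{eq:dist_dim2}) legitimately lets you normalize $(b_1,b_2)=(0,\infty)$.

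Past that point the argument has genuine gaps and does not establish the theorem. (i) ``Extend $\gamma$ maximally'' so that its ends reach $0$ and $\infty$: a geodesic metric space need not admit extensions of geodesic segments beyond their endpoints, and completeness together with Lemma~\ref{lem:same_topo} only rules out an end terminating in the interior at finite length; you have shown neither that a maximal extension has infinite length nor that the persistent extremal pair survives concatenation (the pair is only known to be extremal for subsegments of the original $\gamma$, not for an enlarged geodesic containing it). (ii) The decisive step --- that the geodesic hypothesis forces the infinitesimal norm $\alpha_\Omega(z;v)$ to be conformal --- is asserted rather than argued, and the general principle behind it is false: a Minkowski (normed) plane is a geodesic length space whose Finsler norm is not direction-independent, so geodesicity alone cannot force isotropy; any such conclusion must come from the specific structure of the set $M_z$, which you do not use. (iii) Even granting conformality, the claim that the only conformal density whose length integral realizes $d_\Omega$ on a simply connected domain is a multiple of the Poincar\'e density is another unproved leap. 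Since the conclusion rests entirely on (ii) and (iii), the proposal is a plausible strategy sketch, not a proof; the work that Gehring--Hag actually do --- producing, for a non-disk $\Omega$, an explicit failure of additivity via quantitative comparison of the Apollonian and hyperbolic densities --- is precisely the part you have left open.
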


\subsection{Hilbert metric for a projective ball}

Using Theorem~\ref{thm:dubois}, Proposition~\ref{prop:proj_disk}, and the projective ball model of complex hyperbolic $d$-space we can prove:

\begin{proposition}
Let $\Omega \subset \Pb(\Cb^{d+1})$ be a projective ball. Then $(\Omega, d_{\Omega})$ is isometric to complex hyperbolic $d$-space. 
\end{proposition}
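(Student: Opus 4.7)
The plan is to reduce the isometry claim to a one-dimensional computation by combining the transitive action of $\PSU(1,d) \leq \Aut(\Omega)$ on $\Omega$ with the restriction property of the Hilbert metric (Theorem~\ref{thm:dubois}).

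First I would observe that $\PSU(1,d)$ acts on $\Omega$ transitively by projective transformations, and hence acts by isometries with respect to both $d_\Omega$ (by the invariance lemma in Section~\ref{sec:Hilbert}) and the standard complex hyperbolic distance $d_{\Cb\Hb^d}$ (since $\Cb\Hb^d$ is, by definition, the associated rank-one symmetric space for $\PSU(1,d)$). Consequently, it suffices to verify $d_\Omega(p_0, q) = d_{\Cb\Hb^d}(p_0, q)$ for the base point $p_0 = [1 : 0 : \cdots : 0]$ and an arbitrary $q \in \Omega$.

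Next, let $L$ be the unique complex projective line through $p_0$ and $q$. After a unitary change of basis fixing $p_0$, I may assume $q = [1 : r : 0 : \cdots : 0]$ with $r \in [0, 1)$ and $L = \{[z_0 : z_1 : 0 : \cdots : 0]\}$. Then $L \cap \Omega$ is canonically identified with the unit disk $\Db \subset \Cb$ via $[1 : z : 0 : \cdots : 0] \mapsto z$. Applying Theorem~\ref{thm:dubois} followed by Proposition~\ref{prop:proj_disk} yields
\begin{align*}
d_\Omega(p_0, q) \;=\; d_{L \cap \Omega}(p_0, q) \;=\; d_\Db(0, r),
\end{align*}
the Poincar\'e distance between $0$ and $r$.

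Finally I would appeal to the classical fact that complex projective lines through a point of $\Cb\Hb^d$ are totally geodesic complex submanifolds, each isometric to the Poincar\'e disk under the usual normalization of the complex hyperbolic metric. This gives $d_{\Cb\Hb^d}(p_0, q) = d_\Db(0, r)$ as well, and the two metrics agree on $(p_0, q)$, hence on all of $\Omega$ by the first step. The only real obstacle is bookkeeping the normalization conventions: one must fix the scaling of the metric on $\Cb\Hb^d$ so that its restriction to a complex geodesic matches $d_\Db$ exactly (equivalently, that the holomorphic sectional curvature equals $-4$), which is a standard choice.
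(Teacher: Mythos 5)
Your argument is correct and follows essentially the same route as the paper: both reduce to a single complex projective line via the transitive isometric action of $\SU(1,d)$, identify $d_{\Omega}$ restricted to that line with the Poincar\'e metric using Theorem~\ref{thm:dubois} and Proposition~\ref{prop:proj_disk}, and match this against the restriction of the complex hyperbolic metric to a complex geodesic. The normalization point you flag at the end is exactly what the paper handles by citing the explicit construction of the metric in Goldman's book.
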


\begin{proof}
We can pick a basis of $\Cb^{d+1}$ such that $\Omega = \{ [1:z_1: \dots : z_d] : \sum \abs{z_i}^2 < 1 \}$. Let $B_{\Omega}$ be the complex hyperbolic metric on $\Omega$ described in~\cite[Chapter 2, Section 2]{G1999}.  Suppose $L$ is the complex projective line containing $[1:0:\dots:0]$ and $[0:1:0:\dots:0]$. Let $P$ be the Poincar\'{e} metric on the disk $\Omega \cap L=\{ [1:z:0:\dots : 0] : \abs{z} < 1\}$. By Theorem~\ref{thm:dubois} and Proposition~\ref{prop:proj_disk} 
\begin{align*}
d_{\Omega}(x,y) = d_{\Omega \cap L}(x,y) = P(x,y)
\end{align*}
for all $x,y \in L \cap \Omega$. Also by the construction in~\cite[Chapter 2, Section 2]{G1999} 
\begin{align*}
B_{\Omega}(x,y) = P(x,y)
\end{align*}
for all $x,y \in L \cap \Omega$. Since $\SU(1,d)$ acts transitively on the set of complex projective lines intersecting $\Omega$ we then see that $B_{\Omega} = d_{\Omega}$ on all of $\Omega$.
\end{proof}

\subsection{The Hilbert metric is quasi-geodesic for $C^1$ domains}

In this subsection we will show that $(\Omega,d_{\Omega})$ is quasi-geodesic when $\partial \Omega$ is  $C^1$. More precisely:

\begin{theorem}
\label{thm:quasi_geod}
Suppose $\Omega \subset \Pb(\Cb^{d+1})$ is a proper linearly convex open set with $C^1$ boundary. If $\Omega$ is divisible, then $(\Omega,d_{\Omega})$ is a quasi-geodesic metric space.
\end{theorem}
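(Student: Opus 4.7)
The plan is to reduce the theorem to a two-dimensional problem by slicing $\Omega$ with a complex projective line, and then to exhibit an explicit quasi-geodesic inside the slice using its hyperbolic (Poincar\'e) metric; uniform quasi-geodesic constants will be extracted from the cocompact $\Gamma$-action.

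Given $p, q \in \Omega$, let $L$ be the unique complex projective line through $p, q$ and set $U := L \cap \Omega$. By Theorem~\ref{thm:dubois} the inclusion $(U, d_U) \hookrightarrow (\Omega, d_\Omega)$ is isometric, so it is enough to construct a $d_U$-quasi-geodesic from $p$ to $q$ in $U$ with constants independent of $(p,q)$. Because $L \cap \Omega \neq \emptyset$ no complex tangent hyperplane of $\Omega$ can contain $L$ (the tangent hyperplane avoids $\Omega$, while $L$ meets it), so $L$ is transverse to $\partial \Omega$ at every point of $L\cap\partial\Omega$. Consequently $\partial U$ is a $C^1$ Jordan curve in $L$, and in a suitable affine chart $U$ is a bounded $C^1$ Jordan planar domain. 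By Subsection~\ref{subsec:dim2}, $d_U$ equals the Apollonian metric $A_U$. Let $h_U$ denote the hyperbolic metric on $U$ pulled back from the Poincar\'e metric on $\Db$ via a Riemann map, and let $\alpha_{p,q}$ be the unit-speed hyperbolic geodesic from $p$ to $q$ in $U$; sampling at integer times produces a finite sequence $p = x_0, x_1, \dots, x_N = q$ with $N = \lceil h_U(p,q)\rceil$. I will show that this sequence is an $(A,B)$-quasi-isometric embedding $\{0,1,\dots,N\} \to (U, A_U)$ with $(A,B)$ depending only on $(\Omega,\Gamma)$.

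Uniformity of the constants will come from divisibility. The set of pointed complex projective lines $\mathcal{P} := \{(L,x) : x \in L \cap \Omega\}$ admits a proper, cocompact $\Gamma$-action (a fundamental set is given by pairs with $x$ in a fundamental domain $K$ for $\Gamma$ on $\Omega$), and $\Gamma$ acts on each slice by isometries of both $A_U$ and $h_U$. Combined with the global $C^1$-regularity of $\partial \Omega$, compactness of $\mathcal P/\Gamma$ provides a uniform $C^1$-modulus for the boundaries $\partial U_L$ and a uniform Koebe distortion bound for the Riemann maps $\Db \to U_L$. The upper bound $A_U \le C_1 h_U$ is then standard for simply-connected planar domains (combine Beardon's inequality $A_U \le 2k_U$ with the Koebe-theoretic comparison $k_U \asymp h_U$), yielding $A_U(x_i,x_{i+1})\le C_1$ uniformly. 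A matching lower bound $h_U \le C_2 A_U + C_3$ would then give $A_U(x_i,x_j) \ge |i-j|/C_2 - C_3/C_2$ and finish the proof.

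The main obstacle is precisely establishing the lower bound $h_U \le C_2 A_U + C_3$ uniformly across $\mathcal P$. By Theorem~\ref{thm:not_geod} the Apollonian metric is rarely a length metric on a planar domain, so $A_U$ and $h_U$ are genuinely distinct and any comparison must exploit the $C^1$-regularity of $\partial U$ essentially. Concretely, for given $p,q \in U$ I would seek two boundary points $b_1, b_2 \in \partial U$ for which the cross-ratio $|p-b_1||q-b_2|/|p-b_2||q-b_1|$ comes within a bounded factor of $\exp h_U(p,q)$. Near a $C^1$ boundary the tangent-line approximation to $\partial U$ should supply such $b_1, b_2$, and compactness of $\mathcal P/\Gamma$ will turn this pointwise existence into the required uniform estimate.
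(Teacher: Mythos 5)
Your reduction to slices via Theorem~\ref{thm:dubois}, the transversality argument showing each slice boundary is a $C^1$ Jordan curve, and the use of cocompactness to get uniform constants over lines meeting a fundamental domain all match the paper's proof. The difference is in the two-dimensional heart of the argument, and that is where your proposal has a genuine gap: you yourself flag the lower bound $h_U \le C_2 A_U + C_3$ as ``the main obstacle'' and then only sketch a strategy for it (choosing boundary points $b_1,b_2$ from a tangent-line approximation). That inequality is precisely the nontrivial content of the theorem. The upper bound $A_U \le C_1 h_U$ is indeed standard and cheap, but it alone does not prevent the sampled hyperbolic geodesic from collapsing in the Apollonian metric; without the lower bound your sequence $x_0,\dots,x_N$ need not be a quasi-isometric embedding. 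Note also that for general Jordan domains the comparison genuinely fails (the Apollonian metric can be degenerate or far smaller than $h_U$ near boundary cusps), so any proof must use the $C^1$ hypothesis in an essential, quantitative way --- exactly the work your sketch postpones. A secondary issue is uniformity: even granting the pointwise comparison for each slice, you need it to vary controllably as the line varies, which requires a stability statement of the kind made precise in the ``moreover'' clause of Proposition~\ref{prop:abc}.

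The paper closes this gap by a different and cleaner route that avoids comparing $A_U$ with $h_U$ altogether. The key observation (Lemma~\ref{lem:abc1}) is that the Apollonian/Hilbert distance is a supremum of logarithms of ratios of Euclidean distances to boundary points, so a $k$-bi-Lipschitz homeomorphism of the \emph{closures} distorts it by at most the additive constant $4\log k$. Combining the smooth Riemann mapping theorem (Lemma~\ref{lem:abc3}) with a $C^1$-to-$C^\infty$ approximation (Lemma~\ref{lem:abc2}) produces a bi-Lipschitz map $\overline{\Db}\rightarrow\overline{U}$, hence a $(1,K)$-quasi-isometry from $(\Db,d_\Db)$, where the Apollonian metric \emph{is} the Poincar\'e metric by Proposition~\ref{prop:proj_disk}; images of Poincar\'e geodesics are then the required uniform quasi-geodesics. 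If you want to salvage your approach, the fastest repair is to replace your unproven cross-ratio estimate with this bi-Lipschitz transport argument.
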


Theorem~\ref{thm:quasi_geod} will follow from the next proposition, but first some notation: for a $C^1$ embedding $f: S^1 \rightarrow \Cb$ it is well known that $\textrm{Im}(f)$ bounds an open bounded set which we will denote by $\Omega_f$. 

\begin{proposition}
\label{prop:abc}
Suppose $f: S^1 \rightarrow \Cb$ is a $C^1$ embedding, then there exists $K>0$ such that $(\Omega_f, d_{\Omega_f})$ is $(1,K)$-quasi-isometric to $(\Db,d_{\Db})$. Moreover for all $\epsilon >0$ there exists $\delta >0$ such that if $g: S^1 \rightarrow \Cb$ is a $C^1$ embedding with
\begin{align*}
\max_{e^{i\theta} \in S^1} \left\{ \abs{ f(e^{i\theta})-g(e^{i\theta})} + \abs{ df(e^{i\theta})-d
g(e^{i\theta})} \right\} < \delta
\end{align*}
then $(\Omega_{g}, d_{\Omega_{g}})$ is $(1,K+\epsilon)$-quasi-isometric to $(\Db,d_{\Db})$. 
\end{proposition}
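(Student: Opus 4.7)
The plan is to use the intrinsic Poincar\'e (hyperbolic) metric $h_{\Omega_f}$ of the Jordan domain $\Omega_f$ as an intermediary. By the Riemann mapping theorem there is an isometry $(\Db, d_{\Db}) \to (\Omega_f, h_{\Omega_f})$, so the proposition reduces to showing that the identity map $(\Omega_f, h_{\Omega_f}) \to (\Omega_f, d_{\Omega_f})$ is a $(1,K)$-quasi-isometry, equivalently that $\abs{d_{\Omega_f}(z,w) - h_{\Omega_f}(z,w)} \leq K$ for all $z, w \in \Omega_f$.

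The inequality $d_{\Omega_f} \leq h_{\Omega_f}$ is classical for simply connected domains (see Beardon~\cite{B1998}): it follows from the fact that both metrics coincide on the disk (Proposition~\ref{prop:proj_disk}), both are monotone decreasing under domain inclusion, and $h_{\Omega_f}$ arises by pulling back from a disk via the Riemann map.

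The core step is the reverse estimate $h_{\Omega_f} \leq d_{\Omega_f} + K$, which I would establish by exploiting the uniform $C^1$ structure of $f$. Since $S^1$ is compact and $df$ is uniformly continuous, for every $\epsilon > 0$ there is some $r_0 = r_0(f,\epsilon) > 0$ such that for every $p \in \partial \Omega_f$ the Euclidean $r_0$-neighborhood of $p$ meets $\partial \Omega_f$ only inside a cone of aperture $\epsilon$ about the tangent line $T_p$. This lets me locally sandwich $\Omega_f$ between two tangent half-planes at $p$; on a half-plane $h$ and $d$ both agree with the standard Poincar\'e metric, and the comparison localized at a boundary-nearest point of $z$ (resp.\ $w$) yields $h_{\Omega_f}(z,w) - d_{\Omega_f}(z,w) \leq K$ with $K$ controlled by $\epsilon$, $r_0$, and the Euclidean diameter of $\Omega_f$. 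When $z, w$ remain in a fixed compact subset of $\Omega_f$, both metrics are continuous and bounded there, so the estimate holds after enlarging $K$.

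For the perturbation statement, every quantity entering the proof---the cone radius $r_0$, the modulus of continuity of $df$, and the Euclidean diameter of $\Omega_f$---depends continuously on $f$ in the $C^1$ topology on embeddings $S^1 \to \Cb$. Given $\epsilon > 0$, I would choose $\delta$ small enough that for any $g$ with $\max_\theta \{\abs{f(e^{i\theta}) - g(e^{i\theta})} + \abs{df(e^{i\theta}) - dg(e^{i\theta})}\} < \delta$ these invariants for $\Omega_g$ stay within a prescribed tolerance of their values for $\Omega_f$, and running the same argument with the perturbed constants produces $K+\epsilon$. The main obstacle I anticipate is organizing the boundary estimate so that the error is genuinely additive rather than multiplicative: this requires comparing the two half-plane models along an approximate geodesic from $z$ to $w$ and carefully patching contributions from different boundary neighborhoods along that geodesic, rather than at a single base point.
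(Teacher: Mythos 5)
Your reduction to an additive comparison between $d_{\Omega_f}$ and the hyperbolic metric $h_{\Omega_f}$ is a legitimate alternative route, but the argument has a genuine gap exactly at the step you flag as the ``main obstacle,'' and that obstacle is not merely technical. The Apollonian metric is not a length metric --- by Theorem~\ref{thm:not_geod} it is geodesic only when the domain is a disk --- so you cannot integrate a local comparison of densities at nearest boundary points along an approximate geodesic to obtain an additive bound on distances. Moreover, the hyperbolic density at an interior point is a global conformal invariant that the tangent half-plane at a nearest boundary point cannot see: in a $C^1$-smoothed rectangle $(0,L)\times(0,1)$, the hyperbolic distance between the midpoints of the two short ends grows linearly in $L$ (the domain sits inside the strip $\Rb\times(0,1)$, whose density on the centerline is $\pi$, not $1/\delta=2$), while the Apollonian distance grows only logarithmically in $L$; the multiplicative discrepancy $\pi/2$ in the densities, integrated over length $L$, is an additive error proportional to $L$, which no estimate ``controlled by $\epsilon$, $r_0$, and the diameter'' through purely local boundary analysis will produce. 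Your ``easy'' direction is also not justified as stated: monotonicity of the hyperbolic metric under inclusion gives $h_{\Omega_f}\le h_D$ for disks $D\subset\Omega_f$, which points the wrong way for deducing $d_{\Omega_f}\le h_{\Omega_f}$ from agreement on disks, and the Apollonian metric is not invariant under general conformal maps, so it does not transport through the Riemann map; the sharp comparisons in Beardon and Gehring--Hag carry a multiplicative factor of $2$, which would already ruin the multiplicative constant $1$ that the proposition requires.

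The paper's mechanism is different and sidesteps all of this: from the explicit formula~(\ref{eq:dist_dim2}), a $k$-bi-Lipschitz homeomorphism of the \emph{closures} distorts each of the four Euclidean factors by at most $k$ and hence distorts $d_\Omega$ by the purely additive constant $4\log k$ (Lemma~\ref{lem:abc1}); the smooth Riemann mapping theorem supplies a bi-Lipschitz map $\overline{\Db}\to\overline{\Omega}_h$ for a $C^\infty$ approximant $h$ of $f$ (Lemma~\ref{lem:abc3}); and a $C^1$-perturbation argument produces a $(1+\epsilon)$-bi-Lipschitz map $\overline{\Omega}_h\to\overline{\Omega}_f$ (Lemma~\ref{lem:abc2}), which is also what yields the ``moreover'' clause with the same $K$. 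If you want to keep your framework, the workable fix is to replace the local half-plane analysis by a global bi-Lipschitz input --- for instance, for $C^{1,\alpha}$ boundaries the Riemann map itself is bi-Lipschitz up to the boundary by Kellogg--Warschawski, and then Lemma~\ref{lem:abc1} finishes --- but at that point you have essentially reproduced the paper's proof, and the merely $C^1$ case still requires the smooth-approximation step.
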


Delaying the proof of the proposition we prove Theorem~\ref{thm:quasi_geod}.

\begin{proof}[Proof of Theorem~\ref{thm:quasi_geod}]
We first claim that if $L$ is a complex projective line intersecting $\Omega$ then $L \cap \partial\Omega \subset L$ is a $C^1$ embedded submanifold. It is enough to show that $L$ intersects $\partial \Omega$ transversally at every point $x \in \partial \Omega \cap L$. Suppose not, then there exists $x \in \partial \Omega \cap L$ such that $L \subset T_x^{\Cb} \partial \Omega$. But then by $\Cb$-convexity $L \cap \Omega = \emptyset$. Thus we have a contradiction and so $L \cap \partial\Omega \subset L$ is a $C^1$ embedded submanifold. Since $\Omega$ is linearly convex and $\partial \Omega$ is $C^1$, $\Omega$ is $\Cb$-convex (see for instance~\cite[Corollary 2.5.6]{APS2004}) and thus $L \cap \partial\Omega$ is an embedded copy of $S^1$. 

Then, by Proposition~\ref{prop:abc}, for each complex projective line $L$ intersecting $\Omega$ there exists $k(L)>0$ such that $(\Omega \cap L, d_{\Omega \cap L})$ is $(1,k(L))$-quasi-isometric to $(\Db,d_{\Db})$. Moreover, for $L^\prime$ sufficiently close to $L$ Proposition~\ref{prop:abc} implies that $(\Omega \cap L^\prime, d_{\Omega \cap L^\prime})$ is $(1,k(L)+1)$-quasi-isometric to $(\Db,d_{\Db})$. 

Now let $\Gamma \leq \PSL(\Cb^{d+1})$ be a dividing group, then there exists $K \subset \Omega$ compact such that $\Omega  = \cup_{\gamma \in \Gamma} \gamma K$. The set of complex projective lines intersecting $K$ is compact and so by the remarks above there exists $k>0$ such that $(L \cap \Omega, d_{L \cap \Omega})$ is $(1,k)$-quasi-isometric to $(\Db,d_{\Db})$ for any complex projective line $L$ intersecting $K$. In particular, if $x \in K$ and $y \in \Omega$ there is a $(1,k)$-quasi geodesic joining $x$ to $y$. As $\Omega  = \cup_{\gamma \in \Gamma} \gamma K$ we then have that any two points in $\Omega$ are joined by a $(1,k)$-quasi geodesic.
\end{proof}

Proposition~\ref{prop:abc} will follow from the next three lemmas.

\begin{lemma}
\label{lem:abc1}
Suppose $\Omega_1, \Omega_2 \subset \Cb$ are open bounded sets. If $F: \overline{\Omega}_1 \rightarrow \overline{\Omega}_2$ is a $k$-bi-Lipschitz homeomorphism with $F(\Omega_1)=\Omega_2$, then $F$ induces a $(1,4\log k)$-quasi-isometry $(\Omega_1, d_{\Omega_1}) \rightarrow (\Omega_2, d_{\Omega_2})$. 
\end{lemma}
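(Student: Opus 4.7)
The plan is to use the explicit formula~(\ref{eq:dist_dim2}) for the Hilbert metric in dimension two and exploit the $k$-bi-Lipschitz distortion factor by factor. Given $z_1, z_2 \in \Omega_1$, I will pick maximizers $b_1, b_2 \in \overline{\Cb} \setminus \Omega_1$ for the formula computing $d_{\Omega_1}(z_1, z_2)$, transport them through $F$ (sending $\infty$ to $\infty$ if it occurs) to obtain competitors $b_1', b_2' \in \overline{\Cb} \setminus \Omega_2$ in the formula for $d_{\Omega_2}(F(z_1), F(z_2))$, and then compare the two log-ratios term by term.

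The existence of maximizers follows from compactness of $\overline{\Cb} \setminus \Omega_1$ in the Riemann sphere together with continuity and boundedness of the log-ratio on $(\overline{\Cb} \setminus \Omega_1)^2$: the points $z_1, z_2$ lie at positive distance from $\partial \Omega_1$, so no factor blows up. Using the hypothesis $k^{-1}\abs{z-z'} \leq \abs{F(z)-F(z')} \leq k\abs{z-z'}$ for $z, z' \in \overline{\Omega}_1$, each of the two numerator factors involving a finite maximizer is bounded below by $k^{-1}$ times its $\Omega_1$-counterpart, and each denominator factor is bounded above by $k$ times its counterpart; factors involving $\infty$ cancel identically under the convention $\abs{\,\cdot\, - \infty}/\abs{\,\cdot\, - \infty} = 1$. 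The transported ratio is therefore at least $k^{-4}$ times the original, which yields $d_{\Omega_2}(F(z_1), F(z_2)) \geq d_{\Omega_1}(z_1, z_2) - 4\log k$.

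The matching upper bound comes from running the same argument with $F^{-1}\colon \overline{\Omega}_2 \to \overline{\Omega}_1$, which is again $k$-bi-Lipschitz, producing $d_{\Omega_1}(z_1, z_2) \geq d_{\Omega_2}(F(z_1), F(z_2)) - 4\log k$. Since $F(\Omega_1) = \Omega_2$ the map is surjective, so these two inequalities together establish the claimed $(1, 4\log k)$-quasi-isometry in the sense of Section~\ref{sec:prelim}. The only minor point requiring care is when one or both maximizers equal $\infty$: there some factors disappear and the estimate in fact improves (to $2\log k$, or to $0$ if both are $\infty$), so the constant $4\log k$ is safely uniform. No substantive obstacle arises in this lemma; the real content of Proposition~\ref{prop:abc} will come from producing suitable bi-Lipschitz maps between $\Omega_f$ (and nearby $\Omega_g$) and $\Db$ in the subsequent lemmas, after which Lemma~\ref{lem:abc1} can be applied.
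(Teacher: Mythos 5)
Your strategy is exactly the one the paper compresses into a single line (apply the $k$-bi-Lipschitz bounds factor by factor in Equation~(\ref{eq:dist_dim2}): two numerator factors and two denominator factors each lose at most a factor of $k$, giving $4\log k$), and the symmetric use of $F^{-1}$ plus surjectivity is the right way to finish. However, there is one genuine gap in your transport step: the maximizers $b_1,b_2$ range over $\overline{\Cb}\setminus\Omega_1$, while $F$ is defined only on $\overline{\Omega}_1$. Since $\Omega_1$ is bounded, $\overline{\Cb}\setminus\Omega_1$ contains many finite points lying outside $\overline{\Omega}_1$, and for such a maximizer $F(b_i)$ simply does not exist; your parenthetical only addresses the single exceptional point $\infty$, and the bi-Lipschitz inequalities you invoke (e.g.\ $\abs{F(z_1)-F(b_1)}\geq k^{-1}\abs{z_1-b_1}$) are likewise only available when $b_1\in\overline{\Omega}_1$. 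The fix is to first reduce the maximum in Equation~(\ref{eq:dist_dim2}) to $b_1,b_2\in\partial\Omega_1$: the quantity factors as $\left(\abs{z_1-b_1}/\abs{z_2-b_1}\right)\cdot\left(\abs{z_2-b_2}/\abs{z_1-b_2}\right)$, and each factor, being the modulus of a nonconstant M\"obius transformation in $b_i$ which is finite and nonzero off $\{z_1,z_2\}\subset\Omega_1$, is an open function on the open set $\overline{\Cb}\setminus\overline{\Omega}_1$ and so cannot attain its maximum there; hence the maximum over the compact set $\overline{\Cb}\setminus\Omega_1$ is attained on $\partial\Omega_1$. (This is the same open-mapping reduction the paper records right after defining $d_\Omega$, when it replaces $\Omega^*$ by $\partial\Omega^*$.) With $b_1,b_2\in\partial\Omega_1$ the points $F(b_i)$ are defined, lie in $\partial\Omega_2\subset\overline{\Cb}\setminus\Omega_2$, the case $b_i=\infty$ never arises, and the rest of your argument goes through as written.
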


\begin{proof}
Since
\begin{align*}
\frac{1}{k} \abs{x-y} \leq \abs{f(x)-f(y)} \leq k \abs{x-y}
\end{align*}
for all $x,y \in \overline{\Omega}_1$ we have, from Equation~(\ref{eq:dist_dim2}), that 
\begin{align*}
d_{\Omega_1}(x,y) - 4\log k \leq d_{\Omega_2}(f(x),f(y)) \leq d_{\Omega_1}(x,y) + 4\log k
\end{align*}
for all $x,y \in \Omega_1$.
\end{proof}

\begin{lemma}
\label{lem:abc2}
Suppose $f: S^1 \rightarrow \Cb$ is a $C^1$ embedding, then for all $\epsilon >0$ there exists $\delta >0$ such that if $g: S^1 \rightarrow \Cb$ is a $C^1$ embedding with
\begin{align*}
\max_{e^{i\theta} \in S^1} \left\{ \abs{ f(e^{i\theta})-g(e^{i\theta})} + \abs{ df(e^{i\theta})-d
g(e^{i\theta})} \right\} < \delta
\end{align*}
then there exists $F: \overline{\Omega}_f \rightarrow \overline{\Omega}_g$ a $(1+\epsilon)$-bi-Lipschitz homeomorphism with $F(\Omega_f) = \Omega_g$.
\end{lemma}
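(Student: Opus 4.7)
The strategy is to build an ambient bi-Lipschitz homeomorphism $F:\Cb\to\Cb$ which carries $\partial\Omega_f$ onto $\partial\Omega_g$, is supported in a thin tubular neighborhood of $\partial\Omega_f$, and is as close to the identity as we like once $\delta$ is small. Restricting $F$ to $\overline{\Omega}_f$ then yields the required bi-Lipschitz map onto $\overline{\Omega}_g$; we will have $F(\Omega_f)=\Omega_g$ because $F$ is a homeomorphism of $\Cb$ equal to the identity outside a bounded set, hence preserves bounded versus unbounded complementary components of the Jordan curve.

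To set up the tube, I first approximate $f$ in $C^1$ norm by a $C^\infty$ embedding $\tilde f:S^1\to\Cb$, with $\|\tilde f-f\|_{C^1}$ as small as desired (via mollification). Let $\tilde N$ denote the smooth inward unit normal field along $\tilde\Gamma:=\tilde f(S^1)$, and set
$$\tilde\Phi(e^{is},t) \;=\; \tilde f(e^{is}) + t\,\tilde N(e^{is}).$$
For $\rho>0$ sufficiently small, $\tilde\Phi:S^1\times(-\rho,\rho)\to\Cb$ is a $C^\infty$ diffeomorphism onto a tubular neighborhood $U$ of $\tilde\Gamma$. Because $d\tilde\Phi|_{t=0}$ is a rotation of the standard frame, by further shrinking $\rho$ I may assume both $\tilde\Phi$ and $\tilde\Phi^{-1}$ are $(1+\eta)$-bi-Lipschitz, where $\eta>0$ is chosen so that $(1+\eta)^3<1+\epsilon$.

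Assuming $\delta$ and $\|\tilde f-f\|_{C^1}$ are small enough, both $\partial\Omega_f$ and $\partial\Omega_g$ lie in $\tilde\Phi(S^1\times(-\rho/2,\rho/2))$, meet each normal fiber transversely in a unique point, and are thus graphs in tubular coordinates: $\tilde\Phi^{-1}(\partial\Omega_f)=\{(e^{is},\tau_f(s))\}$ and $\tilde\Phi^{-1}(\partial\Omega_g)=\{(e^{is},\tau_g(s))\}$ for $C^1$ functions $\tau_f,\tau_g:S^1\to\Rb$. Since $\tilde\Phi^{-1}$ is $C^\infty$, an implicit function computation gives $\|\tau_f\|_{C^1}=O(\|\tilde f-f\|_{C^1})$ and $\|\tau_g-\tau_f\|_{C^1}=O(\|g-f\|_{C^1})=O(\delta)$. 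Fix now a smooth cutoff $\chi:\Rb\to[0,1]$ with $\chi(0)=1$ and $\supp\chi\subset(-\rho/2,\rho/2)$, and define $\tilde F:S^1\times(-\rho,\rho)\to S^1\times(-\rho,\rho)$ by
$$\tilde F(e^{is},t) \;=\; \bigl(e^{is},\; t + \chi\bigl(t-\tau_f(s)\bigr)(\tau_g(s)-\tau_f(s))\bigr).$$
This map sends the graph of $\tau_f$ onto the graph of $\tau_g$, equals the identity outside a compact subset of $S^1\times(-\rho,\rho)$, and differs from the identity by a term whose $C^1$ norm is $O(\delta)$ (the $t$-derivative produces $\chi'\cdot(\tau_g-\tau_f)=O(\delta)$; the $s$-derivative produces $-\chi'\tau_f'(\tau_g-\tau_f)+\chi(\tau_g'-\tau_f')=O(\delta)$). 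Hence $\tilde F$ is $(1+\eta)$-bi-Lipschitz once $\delta$ is small enough.

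Finally define $F=\tilde\Phi\circ\tilde F\circ\tilde\Phi^{-1}$ on $U$ and $F=\mathrm{id}$ elsewhere. Continuity at $\partial U$ follows because $\tilde F$ is the identity near the boundary of its domain. On $U$ the map $F$ is a composition of three $(1+\eta)$-bi-Lipschitz pieces, and elsewhere it is an isometry, so $F$ is globally $(1+\eta)^3<1+\epsilon$ bi-Lipschitz. By construction $F(\partial\Omega_f)=\partial\Omega_g$, so $F$ restricts to the desired homeomorphism $\overline{\Omega}_f\to\overline{\Omega}_g$ with $F(\Omega_f)=\Omega_g$. The one delicate point is the estimate $\|\tau_g-\tau_f\|_{C^1}=O(\delta)$: this is the main obstacle, and is the reason for passing through the smooth approximation $\tilde f$, since for a merely $C^1$ curve the unit normal field and tubular chart are only continuous, and graph functions in such a chart would not a priori be differentiable.
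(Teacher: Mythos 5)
Your proposal is correct and follows essentially the same route as the paper: a collar/tubular neighborhood of $\mathrm{Im}(f)$, a graph representation of $\mathrm{Im}(g)$ in those coordinates, and a cutoff interpolation that is $C^1$-close to the identity, restricted to $\overline{\Omega}_f$. The only cosmetic point is that $d\tilde\Phi|_{t=0}$ is a rotation of the standard frame only if $\tilde f$ is parametrized by (rescaled) arc length, which is harmless to arrange; beyond that, your write-up supplies details (the smoothing needed to get a genuine tubular chart, the $C^1$ control of the graph functions) that the paper's proof leaves implicit.
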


\begin{proof}
Since $f:S^1 \rightarrow \Cb$ is a $C^1$ embedding there exists a collar neighborhood extension $\Phi: \{ 1 -\eta \leq \abs{z} \leq 1+\eta\} \rightarrow \Cb$. Then if $\delta$ is small enough, $\textrm{Im}(g)$ can be parameterized by $e^{i\theta} \rightarrow \Phi(r(e^{i\theta}) e^{i\theta})$ for some $C^1$ function $r:S^1 \rightarrow (1-\eta,1+\eta)$. By further shrinking $\delta$, it is easy to construct a $C^1$ diffeomorphism $F:\overline{\Omega}_f \rightarrow \overline{\Omega}_g$ such that $\abs{F^\prime(z)}$ and $\abs{(F^{-1})^\prime(z)}$ are bounded by $(1+\epsilon)$. Thus $F:\overline{\Omega}_f \rightarrow \overline{\Omega}_g$ is $(1+\epsilon)$-bi-Lipschitz.
\end{proof}

\begin{lemma}
\label{lem:abc3}
Suppose $f: S^1 \rightarrow \Cb$ is a $C^\infty$ embedding, then there exists a $k$-bi-Lipschitz homeomorphism $F: \overline{\Db} \rightarrow \overline{\Omega}_f$ with $F(\Db) = \Omega_f$.
\end{lemma}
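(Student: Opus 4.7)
The plan is to construct a $C^\infty$ diffeomorphism $F:\overline{\Db}\to\overline{\Omega_f}$ with $F(\Db)=\Omega_f$. Since $\overline{\Db}$ is compact, the derivatives of $F$ and $F^{-1}$ will then be uniformly bounded, so $F$ will be $k$-bi-Lipschitz for some $k$ depending only on those bounds.

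I would build $F$ in two pieces, glued along a smooth circle inside $\Omega_f$. For the outer annular piece, the $C^\infty$ embedding $f$ determines a smoothly varying inward unit normal $N(e^{i\theta})$ to $\partial\Omega_f$ at $f(e^{i\theta})$, and the map
\[
\Psi(s,\theta)=f(e^{i\theta})+sN(e^{i\theta})
\]
has linearly independent partial derivatives at $s=0$ (one tangent, one normal to $\partial\Omega_f$). The inverse function theorem together with compactness of $S^1$ then yields $\eta>0$ such that $\Psi:[0,\eta]\times S^1\to\overline{\Omega_f}$ is a $C^\infty$ diffeomorphism onto a closed collar $C\subset\overline{\Omega_f}$ of $\partial\Omega_f$. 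Rescaling, I set
\[
F(re^{i\theta})=\Psi\bigl(2\eta(1-r),\theta\bigr)\qquad\text{for }1/2\leq|z|\leq 1,
\]
which is a $C^\infty$ diffeomorphism of this annulus onto $C$.

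It remains to extend $F$ over $\{|z|\leq 1/2\}$ as a $C^\infty$ diffeomorphism onto the complementary region $\overline{\Omega_f}\setminus\inte(C)$, which is a compact planar region bounded by the $C^\infty$ Jordan curve $F(\{|z|=1/2\})=\Psi(\eta,\cdot)$. Such an extension exists by the smooth Schoenflies theorem in the plane, or alternatively by combining the Riemann mapping theorem with Kellogg's theorem, which ensures that a Riemann map onto a domain bounded by a $C^\infty$ Jordan curve extends to a $C^\infty$ diffeomorphism of the closed disk onto the closed domain. One then precomposes with a smooth self-diffeomorphism of the bounding circle so that the two pieces agree there, producing a globally $C^\infty$ diffeomorphism $F:\overline{\Db}\to\overline{\Omega_f}$.

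The main obstacle is this last smooth extension across the interior circle, which relies on either the smooth Schoenflies theorem in dimension two or Kellogg-type boundary regularity for conformal maps. Once that is granted, the collar construction is straightforward and the $k$-bi-Lipschitz estimate follows automatically from $C^1$-smoothness on a compact set.
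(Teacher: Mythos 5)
Your proposal rests on the same essential input as the paper's proof: the paper disposes of this lemma in one line by citing the smooth Riemann mapping theorem (Riemann mapping plus Kellogg-type boundary regularity), which, applied directly to $\Omega_f$, already produces a $C^\infty$ diffeomorphism $F:\overline{\Db} \rightarrow \overline{\Omega}_f$ with nonvanishing derivative up to the boundary; compactness then gives the bi-Lipschitz constant exactly as you say. Your detour through a normal collar of $\partial \Omega_f$ is therefore unnecessary --- the hypothesis is that $\partial\Omega_f = \mathrm{Im}(f)$ is itself a $C^\infty$ Jordan curve, so there is no need to retreat to an inner smooth circle before invoking the theorem. Moreover, the one step you gloss over is the gluing: arranging that the two pieces \emph{agree} on the circle $\abs{z}=1/2$ does not produce a globally $C^\infty$ (or even $C^1$) diffeomorphism, since the one-sided derivatives along the seam need not match; to genuinely smooth the gluing one needs a collar/isotopy argument, not just agreement of boundary values. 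This overstatement is harmless for the lemma itself --- a continuous map that is $C^1$ with uniformly bounded derivative and inverse derivative on each of the two closed pieces is still bi-Lipschitz (for the upper bound use convexity of the inner disk to split a segment at the seam; the lower bound needs a similar, slightly more careful splitting in the image) --- but as written the claim of a global $C^\infty$ diffeomorphism is not justified. The cleanest repair is simply to delete the collar and the gluing and apply the smooth Riemann mapping theorem to $\Omega_f$ itself, which is the paper's (implicit) argument.
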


\begin{proof} This (and more) follows from the smooth version of the Riemann mapping theorem (see for instance~\cite[Chapter 5, Theorem 4.1]{T2011}). \end{proof}

We can now prove Proposition~\ref{prop:abc}.

\begin{proof}[Proof of Proposition~\ref{prop:abc}]
Suppose $f: S^1 \rightarrow \Cb$ is a $C^1$ embedding. Then since any $C^1$ embedding can be approximated by a $C^\infty$ embedding, Lemma~\ref{lem:abc2} and Lemma~\ref{lem:abc3} implies the existence of a $k$-bi-Lipschitz map $F: \overline{\Db} \rightarrow \overline{\Omega}_f$. By Lemma~\ref{lem:abc1} this induces a $(1,4\log(k))$-quasi-isometry $(\Db,d_{\Db}) \rightarrow (\Omega_f,d_{\Omega_f})$. Finally the ``moreover'' part of the proposition is just Lemma~\ref{lem:abc1} and Lemma~\ref{lem:abc2}.
\end{proof}

\subsection{Action of the automorphism group:} 

In this subsection we establish some properties of $\Aut(\Omega)$. 

\begin{proposition}
\label{prop:proper}
Suppose $\Omega \subset \Pb(V)$ is a proper linearly convex open set, then $\Aut(\Omega)$ acts properly on $\Omega$.
\end{proposition}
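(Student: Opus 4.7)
My strategy exploits that $\Aut(\Omega)$ acts on $(\Omega, d_\Omega)$ by isometries together with the properness of this metric space furnished by Lemma~\ref{lem:same_topo}. Suppose $\{\gamma_n\} \subset \Aut(\Omega)$ is a sequence with $\gamma_n x_n \in K$ for some fixed compact $K \subset \Omega$ and $x_n \in K$; after passing to a subsequence, $x_n \to x$ and $y_n := \gamma_n x_n \to y$ in $K$. For any $z \in \Omega$, the isometry identity $d_\Omega(\gamma_n z, y_n) = d_\Omega(z, x_n)$ bounds $d_\Omega(\gamma_n z, y)$ uniformly in $n$, so by Lemma~\ref{lem:same_topo} the orbit $\{\gamma_n z\}$ remains in a compact subset of $\Omega$. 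My goal is to extract a subsequential limit of $\{\gamma_n\}$ in $\Aut(\Omega)$.

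I would then lift $\gamma_n$ to $\tilde\gamma_n \in \SL(V)$, normalize $A_n := \tilde\gamma_n / \norm{\tilde\gamma_n}$, and pass to a further subsequence so that $A_n \to A$ in $\End(V)$ with $\norm{A} = 1$. Suppose first that $\det A \neq 0$. Since $\det A_n = \norm{\tilde\gamma_n}^{-(d+1)} \to \det A$, the norms $\norm{\tilde\gamma_n}$ remain bounded and (after subsequence) $\tilde\gamma_n \to \tilde\gamma \in \SL(V)$, giving $\gamma_n \to \gamma := [\tilde\gamma]$ in $\PSL(V)$. Since convergence in $\PSL(V)$ is uniform on the compact space $\Pb(V)$, taking Hausdorff limits in $\gamma_n \overline{\Omega} = \overline{\Omega}$ produces $\gamma \overline{\Omega} = \overline{\Omega}$, whence $\gamma \Omega = \Omega$ and $\gamma \in \Aut(\Omega)$.

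The main obstacle is ruling out the case $\det A = 0$. Set $K_A := [\ker A]$ and $I_A := [\mathrm{Im}(A)]$, which are proper projective subspaces of $\Pb(V)$. For each $p \in \Omega \setminus K_A$ the sequence $\gamma_n p = [A_n] p$ converges to $[A] p \in I_A$, and by the first paragraph this limit lies in $\Omega$. Thus $[A]$ restricts to a continuous map
\begin{align*}
[A] \colon \Omega \setminus K_A \longrightarrow \Omega \cap I_A.
\end{align*}
Passing to the limit in $d_\Omega(\gamma_n p, \gamma_n q) = d_\Omega(p,q)$ shows that this restriction is an isometric embedding with respect to $d_\Omega$ on both sides. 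By Lemma~\ref{lem:same_topo} each of these $d_\Omega$-topologies coincides with the subspace topology, so $[A]$ is a topological embedding of the non-empty open subset $\Omega \setminus K_A$ of $\Pb(V)$ (real dimension $2d$) into the proper complex projective subspace $I_A$ (real dimension at most $2d-2$). This contradicts invariance of dimension, so the non-invertible case cannot occur, completing the proof.
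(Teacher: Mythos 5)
Your proof is correct and follows essentially the same route as the paper's: both control orbits via the isometric action on the proper metric space $(\Omega,d_\Omega)$ (using Lemma~\ref{lem:same_topo} to keep limit points inside $\Omega$), extract a limit $A$ of norm-one lifts, and rule out $\det A = 0$ because a degenerate limit would give an injective map of the open set $\Omega \setminus [\ker A]$ into the lower-dimensional subspace $[\mathrm{Im}(A)]$. The only cosmetic difference is that the paper first invokes Arzel\'a--Ascoli to produce the limit isometry $f$ and leaves the dimension count implicit in ``as $f$ is injective, $\hat{\varphi}$ has full rank,'' whereas you work directly with the matrix limit and make the invariance-of-dimension step explicit.
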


\begin{remark} There are two natural topologies on $\Aut(\Omega)$: the first comes from the inclusion $\Aut(\Omega) \hookrightarrow \PSL(V)$ and the second comes from the inclusion $\Aut(\Omega) \hookrightarrow \textrm{Homeo}(\Omega)$ where $\textrm{Homeo}(\Omega)$ has the compact-open topology. The proof of this proposition shows that the two topologies coincide. 
\end{remark}

\begin{proof}
It is enough to show that the set $\{ \varphi \in \Aut(\Omega) : \varphi K \cap K \neq \emptyset\}$ is compact for any $K \subset \Omega$ compact. So assume $\{\varphi_n \}_{n \in \Nb} \subset \{ \varphi \in \Aut(\Omega) : \varphi K \cap K \neq \emptyset\}$ for some compact $K$. We claim that $\varphi_n$ has a convergent subsequence in $\Aut(\Omega)$. Let $f_n : \Omega \rightarrow \Omega$ be the homeomorphism induced by $\varphi_n$, that is $f_n(p)=\varphi_n(p)$. Since each $f_n$ is an isometry with respect to $d_{\Omega}$ and $f_n(K) \cap K \neq \emptyset$, using the Arzel\'{a}-Ascoli theorem we may pass to a subsequence such that $f_n$ converges uniformly on compact subsets of $\Omega$ to a continuous map $f: \Omega \rightarrow \Omega$. Moreover $f$ is an isometry and hence injective.

Now we can pick $\hat{\varphi}_n \in \GL(V)$ representing $\varphi_n \in \PSL(V)$ such that $\norm{\hat{\varphi}_n}=1$. Then by passing to a subsequence we can suppose $\hat{\varphi}_n \rightarrow \hat{\varphi} \in \End(V)$. By construction, for $y \in \Omega \setminus (\Omega \cap  \ker \hat{\varphi})$ we have that $\hat{\varphi}(y) = f(y)$. As $f$ is injective this implies that $\hat{\varphi}$ has full rank. Thus $[\hat{\varphi}] \in \PSL(V)$ and  $\varphi_n \rightarrow [\hat{\varphi}]$ in $\PSL(V)$. As $\Aut(\Omega)$ is closed in $\PSL(V)$ this implies that  $\varphi_n \rightarrow [\hat{\varphi}]$ in $\Aut(\Omega)$.

Since $\varphi_n$ was an arbitrary sequence in $ \{ \varphi \in \Aut(\Omega) : \varphi K \cap K \neq \emptyset\}$ this implies that $ \{ \varphi \in \Aut(\Omega) : \varphi K \cap K \neq \emptyset\}$ is compact. Since $K$ was an arbitrary compact set of $\Omega$ the proposition follows. 
\end{proof}

Now suppose $\Gamma \leq \PSL(V)$ is a discrete group dividing a proper $\Cb$-convex set $\Omega$ with $C^1$ boundary. By Theorem~\ref{thm:quasi_geod} $(\Omega,d_{\Omega})$ is a quasi-geodesic metric space and by Proposition~\ref{prop:proper} $\Gamma$ acts properly on $(\Omega,d_{\Omega})$. Then by Theorem~\ref{thm:fundgeomgp}, $\Gamma$ is finitely generated and so by applying Selberg's Lemma we obtain:

\begin{corollary}
\label{cor:fin_gen}
Suppose $\Omega$ is a proper linearly convex open set with $C^1$ boundary. If $\Omega$ is divisible, then $\Omega$ is divisible by some torsion free subgroup of $\PSL(\Cb^{d+1})$.
\end{corollary}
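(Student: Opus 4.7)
The statement is an immediate consequence of Selberg's lemma once finite generation of the dividing group is in hand, which the paragraph preceding the corollary has already established. So my proof plan is essentially bookkeeping.

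First, I would invoke the setup of the preceding paragraph: let $\Gamma \leq \PSL(\Cb^{d+1})$ be a discrete group dividing $\Omega$. Since $\Omega$ is proper linearly convex with $C^1$ boundary, Theorem~\ref{thm:quasi_geod} tells us that $(\Omega,d_\Omega)$ is a proper quasi-geodesic metric space, Proposition~\ref{prop:proper} tells us that $\Gamma$ acts properly on $(\Omega,d_\Omega)$, and by hypothesis the action is cocompact. Hence Theorem~\ref{thm:fundgeomgp} applies and $\Gamma$ is finitely generated.

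Next, I would apply Selberg's lemma: any finitely generated subgroup of $\GL_N(\Cb)$ (and hence any finitely generated subgroup of $\PSL(\Cb^{d+1})$, after choosing a lift to $\SL(\Cb^{d+1})$ and noting that a finite-index torsion-free subgroup downstairs can be obtained from a finite-index torsion-free subgroup upstairs by intersection) admits a torsion-free subgroup of finite index. Let $\Gamma' \leq \Gamma$ be such a subgroup.

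Finally, since $[\Gamma:\Gamma']<\infty$, pick coset representatives $g_1,\dots,g_k$ for $\Gamma'\backslash \Gamma$. If $K\subset \Omega$ is a compact fundamental domain for $\Gamma$, then $K' = \bigcup_{i=1}^{k} g_i K$ is a compact subset of $\Omega$ satisfying $\Gamma' \cdot K' = \Omega$, so $\Gamma' \backslash \Omega$ is compact. The action of $\Gamma'$ on $\Omega$ is still properly discontinuous as the restriction of a properly discontinuous action. Hence $\Gamma'$ is a torsion-free discrete subgroup of $\PSL(\Cb^{d+1})$ dividing $\Omega$, which is what the corollary asserts.

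There is no serious obstacle here; the only mild point requiring care is the passage of Selberg's lemma from $\SL(\Cb^{d+1})$ to $\PSL(\Cb^{d+1})$, which is handled by the standard trick of lifting and then projecting, using that the kernel of $\SL\to \PSL$ is finite (central), so torsion-freeness is preserved modulo a further finite index refinement.
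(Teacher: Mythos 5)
Your proposal is correct and follows exactly the route the paper takes: finite generation of the dividing group via Theorem~\ref{thm:quasi_geod}, Proposition~\ref{prop:proper}, and Theorem~\ref{thm:fundgeomgp}, followed by Selberg's Lemma; the paper leaves the remaining bookkeeping (lifting to $\SL(\Cb^{d+1})$ and checking that a finite-index subgroup still divides $\Omega$) implicit, which you have simply spelled out.
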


For torsion free dividing groups we have the following:

\begin{corollary}
\label{cor:inj_rad}
Suppose $\Omega$ is a proper linearly convex open set and $\Gamma \leq \Aut(\Omega)$ is a torsion free discrete group dividing $\Omega$. Then there exists $\epsilon >0$ such that $d_{\Omega}(\gamma p, p) > \epsilon$ for all $\gamma\in \Gamma \setminus \{1\}$ and for all $p \in \Omega$.
\end{corollary}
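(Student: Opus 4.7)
The plan is to argue by contradiction using the standard injectivity-radius argument for a torsion-free group acting properly by isometries on a proper metric space. Suppose no such $\epsilon$ exists. Then there are sequences $\gamma_n \in \Gamma \setminus \{1\}$ and $p_n \in \Omega$ with $d_{\Omega}(\gamma_n p_n, p_n) \to 0$. The goal is to extract a nontrivial element of $\Gamma$ with a fixed point and then contradict the torsion-free hypothesis.

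First I would use cocompactness to normalize: since $\Gamma$ divides $\Omega$, there is a compact set $K \subset \Omega$ with $\Omega = \bigcup_{\delta \in \Gamma} \delta K$, so we may write $p_n = \delta_n q_n$ with $q_n \in K$ and $\delta_n \in \Gamma$. Setting $\tilde\gamma_n = \delta_n^{-1} \gamma_n \delta_n$, the fact that $\delta_n$ acts as an isometry of $(\Omega, d_{\Omega})$ yields
\begin{align*}
d_{\Omega}(\tilde\gamma_n q_n, q_n) = d_{\Omega}(\gamma_n p_n, p_n) \longrightarrow 0.
\end{align*}
Next, by the compactness of $K$, I would pass to a subsequence so that $q_n \to q \in K \subset \Omega$ in the subspace topology; by Lemma~\ref{lem:same_topo} this is also convergence in the $d_{\Omega}$ topology. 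The triangle inequality then gives $d_{\Omega}(\tilde\gamma_n q, q) \leq d_{\Omega}(\tilde\gamma_n q, \tilde\gamma_n q_n) + d_{\Omega}(\tilde\gamma_n q_n, q_n) + d_{\Omega}(q_n, q) \to 0$, using that each $\tilde\gamma_n$ is an isometry.

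Now I would invoke Proposition~\ref{prop:proper}: the action of $\Aut(\Omega)$ on $\Omega$ is proper, so the set $\{ \varphi \in \Aut(\Omega) : \varphi \cdot q \in \overline{B_1(q)}\}$ is compact in $\Aut(\Omega)$. Hence, after passing to a further subsequence, $\tilde\gamma_n$ converges in $\Aut(\Omega)$ to some $\tilde\gamma$. Since $\Gamma$ is discrete in $\PSL(\Cb^{d+1})$, this forces $\tilde\gamma_n = \tilde\gamma \in \Gamma$ for all sufficiently large $n$, and the element $\tilde\gamma$ fixes $q$.

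The main (and only delicate) step is the finale: I need $\tilde\gamma = 1$. The stabilizer $\Gamma_q = \{\eta \in \Gamma : \eta q = q\}$ is a discrete subgroup of the $\Aut(\Omega)$-stabilizer of $q$; since the action is proper, the latter is compact, and hence $\Gamma_q$ is finite. As $\Gamma$ is torsion-free, $\Gamma_q = \{1\}$, so $\tilde\gamma = 1$. But $\tilde\gamma_n$ is conjugate in $\Gamma$ to $\gamma_n \neq 1$, so $\tilde\gamma_n \neq 1$ for every $n$, contradicting $\tilde\gamma_n = \tilde\gamma = 1$ for large $n$. This contradiction completes the proof.
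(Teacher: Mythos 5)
Your proof is correct and follows essentially the same route as the paper: reduce to the compact set $K$ using the $\Gamma$-equivariance of the displacement function, then combine properness of the action with discreteness and torsion-freeness of $\Gamma$. The paper phrases this as a two-line infimum computation (pointwise positivity plus invariance), while you unwind the same compactness argument sequentially; your version has the minor virtue of making explicit the limiting step that the paper leaves implicit.
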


\begin{proof}
Since $\Gamma$ is torsion free, discrete, and acts properly
\begin{align*}
\inf_{\gamma \in \Gamma \setminus \{1\} } d_{\Omega}(\gamma p, p) >0
\end{align*}
for all $p \in \Omega$. Now since $\Gamma$ divides $\Omega$, there exists $K \subset \Omega$ compact such that $\Omega = \cup_{\gamma \in \Gamma} \gamma K$. Then
\begin{align*}
\inf_{p \in \Omega} \inf_{\gamma \in \Gamma \setminus \{1\}} d_{\Omega}(\gamma p,p) = \inf_{p \in K} \inf_{\gamma \in \Gamma \setminus \{1\}} d_{\Omega}(\gamma p,p) >0.
\end{align*}
\end{proof}

\subsection{Boundary behavior} As mentioned in the introduction one nice feature of the Hilbert metric is that the behavior of the metric near the boundary is closely related to the geometry of the boundary.

\begin{proposition}
\label{prop:bd_behav}
Suppose $\Omega$ is a proper linearly convex open set. If $\{p_n\}_{n \in \Nb},\{ q_n\}_{n \in \Nb} \subset \Omega$ are sequences  such that $p_n \rightarrow x \in \partial \Omega$, $q_n \rightarrow y \in \partial \Omega$, and $d_{\Omega}(p_n,q_n) < R$ for some $R>0$ then every complex tangent hyperplane of $\Omega$ containing $x$ also contains $y$.
\end{proposition}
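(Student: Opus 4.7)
Proof plan. I would argue by contradiction: assume $H$ is a complex tangent hyperplane to $\Omega$ at $x$ with $y \notin H$, and produce a specific pair of test functionals in $\Omega^*$ that forces the Hilbert distance $d_\Omega(p_n,q_n)$ to diverge. The definition of the Hilbert metric as a maximum over $\Omega^* \times \Omega^*$ is ideally suited for lower bounds of this kind, so the proof reduces to an appropriate choice of $f, g \in \Omega^*$.

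First I would translate the geometric hypothesis into a functional: write $H = \{[v] : f(v)=0\}$ for some $f \in V^*$. Since $H \cap \Omega = \emptyset$ the functional $f$ is nonvanishing on $\Omega$, hence $[f] \in \Omega^*$; and since $x \in H$ we have $f(x)=0$. Second, I would use Proposition~\ref{prop:zar_den} to find a ``good'' second test functional. The proposition asserts that $\Omega^*$ is not contained in any hyperplane of $\Pb(V^*)$, so the two hyperplanes $\{[g] : g(x)=0\}$ and $\{[g] : g(y)=0\}$ cannot together cover $\Omega^*$; pick $[g] \in \Omega^*$ with $g(x) \neq 0$ and $g(y) \neq 0$.

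With these choices the values at the sequence points are controlled:
\begin{align*}
f(p_n) \to f(x) = 0, \qquad f(q_n) \to f(y) \neq 0, \qquad g(p_n) \to g(x) \neq 0, \qquad g(q_n) \to g(y) \neq 0.
\end{align*}
Now plug into the definition of the Hilbert metric (using $f,g$ and swapping the roles of $v,w$ if needed to pick up the correct asymptotics):
\begin{align*}
d_{\Omega}(p_n,q_n) \geq \log\!\left( \frac{\abs{f(q_n)\,g(p_n)}}{\abs{f(p_n)\,g(q_n)}} \right).
\end{align*}
The numerator converges to $\abs{f(y)g(x)} \neq 0$ while the denominator tends to $0$, so the right-hand side tends to $+\infty$. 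This contradicts the bound $d_\Omega(p_n, q_n) < R$, and therefore $f(y)=0$, i.e. $y \in H$.

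There is no substantive obstacle here; the only thing to be careful about is that Proposition~\ref{prop:zar_den} is really needed (to avoid the degenerate case that every element of $\Omega^*$ vanishes at $x$ or at $y$, which would prevent us from finding a denominator that stays bounded away from zero). Everything else is bookkeeping with the definition of $d_\Omega$ and continuity of the evaluation functionals.
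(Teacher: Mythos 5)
Your argument is correct and is essentially the paper's own proof: both take $f$ cutting out the tangent hyperplane $H$ (so $[f]\in\Omega^*$ with $f(x)=0$) together with a second $g\in\Omega^*$ nonvanishing at $x$ and $y$, and use the lower bound $d_\Omega(p_n,q_n)\geq \log\left(\abs{f(q_n)g(p_n)}/\abs{f(p_n)g(q_n)}\right)$ on normalized representatives to force $f(q_n)\to 0$, hence $y\in H$. The one cosmetic difference is the source of $g$: the paper derives its existence directly from properness of $\Omega$, whereas your appeal to Proposition~\ref{prop:zar_den} is a hair too quick as stated, since a set not contained in any single hyperplane can still lie in the union of two hyperplanes --- the fact you need is true, but it requires the same justification the paper invokes rather than following formally from non-degeneracy of $\Omega^*$.
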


\begin{proof}
Since $\Omega$ is proper there exists $g \in \Omega^*$ such that $g(x) \neq 0$ and $g(y) \neq 0$. If $H$ is a complex tangent hyperplane containing $x$ and $f \in\Pb(V^*)$ is such that $[\ker f]=H$, then $[\ker f] \cap \Omega = H \cap \Omega = \emptyset$. Thus $f \in \Omega^*$ and 
\begin{align*}
R \geq d_{\Omega}( p_n, q_n ) \geq \log \abs{ \frac{f(q_n)}{f(p_n)}}+\log  \abs{\frac{g(p_n)}{g(q_n)}}.
\end{align*}
Let $\hat{p}_n,\hat{q}_n,\hat{x},\hat{y} \in \Cb^{d+1}$ and $\hat{f},\hat{g} \in \Cb^{(d+1)*}$ be representatives of $p_n,q_n,x,y \in \Pb(\Cb^{d+1})$ and $f,g \in \Pb(\Cb^{(d+1)*})$ normalized such that 
\begin{align*}
\norm{\hat{f}}=\norm{\hat{g}}=\norm{\hat{p}_n}=\norm{\hat{q}_n}=\norm{\hat{x}}=\norm{\hat{y}}=1.
\end{align*}
Then 
\begin{align*}
R \geq \log \abs{\frac{\hat{f}(\hat{q}_n)}{\hat{f}(\hat{p}_n)}}+\log \abs{ \frac{\hat{g}(\hat{p}_n)}{\hat{g}(\hat{q}_n)}}.
\end{align*}
Since $f(x)=0$, we see that $\hat{f}(\hat{p}_n) \rightarrow 0$. Since $g(x) \neq 0$ and $g(y) \neq 0$, we see that 
\begin{align*}
\log  \abs{\frac{\hat{g}(\hat{p}_n)}{\hat{g}(\hat{q}_n)}}
\end{align*}
 is bounded from above and below (for $n$ large). Thus we must have that $\hat{f}(\hat{q}_n) \rightarrow 0$ and then we see that $y \in [\ker f]$.
\end{proof}

\subsection{Translation distance} As mentioned in the introduction one nice feature of the Hilbert metric is that it is possible to estimate $d_{\Omega} (\varphi y, y)$ for $\varphi \in \Aut(\Omega)$. 

\begin{proposition}
\label{prop:trans_dist}
Suppose $\Omega$ is a  proper linearly convex open set. If $x_0 \in \Omega$ then there exist $R>0$ depending only on $x_0$ such that 
\begin{align*}
d_{\Omega}(\varphi x_0, x_0) \leq R + \log\left(\norm{\varphi}\norm{\varphi^{-1}}\right)
\end{align*}
for all $\varphi \in \Aut(\Omega)$.
\end{proposition}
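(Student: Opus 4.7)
The plan is to expand the definition \eqref{eq:hil} of $d_\Omega(\varphi x_0, x_0)$ and bound each factor by operator norms of a lift of $\varphi$, where the essential input is a uniform lower bound on $\abs{f(x_0)}$ for $f \in \Omega^*$ obtained from compactness.

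First I would fix an arbitrary norm $\norm{\cdot}$ on $V$, a representative $\hat{x}_0 \in V$ of $x_0$, and for each $f \in \Omega^*$ a representative $\hat{f} \in V^*$. The quantity $\abs{\hat{f}(\hat{x}_0)}/(\norm{\hat{f}}\norm{\hat{x}_0}\,)$ descends to a continuous function on $\Pb(V^*)$, and by the very definition of $\Omega^*$ it is strictly positive on $\Omega^*$. Since $\Omega^*$ is compact (this is the point of working with the complex dual), one obtains
\[
m \;:=\; \inf_{f \in \Omega^*} \frac{\abs{\hat{f}(\hat{x}_0)}}{\norm{\hat{f}}\norm{\hat{x}_0}} \;>\; 0.
\]
This is the only non-routine step; everything else is algebraic manipulation.

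Next, pick any lift $\hat{\varphi} \in \GL(V)$ of $\varphi$. For $f \in \Omega^*$, sub-multiplicativity of the operator norm gives $\abs{\hat{f}(\hat{\varphi}\hat{x}_0)} \leq \norm{\hat{f}}\norm{\hat{\varphi}}\norm{\hat{x}_0}$, so combined with the lower bound from $m$,
\[
\frac{\abs{\hat{f}(\hat{\varphi}\hat{x}_0)}}{\abs{\hat{f}(\hat{x}_0)}} \;\leq\; \frac{\norm{\hat{\varphi}}}{m}.
\]
For the companion ratio with $g \in \Omega^*$, I would use $\norm{\hat{x}_0} = \norm{\hat{\varphi}^{-1}\hat{\varphi}\hat{x}_0} \leq \norm{\hat{\varphi}^{-1}}\norm{\hat{\varphi}\hat{x}_0}$, which bounds $\abs{\hat{g}(\hat{x}_0)} \leq \norm{\hat{g}}\norm{\hat{\varphi}^{-1}}\norm{\hat{\varphi}\hat{x}_0}$ and yields
\[
\frac{\abs{\hat{g}(\hat{x}_0)}}{\abs{\hat{g}(\hat{\varphi}\hat{x}_0)}} \;\leq\; \frac{\norm{\hat{\varphi}^{-1}}}{m}.
\]

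Multiplying these two inequalities, taking logarithms, and taking the maximum over $f,g \in \Omega^*$ in \eqref{eq:hil} produces
\[
d_\Omega(\varphi x_0, x_0) \;\leq\; -2\log m \;+\; \log\!\bigl(\norm{\hat{\varphi}}\,\norm{\hat{\varphi}^{-1}}\bigr).
\]
Finally I would observe that the product $\norm{\hat{\varphi}}\norm{\hat{\varphi}^{-1}}$ is invariant under rescaling of $\hat{\varphi}$, hence depends only on $\varphi \in \PSL(V)$, so setting $R := -2\log m$ (which depends only on $x_0$ and the chosen norm) completes the proof. I do not anticipate a real obstacle; the argument is entirely driven by compactness of $\Omega^*$ and the operator-norm inequality.
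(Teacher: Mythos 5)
Your overall architecture matches the paper's: both proofs rest on compactness of $\Omega^*$ to produce a uniform constant at $x_0$ and on operator-norm estimates to produce the $\log\left(\norm{\varphi}\norm{\varphi^{-1}}\right)$ term. However, there is a genuine gap in your bound for the second ratio. Your constant $m$ is an infimum of $\abs{\hat{f}(\hat{x}_0)}/(\norm{\hat{f}}\norm{\hat{x}_0})$ over $f \in \Omega^*$ \emph{at the fixed point $x_0$ only}. To pass from $\abs{\hat{g}(\hat{x}_0)} \leq \norm{\hat{g}}\norm{\hat{\varphi}^{-1}}\norm{\hat{\varphi}\hat{x}_0}$ to $\abs{\hat{g}(\hat{x}_0)}/\abs{\hat{g}(\hat{\varphi}\hat{x}_0)} \leq \norm{\hat{\varphi}^{-1}}/m$ you must bound the denominator from below by $m\norm{\hat{g}}\norm{\hat{\varphi}\hat{x}_0}$, i.e.\ you are applying $m$ at the point $\varphi x_0$ rather than at $x_0$. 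That inequality is not available from your definition of $m$, and it is false in general: if $g \in \partial \Omega^*$ vanishes at a boundary point of $\Omega$ to which $\varphi_n x_0$ accumulates (which happens for unbounded sequences in $\Aut(\Omega)$, the only case of interest), then $\abs{\hat{g}(\hat{\varphi}_n\hat{x}_0)}/\left(\norm{\hat{g}}\norm{\hat{\varphi}_n\hat{x}_0}\right) \rightarrow 0$. Your first ratio is fine precisely because its denominator is evaluated at $x_0$.

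The missing ingredient --- and the device the paper uses at exactly this step --- is the $\Aut(\Omega)$-invariance of $\Omega^*$: since $\varphi$ preserves $\Omega$, the functional $^t\hat{\varphi}\hat{g}$ defined by $(^t\hat{\varphi}\hat{g})(v) = \hat{g}(\hat{\varphi}v)$ again represents a point of $\Omega^*$, so
\begin{align*}
\abs{\hat{g}(\hat{\varphi}\hat{x}_0)} = \abs{(^t\hat{\varphi}\hat{g})(\hat{x}_0)} \geq m \norm{^t\hat{\varphi}\hat{g}}\norm{\hat{x}_0} \geq \frac{m}{\norm{\hat{\varphi}^{-1}}}\norm{\hat{g}}\norm{\hat{x}_0},
\end{align*}
which combined with $\abs{\hat{g}(\hat{x}_0)} \leq \norm{\hat{g}}\norm{\hat{x}_0}$ does give your claimed bound $\norm{\hat{\varphi}^{-1}}/m$, after which your conclusion $R = -2\log m$ goes through. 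So the final inequality and the constant are correct, but the step as written does not prove them; you need to route the lower bound through the dual action on $\Omega^*$.
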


\begin{proof}
Let 
\begin{align*}
\Lambda = \{ f \in \Cb^{(d+1)*} : \norm{f}=1, [f] \in \Omega^*\}.
\end{align*}
Since $\Omega^*$ is $\Aut(\Omega)$-invariant we see that 
\begin{align*}
^t\varphi f/\norm{^t\varphi f} \in \Lambda
\end{align*}
whenever $f \in \Lambda$ and $\varphi \in \Aut(\Omega)$ . Let $\hat{x}_0 \in \Cb^{d+1}$ as a representative of $x_0 \in \Pb(\Cb^{d+1})$ with norm one. Since $f(x_0) \neq 0$ for all $f \in \Lambda$ and $\Lambda$ is compact,  there exists $C >0$ such that: 
\begin{align*}
-C < \log \abs{f(\hat{x}_0)} < C
\end{align*}
 for all $f \in \Lambda$. Now for $\varphi \in \Aut(\Omega)$
 \begin{align*}
 d_{\Omega}(\varphi x_0, x_0) 
 = \sup_{f,g \in \Lambda} \log \abs{\frac{f(\varphi \hat{x}_0) g(\hat{x}_0)}{f(\hat{x}_0)g(\varphi \hat{x}_0)}} 
  \leq 2C + \sup_{f,g \in \Lambda} \log \abs{\frac{f(\varphi \hat{x}_0)}{g(\varphi \hat{x}_0)}}
 \end{align*}
 and for $f,g \in \Lambda$
 \begin{align*}
\log \abs{\frac{f(\varphi \hat{x}_0)}{g(\varphi \hat{x}_0)}}
&=\log \frac{\norm{ ^t\varphi f}}{\norm{^t\varphi g}}+\log \abs{ \left(\frac{^t\varphi f}{\norm{^t\varphi f}}\right)(\hat{x}_0)}-\log \abs{ \left(\frac{^t\varphi g}{\norm{^t\varphi g}}\right)(\hat{x}_0)} \\
& \leq \log\left(\norm{\varphi}\norm{\varphi^{-1}}\right)+\sup_{f^\prime,g^\prime \in \Lambda} \log \abs{\frac{f^\prime(\hat{x}_0)}{g^\prime(\hat{x}_0)}} \\
& \leq \log\left(\norm{\varphi}\norm{\varphi^{-1}}\right)+2C.
 \end{align*}
Thus 
\begin{align*}
d_{\Omega}(\varphi x_0, x_0) \leq 4C + \log\left(\norm{\varphi}\norm{\varphi^{-1}}\right)
\end{align*}
and the proposition holds with $R:=4C$.
 \end{proof}

\section{Every element is bi-proximal or almost unipotent}\label{sec:bi_prox_or_unip}

For $V$ a complex $(d+1)$-dimensional vector space and $\varphi \in \PSL(V)$ let
\begin{align*}
\sigma_1(\varphi) \leq \sigma_2(\varphi) \leq \dots \leq \sigma_{d+1}(\varphi)
\end{align*}
be the absolute value of the eigenvalues (counted with multiplicity) of $\varphi$. Since we are considering absolute values this is well defined. 

\begin{definition} \
\begin{enumerate}
\item An element $\varphi \in \PSL(V)$ is called \emph{proximal} if $\sigma_{d}(\varphi) < \sigma_{d+1}(\varphi)$ and is called \emph{bi-proximal} if $\varphi$ and $\varphi^{-1}$ are proximal. 
\item An element $\varphi \in \PSL(V)$ is called \emph{almost unipotent} if 
\begin{align*}
\sigma_1(\varphi) = \sigma_2(\varphi) = \dots = \sigma_{d+1}(\varphi)=1.
\end{align*}
\end{enumerate}
\end{definition}

When $\varphi$ is bi-proximal let $x^+_{\varphi}$ and $x^-_{\varphi}$ be the eigenlines in $\Pb(V)$ corresponding to $\sigma_{d+1}(\varphi)$ and $\sigma_1(\varphi)$.  
The purpose of this section is to prove the following.

\begin{theorem}
\label{thm:bi_prox}
Suppose $\Omega$ is a proper $\Cb$-convex open set with $C^1$ boundary. If $\Gamma \leq \PSL(\Cb^{d+1})$ divides $\Omega$ then every $\gamma \in \Gamma \setminus \{1\}$ is bi-proximal or almost unipotent. Moreover if $\varphi \in \Aut(\Omega)$ is bi-proximal then 
\begin{enumerate}
\item $x_{\varphi}^+,x_{\varphi}^- \in \partial \Omega$, 
\item $T^{\Cb}_{x_{\varphi}^+} \partial \Omega \cap \partial \Omega = \{ x_{\varphi}^+\}$,
\item $T^{\Cb}_{x_{\varphi}^-} \partial \Omega \cap \partial \Omega = \{ x_{\varphi}^-\}$, and
\item if $U^+ \subset \overline{\Omega}$ is a neighborhood of $x^+_{\varphi}$ and $U^- \subset \overline{ \Omega}$ is a neighborhood of $x^-_{\varphi}$ then there exists $N>0$ such that for all $m > N$ we have
\begin{align*}
\varphi^m(\partial \Omega \setminus U^-) \subset U^+ \text{ and } \varphi^{-m}(\partial \Omega \setminus U^+) \subset U^-.
\end{align*}
\end{enumerate}
\end{theorem}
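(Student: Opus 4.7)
The plan is to first establish properties (1)--(4) for an arbitrary bi-proximal $\varphi \in \Aut(\Omega)$, and then to deduce the dichotomy for nontrivial $\gamma \in \Gamma$. Throughout the moreover part I write $x^\pm = x^\pm_\varphi$ and decompose $V = V^+ \oplus V^0 \oplus V^-$, where $V^\pm$ are the one-dimensional attracting and repelling eigenlines of $\varphi$ (so $x^\pm = [V^\pm]$) and $V^0$ is the sum of the remaining generalized eigenspaces. Property (1) is immediate: for $y \in \Omega \setminus [V^0 \oplus V^-]$ the iterates $\varphi^n y$ converge projectively to $x^+$, so $x^+ \in \overline{\Omega}$; bi-proximality forces $\varphi$ to have infinite order ($\sigma_{d+1}(\varphi) > 1$), and Proposition~\ref{prop:proper} together with Lemma~\ref{lem:same_topo} then rule out $x^+ \in \Omega$. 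Applying the same argument to $\varphi^{-1}$ gives $x^- \in \partial \Omega$.

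For (2) and (3) the aim is to identify $T^{\Cb}_{x^+}\partial \Omega = [V^+ \oplus V^0]$ and $T^{\Cb}_{x^-}\partial \Omega = [V^0 \oplus V^-]$. The tangent hyperplane at $x^+$ is $\varphi$-invariant by uniqueness at a $C^1$ boundary point, and among the $\varphi$-invariant complex hyperplanes through $x^+$ the candidate $[V^+ \oplus V^0]$ is the one disjoint from $\Omega$. The analytic crux is therefore $[V^+ \oplus V^0] \cap \Omega = \emptyset$: for a hypothetical $y \in [V^+ \oplus V^0] \cap \Omega$, the $\varphi^{-n}$-orbit of $y$ stays inside this invariant hyperplane, and the (1)-style argument applied to the restriction of $\varphi^{-1}$ to the proper $\Cb$-convex open set $\Omega \cap [V^+ \oplus V^0]$ (whose top absolute value $\sigma_2(\varphi)^{-1}$ is realized inside $[V^0]$) contradicts the proper action. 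Granted $[V^+ \oplus V^0] \cap \Omega = \emptyset$, the Observation and a dimension count give $T^{\Cb}_{x^+}\partial \Omega = [V^+ \oplus V^0]$. The strictness $T^{\Cb}_{x^+}\partial \Omega \cap \partial \Omega = \{x^+\}$ in (2) then follows by a further dynamical argument inside the invariant hyperplane $T^{\Cb}_{x^+}\partial\Omega$: any additional boundary point $z \neq x^+$ would have its $\varphi^n$-orbit contained in $T^{\Cb}_{x^+}\partial\Omega \cap \partial \Omega$, and combining approach sequences in $\Omega$ tending to $x^+$ with Proposition~\ref{prop:bd_behav} yields a contradiction. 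Property (3) is analogous via $\varphi^{-1}$, and (4) is then standard projective north--south dynamics: $\varphi^n$ converges uniformly to the constant map $x^+$ on compact subsets of $\Pb(V) \setminus [V^0 \oplus V^-]$, and (3) guarantees that $\partial \Omega \setminus U^-$ is disjoint from $[V^0 \oplus V^-]$ and hence contained in a compactum of $\Pb(V) \setminus [V^0 \oplus V^-]$; uniform convergence sends it into any prescribed $U^+$ for all large exponents, and $\varphi^{-1}$ yields the symmetric inclusion.

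For the dichotomy, Corollary~\ref{cor:fin_gen} reduces us to $\Gamma$ torsion-free, so every nontrivial $\gamma$ has infinite order. Fix $\gamma \in \Gamma \setminus \{1\}$ not almost unipotent; then $\sigma_{d+1}(\gamma) > 1 > \sigma_1(\gamma)$, and by the $\gamma \leftrightarrow \gamma^{-1}$ symmetry it suffices to show $\gamma$ is proximal. Suppose for contradiction $\sigma_d(\gamma) = \sigma_{d+1}(\gamma)$, and let $V^+_\gamma$ be the sum of generalized eigenspaces of $\gamma$ for eigenvalues of absolute value $\sigma_{d+1}(\gamma)$, so that $\dim V^+_\gamma \geq 2$. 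The scaled iterates $\gamma^n / \sigma_{d+1}(\gamma)^n$ are bounded on $V^+_\gamma$, so the image of $\gamma$ in $\PSL([V^+_\gamma])$ generates a relatively compact subgroup; arguing as in (1), the proper action rules out $[V^+_\gamma] \cap \Omega \neq \emptyset$, while the accumulation of $\gamma^n$-orbits forces $[V^+_\gamma] \cap \partial \Omega \neq \emptyset$, and the Observation then places $[V^+_\gamma]$ inside the complex tangent hyperplane at each of its boundary points. The contradiction is to be produced by exploiting cocompactness: the translates $\gamma'[V^+_\gamma]$ for $\gamma' \in \Gamma$ supply many positive-dimensional projective flats in $\partial \Omega$, each confined to a tangent hyperplane, and Proposition~\ref{prop:bd_behav} combined with the quasi-geodesic Hilbert geometry of $\Omega$ makes this incompatible with the cocompact action. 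This last step is the main obstacle of the theorem, and will require a careful interplay of $C^1$ boundary regularity, Proposition~\ref{prop:bd_behav}, and the \v{S}varc--Milnor quasi-isometry of Theorem~\ref{thm:fundgeomgp}.
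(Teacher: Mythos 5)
The central assertion of the theorem --- that every element with $\sigma_{d+1} > \sigma_1$ is actually bi-proximal --- is not proved in your proposal. You reduce to showing proximality, introduce the sum $V^+_\gamma$ of generalized eigenspaces for the top-modulus eigenvalues, show $[V^+_\gamma]$ avoids $\Omega$ and meets $\partial \Omega$, and then write that the contradiction ``is to be produced'' from cocompactness, translates of flats, and Proposition~\ref{prop:bd_behav}, calling this ``the main obstacle of the theorem.'' That step is exactly where the content lies, and it is left as a plan rather than an argument. Two further problems with the setup: if $\gamma|_{V^+_\gamma}$ has a nontrivial Jordan block then $\gamma^n/\sigma_{d+1}(\gamma)^n$ is unbounded on $V^+_\gamma$, so the claimed relative compactness in $\PSL([V^+_\gamma])$ fails unless you first cut down to eigenvectors in maximal Jordan blocks; and cocompactness cannot be the right mechanism anyway, since the dichotomy (like the ``moreover'' clause) holds for arbitrary elements of $\Aut(\Omega)$ with no dividing group present. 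The paper's proof works in the dual: for $\varphi \in \Aut(\Omega)$ with unbounded powers, $[E^+({}^t\varphi)] \cap \Omega^*$ is nonempty because $\Omega^*$ is compact, ${}^t\varphi$-invariant, and by Proposition~\ref{prop:zar_den} not contained in a hyperplane; it is a \emph{single} point $f^+$ because every such functional must cut out the unique complex tangent hyperplane at a point of $[E^-(\varphi)] \cap \partial\Omega$ (this is where $C^1$ enters); and a short linear-algebra argument with the normalized limit of ${}^t\varphi^{n}$ then forces $\dimension_{\Cb} E^+({}^t\varphi) = 1$, hence $\dimension_{\Cb} E^+(\varphi) = 1$, from which bi-proximality follows once $\sigma_{d+1} > \sigma_1$.

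There is also a gap in your identification $T^{\Cb}_{x^+}\partial\Omega = [V^+ \oplus V^0]$. From a hypothetical $y \in \Omega \cap [V^+ \oplus V^0]$ you claim a contradiction with the proper action, but the $\varphi^{-n}$-orbit of such a $y$ merely accumulates on $\partial\Omega \cap [V^0]$, which is perfectly consistent with properness; the recurrence argument only excludes the attracting set itself (inside $[V^0]$) from meeting $\Omega$, not the whole hyperplane. In the paper this is again handled by duality: the functionals $f^\mp = E^\mp({}^t\varphi)$ lie in $\Omega^*$, so $[\ker f^\mp] \cap \Omega = \emptyset$ is automatic, and $[\ker f^\mp] = T^{\Cb}_{x^\pm}\partial\Omega$ then follows from uniqueness of tangent hyperplanes at a $C^1$ boundary point (Lemma~\ref{lem:weak_attract}). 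Your arguments for (1) and (4) are fine as sketched, and your reading of part (3) as the input to the north--south dynamics in (4) matches the paper.
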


\begin{remark} Notice that in the second part of theorem we allow $\varphi$ to be any bi-proximal element in $\Aut(\Omega)$. 
\end{remark}

Given an element $\varphi \in \SL(V)$ let $m^+(\varphi)$ be the size of the largest Jordan block of $\varphi$ whose corresponding eigenvalue has absolute value $\sigma_{d+1}(\varphi)$. Next let $E^+(\varphi)$ be the span of the eigenvectors of $\varphi$ whose eigenvalue have absolute value $\sigma_{d+1}(\varphi)$ and are part of a Jordan block with size $m^+(\varphi)$. Also define $E^-(\varphi) = E^+(\varphi^{-1})$.

Given $y \in \Pb(V)$ let $L(\varphi,y) \subset \Pb(V)$ denote the limit points of the sequence $\{\varphi^n y\}_{n \in \Nb}$. With this notation we have the following observations:

\begin{proposition}
\label{prop:attracting}
Suppose $\varphi \in \SL(V)$ and $\{ \varphi^n\}_{ n \in \Nb} \subset \SL(V)$ is unbounded, then
\begin{enumerate}
\item there exists a proper projective subspace $H \subsetneq \Pb(V)$ such that $L(\varphi,y) \subset [E^+(\varphi)]$ for all $y \in \Pb(V) \setminus H$,
\item $\varphi$ acts recurrently on $[E^+(\varphi)] \subset \Pb(V)$, that is for all $y \in [E^+(\varphi)]$ there exists $n_k \rightarrow \infty$ such that $\varphi^{n_k}y \rightarrow y$,
\item $E^-(\varphi) \subset \ker f$ for all $f \in E^+(^t\varphi)$.
\end{enumerate}
\end{proposition}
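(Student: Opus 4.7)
The plan is to reduce everything to explicit computation in a Jordan basis. On each Jordan block of $\varphi$ write $\varphi = \lambda I + N$ with $N$ the standard nilpotent shift and expand $\varphi^n = \sum_{j=0}^{m-1} \binom{n}{j} \lambda^{n-j} N^j$ on a block of size $m$ with eigenvalue $\lambda$. Set $\sigma := \sigma_{d+1}(\varphi)$ and $m^+ := m^+(\varphi)$; then $\norm{\varphi^n} \asymp \sigma^n \binom{n}{m^+-1}$, with this top growth rate attained precisely on the ``top'' Jordan blocks, meaning those of size $m^+$ with $\abs{\lambda} = \sigma$. Fix a Jordan basis $\{e_1^{(i)}, \dots, e_{m^+}^{(i)}\}$ for each top block (ordered so that $\varphi e_k^{(i)} = \lambda_i e_k^{(i)} + e_{k-1}^{(i)}$ and $\varphi e_1^{(i)} = \lambda_i e_1^{(i)}$), and write $V = U \oplus U'$ with $U$ spanned by the top blocks and $U'$ the $\varphi$-invariant complement formed from the remaining Jordan blocks.

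For part (1), let $W \subset V$ be the subspace of vectors whose $e_{m^+}^{(i)}$-coefficient vanishes for every top block $i$, and set $H := [W]$; this is a proper projective subspace, as at least one top block exists. For $y \in \Pb(V) \setminus H$ with representative $\hat y$, the binomial expansion shows that the normalized vector $\varphi^n \hat y / (\sigma^n \binom{n}{m^+-1})$ is bounded in $V$, its $U'$-component tends to zero (subdominant blocks grow strictly slower), and its $U$-component reduces to $\sum_i a_{i, m^+} \mu_i^n \lambda_i^{-(m^+-1)} e_1^{(i)}$, where $a_{i, m^+}$ is the $e_{m^+}^{(i)}$-coefficient of $\hat y$ and $\mu_i := \lambda_i/\sigma$. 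Every subsequential limit of this normalized sequence therefore lies in $E^+(\varphi)$ (the span of the $e_1^{(i)}$), and is nonzero because some $a_{i, m^+} \neq 0$ and the $e_1^{(i)}$ are linearly independent. Consequently $L(\varphi, y) \subset [E^+(\varphi)]$.

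For part (2), any $y \in [E^+(\varphi)]$ has a representative $\hat y = \sum_i c_i e_1^{(i)}$, so $\varphi^n \hat y = \sigma^n \sum_i c_i \mu_i^n e_1^{(i)}$, and in $\Pb(V)$ the $\sigma^n$ factor drops. It suffices to produce integers $n_k \to +\infty$ with $\mu_i^{n_k} \to 1$ for every $i$ simultaneously. The closure of $\{(\mu_1^n, \dots, \mu_r^n) : n \in \Zb\}$ inside $(S^1)^r$ is a closed subgroup, hence contains the identity, which yields the desired sequence (with $n_k \to +\infty$ arranged by passing to the positive part or using $\overline{\mu_i^{n_k}} = \mu_i^{-n_k}$).

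For part (3), a direct dual-basis computation shows that the eigenvector of ${}^t\varphi$ in the $i$-th top block is the ``last'' functional $e_{m^+}^{\ast (i)}$, characterized by $e_{m^+}^{\ast (i)}(e_k^{(j)}) = \delta_{ij}\delta_{k, m^+}$. A vector in $E^-(\varphi)$ is a linear combination of eigenvectors $e_1^{(j)}$ of $\varphi$ supported on ``bottom'' blocks (those of maximal size among blocks with $\abs{\lambda_j} = \sigma_1(\varphi)$), so $e_{m^+}^{\ast (i)}$ kills $E^-(\varphi)$ unless some bottom block coincides with a top block and $m^+ = 1$. To exclude this degenerate case, observe that a top block being a bottom block forces $\sigma = \sigma_1(\varphi)$, i.e.\ every eigenvalue of $\varphi$ has modulus $\sigma$, and then $m^+ = 1$ makes $\varphi$ diagonalizable; together with $\det \varphi = 1$ this gives $\sigma = 1$ and makes $\{\varphi^n\}$ bounded, contrary to hypothesis. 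The main bookkeeping obstacle lies in part (1), namely verifying that subsequential limits of the normalized iterates land not merely in $U$ but in the smaller subspace $E^+(\varphi)$ when several top blocks with potentially distinct eigenvalues of common modulus $\sigma$ are present.
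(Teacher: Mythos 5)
Your proof is correct and follows the same route as the paper, whose entire argument for this proposition is the single sentence that all three statements follow from the Jordan normal form; you have simply carried out that computation in full (identification of the top blocks, normalization by $\sigma^n\binom{n}{m^+-1}$, recurrence of unimodular rotations on the torus, and the dual-basis pairing for part (3)). The degenerate case you rule out at the end of part (3) and the multi-block bookkeeping in part (1) are handled correctly, so no gaps remain.
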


\begin{proof} All three statements follow easily once $\varphi$ is written in Jordan normal form.
\end{proof}

\begin{lemma}
\label{lem:weak_attract}
Suppose $\Omega$ is a proper $\Cb$-convex open set with $C^1$ boundary and $\varphi \in \Aut(\Omega)$ such that $\{\varphi^n \}_{n \in \Nb} \subset \PSL(\Cb^{d+1})$ is unbounded. Then $E^\pm(\varphi) = x^\pm$ for some $x^\pm \in \partial \Omega$ and $E^\pm(^t\varphi) = f^\pm$ for some $f^\pm\in \Omega^*$. Moreover $T_{x^\pm}^{\Cb} \partial \Omega = [\ker f^\mp]$.
\end{lemma}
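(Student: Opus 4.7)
The plan is to extract the attracting point $x^+$ as an accumulation point of a generic $\varphi$-orbit in $\Omega$, to extract dually $f^-$ as an accumulation point of a ${}^t\varphi^{-1}$-orbit in $\Omega^*$, and then to identify the two tangent hyperplanes via the observation (just above) that any projective subspace containing a boundary point but avoiding $\Omega$ must lie in the complex tangent hyperplane there. The main obstacle will be showing that these attracting subspaces are literally one-dimensional rather than merely containing the distinguished points $x^\pm$ and $f^\pm$.

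First I will locate $x^+$. Apply Proposition~\ref{prop:attracting}(1) to $\varphi$ to obtain a proper projective subspace $H \subsetneq \Pb(\Cb^{d+1})$ such that every orbit starting outside $H$ has all accumulation points in $[E^+(\varphi)]$. Pick $y \in \Omega \setminus H$ (possible since $\Omega$ is open). Because $\{\varphi^n\}$ is unbounded in $\PSL(\Cb^{d+1})$, Proposition~\ref{prop:proper} prevents $\varphi^{n_k} y$ from accumulating at any point of $\Omega$ along a subsequence witnessing the unboundedness, so some subsequential limit lies in $\partial\Omega$. Combined with Proposition~\ref{prop:attracting}(1), this limit lies in $[E^+(\varphi)] \cap \partial\Omega$; call it $x^+$.

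To locate $f^-$ I apply the analogous input to the dual action. Proposition~\ref{prop:attracting}(1) applied to ${}^t\varphi^{-1}$ yields a proper subspace $H^{**} \subsetneq \Pb((\Cb^{d+1})^*)$ such that accumulation points of $({}^t\varphi^{-1})^n g$ lie in $[E^+({}^t\varphi^{-1})] = [E^-({}^t\varphi)]$ whenever $g \notin H^{**}$. Proposition~\ref{prop:zar_den} guarantees that $\Omega^*$ is not contained in a hyperplane, so $\Omega^* \setminus H^{**} \neq \emptyset$; picking $g$ there and using that $\Omega^*$ is closed and ${}^t\varphi^{-1}$-invariant, a subsequential limit of $({}^t\varphi^{-1})^n g$ gives an element $f^- \in [E^-({}^t\varphi)] \cap \Omega^*$. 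Proposition~\ref{prop:attracting}(3), applied with $\varphi$ replaced by $\varphi^{-1}$, then gives $E^+(\varphi) = E^-(\varphi^{-1}) \subset \ker f$ for every $f \in E^+({}^t\varphi^{-1}) = E^-({}^t\varphi)$, and in particular $[E^+(\varphi)] \subset [\ker f^-]$. Since $f^- \in \Omega^*$ the hyperplane $[\ker f^-]$ avoids $\Omega$; since it also contains $x^+ \in \partial\Omega$, the observation forces $[\ker f^-] \subset T_{x^+}^{\Cb}\partial\Omega$, and because both sides are complex hyperplanes they must coincide. Exchanging the roles of $\varphi$ and $\varphi^{-1}$ produces the symmetric pair $x^-, f^+$ together with the identity $T_{x^-}^{\Cb}\partial\Omega = [\ker f^+]$.

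The delicate remaining claim is that $\dim E^+(\varphi) = \dim E^-(\varphi) = \dim E^+({}^t\varphi) = \dim E^-({}^t\varphi) = 1$, so that the containments above become the asserted equalities. Suppose for contradiction $\dim E^+(\varphi) \geq 2$. Then $[E^+(\varphi)]$ is a positive-dimensional projective subspace contained in $T_{x^+}^{\Cb}\partial\Omega$, hence disjoint from $\Omega$. The restriction $\varphi|_{E^+(\varphi)}$ is diagonalizable with all eigenvalues of modulus $\sigma_{d+1}(\varphi)$, so projectively its iterates on $[E^+(\varphi)]$ form a relatively compact family; Proposition~\ref{prop:attracting}(2) then shows every point of $[E^+(\varphi)]$ is $\varphi$-recurrent. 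The plan is to transport this recurrence back to $\Omega$ itself — either by extracting from $\{\varphi^n\}$ a further subsequence that converges in $\PSL(\Cb^{d+1})$ and so contradicts the unboundedness hypothesis via Proposition~\ref{prop:proper}, or by combining Proposition~\ref{prop:bd_behav} with the duality above to conclude that too many points of $\partial\Omega$ share the single hyperplane $[\ker f^-]$ as their complex tangent. Effecting this transport is the main obstacle; once it is in place, the symmetric arguments for $E^-(\varphi)$ and $E^\pm({}^t\varphi)$ finish the proof.
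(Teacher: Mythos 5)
Your first three steps are sound and track the paper's own Claims 1--3: producing $x^+ \in [E^+(\varphi)]\cap\partial\Omega$ from an orbit in $\Omega$, producing $f^- \in [E^-({}^t\varphi)]\cap\Omega^*$ from a dual orbit using Proposition~\ref{prop:zar_den}, and identifying $[\ker f^-]$ with $T^{\Cb}_{x^+}\partial\Omega$ via Proposition~\ref{prop:attracting}(3) and the $C^1$ hypothesis. But the step you yourself flag as ``the main obstacle'' --- that $E^{\pm}(\varphi)$ and $E^{\pm}({}^t\varphi)$ are one-dimensional --- is precisely the content of the lemma beyond what you have shown, and neither of your proposed routes closes it. The first route fails outright: the projective recurrence of $\varphi$ on $[E^+(\varphi)]$ holds for \emph{every} $\varphi$ with unbounded powers (including bi-proximal ones, where $[E^+(\varphi)]$ is a single recurrent fixed point), so it cannot be parlayed into a convergent subsequence of $\{\varphi^n\}$ in $\PSL(\Cb^{d+1})$ or any contradiction with unboundedness. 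The second route is not developed, and the natural attempt --- arguing that a positive-dimensional $[E^+(\varphi)]$ forces many boundary points to share the tangent hyperplane $[\ker f^-]$ --- stalls because a positive-dimensional projective subspace disjoint from $\Omega$ could a priori meet $\partial\Omega$ in a single point; ruling that out amounts to a strict-convexity statement that the paper only proves later, using this very lemma.

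The paper closes the gap on the dual side, where Proposition~\ref{prop:zar_den} gives leverage. First, every $f \in [E^+({}^t\varphi)]\cap\Omega^*$ has $[\ker f]$ tangent to $\partial\Omega$ at a point $x \in [E^-(\varphi)]\cap\partial\Omega$ (by Proposition~\ref{prop:attracting}(3)); since $\partial\Omega$ is $C^1$ the tangent hyperplane at $x$ is unique, so $[E^+({}^t\varphi)]\cap\Omega^*$ is a single point $f^+$. Then one normalizes representatives $\hat\varphi_{n}$ of ${}^t\varphi^{n}$ to converge along a subsequence to an endomorphism $\hat\varphi_\infty$ with image $E^+({}^t\varphi)$; the set $W$ of vectors sent into the line $f^+$ satisfies $\dimension_{\Cb} W = d+2-\dimension_{\Cb}E^+({}^t\varphi)$, so if $\dimension_{\Cb}E^+({}^t\varphi)>1$ then $[W]$ is a proper subspace, Proposition~\ref{prop:zar_den} supplies $g\in\Omega^*\setminus[W]$, and $\hat\varphi_\infty g$ is a point of $[E^+({}^t\varphi)]\cap\Omega^*$ other than $f^+$ --- a contradiction. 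One-dimensionality of $E^+(\varphi)$ itself then follows because $\varphi$ and ${}^t\varphi$ have the same Jordan block structure. Without an argument of this kind your proof is incomplete.
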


\begin{proof} We will break the proof of the lemma into a series of claims. \vspace*{5pt}

\noindent \textbf{Claim 1:} \textit{$[E^+(^t\varphi)] \cap \Omega^*$ is non-empty.} \vspace*{5pt}

By part (1) of Proposition~\ref{prop:attracting} there exists a hyperplane $H \subset \Pb(V^*)$  such that $L(^t\varphi, f) \subset [E^+(^t\varphi)]$ for all $f \in \Pb(V^*)\setminus H$. By Proposition~\ref{prop:zar_den}, $\Omega^*$ is not contained in a hyperplane and so there exists $f \in \Omega^* \setminus \Omega^* \cap H$. Then as $\Omega^*$ is compact and $^t\varphi$-invariant, $L(^t\varphi, f) \subset \Omega^*$ and thus $[E^+(^t\varphi)] \cap \Omega^* \neq \emptyset$. \vspace*{5pt}

\noindent \textbf{Claim 2:} \textit{$[E^-(\varphi)] \cap \Omega = \emptyset$ and $[E^-(\varphi)] \cap \partial \Omega \neq \emptyset$. In particular, since $\partial \Omega$ is $C^1$ if $x \in[E^-(\varphi)] \cap \partial\Omega$ then $[E^-(\varphi)] \subset T_x^{\Cb} \partial \Omega$. }\vspace*{5pt}

By Proposition~\ref{prop:proper}, $\Aut(\Omega)$ acts properly on $\Omega$ and hence for any $y \in \Omega$ the set
\begin{align*}
\{ n \in \Nb : d_{\Omega}(\varphi^{-n} y,y) \leq 1\}
\end{align*}
is finite. So $[E^-(\varphi)] \cap \Omega = \emptyset$ by part (2) of Proposition~\ref{prop:attracting}. Since $\Omega$ is open,  part (1) of Proposition~\ref{prop:attracting} implies the existence of some  $y \in \Omega$ such that  $L(\varphi^{-1}, y) \subset [E^-(\varphi)]$.  Since $\Omega$ in $\varphi$-invariant $L(\varphi^{-1},y) \subset \overline{\Omega}$. Thus  $[E^-(\varphi)] \cap \overline{\Omega} \neq \emptyset$. \vspace*{5pt}

\noindent \textbf{Claim 3:}  \textit{$\{ f^+ \} = [E^+(^t\varphi)] \cap \Omega^*$ for some $f^+ \in \Pb(V^*)$ and $[\ker f^+] = T_x^{\Cb} \partial \Omega$ for any $x \in [E^-(\varphi)] \cap \partial \Omega$. }\vspace*{5pt}

Suppose $f \in [E^+(^t\varphi)] \cap \Omega^*$ then by part (3) of Proposition~\ref{prop:attracting}, $E^-(\varphi) \subset \ker f$ and by the definition of $\Omega^*$, $[\ker f] \cap \Omega = \emptyset$. Thus if $x \in[E^-(\varphi)] \cap \partial \Omega$ then $[\ker f]$ is a complex tangent hyperplane of $\Omega$ at $x$. Since $\partial \Omega$ is $C^1$ this implies that $[\ker f] = T_x^{\Cb} \partial \Omega$.  As $f \in [E^+(^t\varphi)] \cap \Omega^*$ was arbitrary this implies the claim.\vspace*{5pt}

\noindent \textbf{Claim 4:} \textit{$f^+ = E^+(^t\varphi)$ for some $f^+ \in \Omega^*$.}\vspace*{5pt}

Pick representatives $\hat{\varphi}_n \in \GL(V^*)$ of $^t\varphi^{n} \in \PSL(V^*)$ such that $\norm{\hat{\varphi}_n}=1$. Then there exists $n_k \rightarrow \infty$ such that $\hat{\varphi}_{n_k}$ converges to a linear endomorphism $\hat{\varphi}_\infty \in \End(V^*)$.  By construction $\hat{\varphi}_{\infty}(g) \in L(^t\varphi,g)$ for any $g \in \Pb(V^*) \setminus [\ker \hat{\varphi}_{\infty}]$. Also by using the Jordan normal form one can check that $\hat{\varphi}_\infty(V^*)=E^+(^t\varphi)$. Select $f \in \Pb(V^*)$ such that $\hat{\varphi}_{\infty}(f)=f^+$. Then viewing $f$ and $f^+$ as complex one dimensional subspaces of $V^*$ we see that 
\begin{align*}
W:=\{ v \in V^* : \hat{\varphi}_{\infty}(v) \in f^+\} = f + \ker \hat{\varphi}_{\infty}.
\end{align*}
Notice that 
\begin{align*}
\dimension_{\Cb} W = 1 + \dimension_{\Cb} \ker \hat{\varphi}_{\infty} = d+2-\dimension_{\Cb} E^+(^t\varphi).
\end{align*}
Finally assume for a contradiction that $\dimension_{\Cb} E^+(^t\varphi) > 1$. In this case $[W]$ is a proper projective subspace of $\Pb(V^*)$. By Proposition~\ref{prop:zar_den}, $\Omega^*$ is not contained in a hyperplane and thus there exists $g \in \Omega^* \setminus \Omega^* \cap [W]$. Since $\ker \hat{\varphi}_{\infty} \subset W$, $g \notin [\ker \hat{\varphi}_{\infty}]$ and so $\hat{\varphi}_{\infty}g$ is well defined in $\Pb(V^*)$. As $\hat{\varphi}_{\infty}(V^*) = E^+(^t\varphi)$ we then have that $\hat{\varphi}_{\infty}g \in [E^+(^t\varphi)]$. Since $\Omega^*$ is compact and $^t\varphi$-invariant, $\hat{\varphi}_{\infty}g \in L(^t\varphi,g) \subset \Omega^*$.
Thus by Claim 3 we must have that $\hat{\varphi}_{\infty}g = f^+$. But this contradicts the fact that $g \notin [W]$. So we have a contradiction and so $E^+(^t\varphi)$ must be a one complex dimensional subspace.\vspace*{5pt}

\noindent \textbf{Claim 5:} \textit{$x^+ = E^+(\varphi)$ for some $x^+ \in \partial\Omega$.}\vspace*{5pt}

The property of $E^+(\varphi)$ having dimension one depends only on the Jordan block structure of $\varphi$. As $\varphi$ and $^t\varphi$ have the same Jordan block structure Claim 4 implies that $E^+(\varphi) = x^+$ for some $x^+ \in \Pb(\Cb^{d+1})$. By repeating the argument in the proof of Claim 2 we see that $x^+ \in \partial \Omega$. 

\vspace*{5pt} \noindent \textbf{Claim 6:} \textit{Lemma~\ref{lem:weak_attract} is true. }\vspace*{5pt}

Summarizing our conclusions so far: we have that $E^+(\varphi)=x^+$ for some $x^+\in \partial \Omega$, $E^+(^t\varphi)=f^+$ for some $f^+ \in \Omega^*$, and $[\ker f^+]$ is a complex tangent hyperplane of $\Omega$ containing $[E^-(\varphi)]$. Thus applying the above argument to $\varphi^{-1}$ we see that $E^-(\varphi)=x^-$ for some $x^- \in \partial \Omega$, $E^-(^t\varphi)=f^-$ for some $f^- \in \Omega^*$, and $[\ker f^-]$ is a complex tangent hyperplane of $\Omega$ containing $[E^+(\varphi)]$. Since $E^\pm(\varphi) = x^\pm$ we see that $[\ker f^\mp]$ is a complex tangent hyperplane containing $x^\pm$ and thus $T_{x^\pm}^{\Cb} \partial \Omega = [\ker f^{\mp}]$.
\end{proof}

Since each $\varphi \in \SL(V)$ is either almost unipotent or $\sigma_{d+1}(\varphi) > \sigma_1(\varphi)$, Theorem~\ref{thm:bi_prox} will follow from the next lemma.

\begin{lemma}
Suppose $\Omega$ is a proper $\Cb$-convex open set with $C^1$ boundary and $\varphi \in \Aut(\Omega)$ is such that $\sigma_{d+1}(\varphi ) > \sigma_1(\varphi )$. Then $\varphi $ is bi-proximal and
\begin{enumerate}
\item $T^{\Cb}_{x_{\varphi }^+} \partial \Omega \cap \partial \Omega = \{ x_{\varphi }^+\}$, 
\item $T^{\Cb}_{x_{\varphi }^-} \partial \Omega \cap \partial \Omega = \{ x_{\varphi }^-\}$,
\item if $U^+ \subset \overline{\Omega}$ is a neighborhood of $x^+_{\varphi }$ and $U^- \subset \overline{\Omega}$ is a neighborhood of $x^-_{\varphi }$ then there exists $N>0$ such that for all $m > N$ we have
\begin{align*}
\varphi ^m(\partial \Omega \setminus U^-) \subset U^+ \text{ and } \varphi ^{-m}(\partial \Omega \setminus U^+) \subset U^-.
\end{align*}
\end{enumerate}
\end{lemma}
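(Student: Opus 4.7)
Since $\sigma_{d+1}(\varphi) > \sigma_1(\varphi)$, the iterates $\{\varphi^n\}$ are unbounded in $\PSL(\Cb^{d+1})$, so Lemma~\ref{lem:weak_attract} applies and yields $x^\pm = E^\pm(\varphi) \in \partial\Omega$, $f^\pm = E^\pm({}^t\varphi) \in \Omega^*$, and the tangent-hyperplane identities $T^{\Cb}_{x^\pm}\partial\Omega = [\ker f^{\mp}]$. The plan is to first establish parts (1) and (2), then deduce bi-proximality as a consequence of the Jordan form analysis of $\varphi$, and finally derive the North-South dynamics in part (3).

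To prove (1) and (2) it suffices, by applying the same argument to $\varphi^{-1}$, to show that $T^{\Cb}_{x^-}\partial\Omega \cap \partial\Omega = \{x^-\}$. Working in a Jordan basis $v_1,\ldots,v_m$ for the unique maximal Jordan block of the top-modulus eigenvalue $\lambda_1$, one computes $x^+ = [v_1]$ and $f^+ = [v_m^*]$, so that $f^+(x^+) = v_m^*(v_1) = 0$ if and only if $m = m^+(\varphi) \geq 2$; the analogous statement holds for $f^-(x^-)$ and $m^-(\varphi)$. I would first rule out the mixed case. If $m^+(\varphi) \geq 2$, then $x^+ \in T^{\Cb}_{x^-}\partial\Omega$, so the complex projective line $L$ joining $x^+$ to $x^-$ lies in $T^{\Cb}_{x^-}\partial\Omega$, whence $L \cap \Omega = \emptyset$. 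Since $x^+ \in L \cap \partial\Omega$, the Observation from Section~\ref{sec:prelim} forces $L \subset T^{\Cb}_{x^+}\partial\Omega$, and in particular $x^- \in T^{\Cb}_{x^+}\partial\Omega$, i.e., $m^-(\varphi) \geq 2$ as well. The converse implication is symmetric, so either $m^\pm(\varphi) = 1$ or both $m^\pm(\varphi)\geq 2$.

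The main obstacle is ruling out the remaining case $m^+(\varphi), m^-(\varphi) \geq 2$. For this I would exploit the $\varphi$-invariant complex projective line $L' = [\langle v_1, v_2\rangle]$, on which $\varphi$ acts as a unipotent M\"obius transformation with unique fixed point $x^+$. Since $f^-$ annihilates both $v_1$ and $v_2$, we have $L' \subset T^{\Cb}_{x^+}\partial\Omega$, so $L' \cap \Omega = \emptyset$, and therefore $L' \cap \overline{\Omega} = L' \cap \partial\Omega$ is a proper closed $\varphi$-invariant subset of $L' \cong \Pb^1$ containing $x^+$. Combining the unipotent dynamics on $L'$ with the Observation (which forces $L' \subset T^{\Cb}_z\partial\Omega$ for every $z \in L' \cap \partial\Omega$), the continuity of the tangent-hyperplane map furnished by the $C^1$ hypothesis, and an analogue of the dimension argument in the proof of Claim~4 of Lemma~\ref{lem:weak_attract} (applied through the dual to force $\Omega^*$ into a proper projective subspace, contradicting Proposition~\ref{prop:zar_den}), one arrives at a contradiction with the assumption $m^+(\varphi) \geq 2$.

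Once $m^+(\varphi) = m^-(\varphi) = 1$ is established, combined with $E^\pm(\varphi)$ being one-dimensional, the top- and bottom-modulus eigenvalues of $\varphi$ are forced to be simple roots of the characteristic polynomial, yielding $\sigma_d(\varphi) < \sigma_{d+1}(\varphi)$ and $\sigma_1(\varphi) < \sigma_2(\varphi)$, i.e., $\varphi$ is bi-proximal; moreover $f^\pm(x^\pm) \neq 0$, which is enough to confirm (1) and (2). For (3), the identity $T^{\Cb}_{x^-}\partial\Omega = [\ker f^+]$ combined with (1) gives $[\ker f^+] \cap \overline{\Omega} = \{x^-\}$; but the ``bad'' hyperplane for forward dynamics in Proposition~\ref{prop:attracting}(1) may, from the Jordan form, be taken to be exactly $[\ker f^+]$. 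Hence every $y \in \overline{\Omega}\setminus\{x^-\}$ lies outside this hyperplane and so $\varphi^n y \to x^+$. A standard compactness argument on the compact set $\partial\Omega \setminus U^-$ upgrades this pointwise convergence to the uniform statement $\varphi^m(\partial\Omega \setminus U^-) \subset U^+$ for all sufficiently large $m$; the assertion for $\varphi^{-m}$ is symmetric.
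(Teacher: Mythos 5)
Your reduction to Lemma~\ref{lem:weak_attract} and your observation that $f^{\pm}(x^{\pm})=0$ exactly when $m^{\pm}(\varphi)\geq 2$ are both correct, but the proposal has two genuine gaps. The first is that the case you call the main obstacle ($m^{+}(\varphi)\geq 2$ and $m^{-}(\varphi)\geq 2$) is never actually proved: you list the ingredients you would combine (unipotent dynamics on $L'$, continuity of the tangent hyperplane, ``an analogue of the dimension argument\dots applied through the dual'') and assert that a contradiction results, but no argument is given, and this is precisely the heart of the bi-proximality claim. Moreover the whole case analysis is unnecessary, and the mechanism you already use in your ``mixed case'' disposes of it directly: since $\sigma_{d+1}(\varphi)>\sigma_{1}(\varphi)$, $f^{+}$ and $f^{-}$ are eigenlines of $^t\varphi$ for eigenvalues of different moduli, so $f^{+}\neq f^{-}$; if $x^{+}\in[\ker f^{+}]$, then $[\ker f^{+}]$ is a complex tangent hyperplane at $x^{+}$ (it contains $x^{+}$ and misses $\Omega$ because $f^{+}\in\Omega^{*}$), so $C^1$-uniqueness of tangent hyperplanes forces $[\ker f^{+}]=T^{\Cb}_{x^{+}}\partial\Omega=[\ker f^{-}]$, a contradiction. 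This is exactly what the paper does, in two lines, and it makes $m^{\pm}(\varphi)=1$ immediate.

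The second gap is the claim that $f^{\pm}(x^{\pm})\neq 0$ ``is enough to confirm (1) and (2).'' It is not: $f^{+}(x^{+})\neq 0$ only says $x^{+}\notin T^{\Cb}_{x^{-}}\partial\Omega$, whereas (1) asserts that $T^{\Cb}_{x^{+}}\partial\Omega=[\ker f^{-}]$ meets $\partial\Omega$ in the single point $x^{+}$. To get this one must take an arbitrary $y\in[\ker f^{-}]\cap\partial\Omega$, first rule out $y\in[\ker f^{+}]$ (again by $C^1$-uniqueness, since a point of $[\ker f^{+}]\cap[\ker f^{-}]\cap\partial\Omega$ would have two distinct tangent hyperplanes), and then use the dynamics of $\varphi^{-n}$ inside the invariant hyperplane $[\ker f^{-}]$ to push any $y\neq x^{+}$ into $[\ker f^{+}]\cap[\ker f^{-}]\cap\partial\Omega$, which is empty. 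Your argument for part (3) via Proposition~\ref{prop:attracting} and compactness is fine, but it takes (1) and (2) as input, and those have not been established.
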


\begin{proof}
Let $E^{\pm}(^t\varphi) = f^\pm \in \Omega^*$  and $E^\pm(\varphi)=x^\pm \in \partial \Omega$. By the previous lemma $[\ker f^{\pm} ]= T^{\Cb}_{x^\mp} \partial \Omega$. Since $\sigma_{d+1}(\varphi ) > \sigma_1(\varphi )$ we see that $E^+(^t\varphi) \neq E^-(^t\varphi)$ and so $f^+ \neq f^-$. Since $\partial \Omega$ is $C^1$, $x^+$ is contained in unique complex tangent hyperplane and so $x^+ \notin [\ker f^+]$. For the same reason $x^- \notin [\ker f^-]$. 

Then there exists a basis $e_1, e_2, \dots, e_{d+1}$ of $\Cb^{d+1}$ such that $\Cb\cdot e_1 = x^+$, $\Cb \cdot e_2 = x^-$, and $\ker f^+ \cap \ker f^-$ is the span of $e_3,\dots, e_{d+1}$. Since $x^+$, $x^-$, and $\ker f^+ \cap \ker f^-$ are $\varphi$-invariant, with respect to this basis $\varphi$ is represented by a matrix of the form
\begin{align*}
\begin{pmatrix}
\lambda^+ & 0 & 0 \\
0 & \lambda^- & 0 \\
0 & 0 & A \\
\end{pmatrix} \in \SL(\Cb^{d+1})
\end{align*}
where $A$ is some $(d-1)$-by-$(d-1)$ matrix. Finally since $E^+(\varphi) = x^+$ and $E^-(\varphi)=x^-$ we see that $\varphi$ is bi-proximal. 

We now show part (1) of the lemma, that is $[\ker f^-] \cap \partial \Omega = \{ x^+\}$. If $x \in [\ker f^-] \cap \partial \Omega$ then with respect to the basis above $x = [ w_1 : 0 : w_2 : \dots : w_d]$ for some $w_1,\dots,w_d \in \Cb$. If $w_1=0$ then $x \in [\ker f^+]$. But then $[\ker f^+]$ and $[\ker f^-]$ are complex tangent hyperplanes to $\partial \Omega$ at $x$. Since $\partial \Omega$ is $C^1$ this implies that $[\ker f^+]=T_x^{\Cb}\partial\Omega =[\ker f^-]$ which contradicts the fact that $f^+$ and $f^-$ are distinct points in $\Pb(V^*)$. So $w_1 \neq 0$, but then either $x=x^+$ or any limit point of $\{\varphi^{-n} x\}_{n \in \Nb}$ is in $[\ker f^+] \cap [\ker f^-] \cap \partial \Omega$ which we just showed is empty. So $[\ker f^-] \cap \partial \Omega = \{ x^+\}$. A similar argument shows that $[\ker f^+] \cap \partial \Omega = \{ x^-\}$.

By part (2) of the lemma, with respect to the basis above 
\begin{align*}
\overline{\Omega} \setminus \{ x^-\} \subset \{ [1:z_1:\dots z_d] : z_1,\dots,z_d \in \Cb\}.
\end{align*}
Thus for all $x \in \overline{\Omega}$ either $x=x^-$ or $\varphi^m x \rightarrow x^+$ as $m \rightarrow \infty$. In a similar fashion, for all $x \in \overline{\Omega}$ either $x=x^+$ or $\varphi^m x \rightarrow x^-$ as $m \rightarrow -\infty$. Thus part (3) holds. 
\end{proof}

\section{$\Gamma$ contains a bi-proximal element}\label{sec:exist_bi}

The purpose of this section is to prove the following.

\begin{theorem}
\label{thm:exist_bi}
Suppose $\Omega$ is a proper $\Cb$-convex open set with $C^1$ boundary. If $\Gamma \leq \PSL(\Cb^{d+1})$ divides $\Omega$ then some $\gamma \in \Gamma \setminus \{1\}$  is bi-proximal.
\end{theorem}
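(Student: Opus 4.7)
The plan is to argue by contradiction: assume every $\gamma \in \Gamma \setminus \{1\}$ is almost unipotent, and construct a bi-proximal element via a Tits-style ping-pong argument. By Corollary~\ref{cor:fin_gen} I may replace $\Gamma$ with a torsion-free finite-index subgroup. Then every $\gamma \in \Gamma \setminus \{1\}$ has infinite order, and by properness of the action (Proposition~\ref{prop:proper}) the sequence $\{\gamma^n\}_{n \in \Nb}$ is unbounded in $\PSL(\Cb^{d+1})$.

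Lemma~\ref{lem:weak_attract} then yields $E^\pm(\gamma) = x_\gamma^\pm \in \partial\Omega$ and $E^\pm({}^t\gamma) = f_\gamma^\pm \in \Omega^*$ with $T_{x_\gamma^\pm}^{\Cb}\partial\Omega = [\ker f_\gamma^\mp]$. Since $\gamma$ is almost unipotent every eigenvalue lies on the unit circle, and $\gamma, \gamma^{-1}$ share the same set of eigenvectors and the same Jordan block sizes; hence $E^+(\gamma) = E^+(\gamma^{-1}) = E^-(\gamma)$, and similarly on the dual side. This collapses the data to a single fixed point $x_\gamma \in \partial\Omega$ and a single functional $f_\gamma \in \Omega^*$ with $T_{x_\gamma}^{\Cb}\partial\Omega = [\ker f_\gamma]$. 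Writing $\gamma$ in Jordan form (and identifying the exceptional subspace in Proposition~\ref{prop:attracting}(1) with $[\ker f_\gamma]$) gives the attracting dynamics $\gamma^n y \to x_\gamma$ for every $y \in \Pb(\Cb^{d+1}) \setminus T_{x_\gamma}^{\Cb}\partial\Omega$.

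Next I want a general-position pair $\gamma_1, \gamma_2 \in \Gamma \setminus \{1\}$ with $x_{\gamma_i} \notin T_{x_{\gamma_j}}^{\Cb}\partial\Omega$ for $i \neq j$. Fix $\gamma_1$ and set $H_1 := T_{x_{\gamma_1}}^{\Cb}\partial\Omega$, and consider the $\Gamma$-orbit $\Gamma \cdot x_{\gamma_1} = \{x_{g\gamma_1 g^{-1}} : g \in \Gamma\}$. If this orbit were contained in $H_1$, then $\Gamma$ would preserve the complex-linear span of the orbit, a proper projective subspace $W \subsetneq \Pb(\Cb^{d+1})$; cocompactness of the $\Gamma$-action on the open set $\Omega$ rules this out by a standard openness/reducibility argument (the complement of $W$ meets $\Omega$, and limits of $\Gamma$-translates of any interior point would fail to populate $\partial\Omega$ as required by Proposition~\ref{prop:bd_behav}). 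So some $g \in \Gamma$ gives $x_{g\gamma_1 g^{-1}} = g \cdot x_{\gamma_1} \notin H_1$; setting $\gamma_2 := g \gamma_1 g^{-1}$ gives $x_{\gamma_2} \notin H_1$, and a symmetric further conjugation arranges $x_{\gamma_1} \notin T_{x_{\gamma_2}}^{\Cb}\partial\Omega$. With the pair in hand, set $\eta_n := \gamma_1^n \gamma_2^n$: for $y$ outside $H_2 := T_{x_{\gamma_2}}^{\Cb}\partial\Omega$, $\gamma_2^n y \to x_{\gamma_2} \notin H_1$, so $\eta_n y \to x_{\gamma_1}$; dually $\eta_n^{-1} y \to x_{\gamma_2}$ for $y \notin H_1$. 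Mimicking the classical $\SL_2(\Cb)$ calculation---two distinct parabolics whose product is loxodromic with eigenvalue gap of order $n^2$---an explicit Jordan-form computation in bases adapted to the flags $(x_{\gamma_i}, H_i)$ shows $\sigma_{d+1}(\eta_n)/\sigma_1(\eta_n) \to \infty$. Hence $\eta_n \in \Gamma$ is bi-proximal for large $n$, the desired contradiction.

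The hardest step will be the final Jordan-form estimate: certifying that two almost unipotent elements with general-position fixed points yield a product whose top and bottom eigenvalue moduli spread apart polynomially in $n$, generalizing the $\SL_2$ parabolic-parabolic calculation to arbitrary dimension. The transversality of the flags $(x_{\gamma_i}, H_i)$ will be the key geometric ingredient driving the eigenvalue separation; a secondary technical point is the openness/reducibility argument supporting the existence of the general-position pair.
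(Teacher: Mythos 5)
Your strategy (a Tits-style ping-pong producing a proximal product of two almost unipotent elements) is genuinely different from the paper's, which is a growth-rate argument: the paper shows (Proposition~\ref{prop:Ugp}, Proposition~\ref{prop:growth}) that a group all of whose elements are almost unipotent has Zariski closure conjugate into a ``compact-by-unipotent'' group, so products of $N$ generators have norm $O(N^d)$; by Proposition~\ref{prop:trans_dist} the displacement $d_{\Omega}(\gamma_{i_N}\cdots\gamma_{i_1}x_0,x_0)$ then grows only like $\log N$, contradicting the linear lower bound coming from the \v{S}varc--Milnor quasi-isometry along a word-metric geodesic. Your route avoids the algebraic input from \cite{P1993} entirely, which is appealing, but as written it has two genuine gaps.

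First, the proximality certificate. The facts you extract --- $\eta_n y \to x_{\gamma_1}$ for generic $y$ and $\eta_n^{-1}y \to x_{\gamma_2}$ --- do \emph{not} imply that $\eta_n$ is bi-proximal for large $n$: the powers $u^n$ of a single unipotent element exhibit exactly this convergence behavior (toward $E^+(u)=E^-(u)$) while every $u^n$ is unipotent. The point is that for an almost unipotent element the attracting fixed point $x_\gamma$ \emph{lies inside} its own exceptional hyperplane $T_{x_\gamma}^{\Cb}\partial\Omega$, so convergence of orbits to a point is compatible with all eigenvalues having modulus one. What actually certifies proximality of $\eta_n$ is that its attracting point (near $x_{\gamma_1}$) stays uniformly away from its repelling hyperplane (which converges to $H_2$), so that $\eta_n$ maps a fixed compact neighborhood $\overline{U}$ of $x_{\gamma_1}$ with $\overline{U}\cap H_2=\emptyset$ into $U$ \emph{as a uniform contraction}; this is the step that forces an attracting fixed point and hence $\sigma_{d+1}(\eta_n)>\sigma_d(\eta_n)$. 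Your proposed substitute --- ``an explicit Jordan-form computation'' of the eigenvalue moduli of $\gamma_1^n\gamma_2^n$ --- does not generalize from the $\SL(\Rb^2)$ parabolic calculation: eigenvalues of a product are not controlled by the Jordan forms of the factors, and ``almost unipotent'' allows elliptic parts that destroy any such formula. Second, the general-position step is unsupported. If $\Gamma\cdot x_{\gamma_1}\subset H_1:=T^{\Cb}_{x_{\gamma_1}}\partial\Omega$, then since $\partial\Omega$ is $C^1$ and $H_1$ is tangent at every point of $\Gamma\cdot x_{\gamma_1}$, the group $\Gamma$ preserves the hyperplane $H_1$ itself. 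Ruling this out is not a ``standard openness/reducibility argument'': a dividing group preserving a tangent hyperplane does occur for non-strictly-convex real divisible sets (e.g.\ the triangle), and the tool the paper uses to exclude such configurations --- $T^{\Cb}_{x^+_\gamma}\partial\Omega\cap\partial\Omega=\{x^+_\gamma\}$ from Theorem~\ref{thm:bi_prox} --- is only available for \emph{bi-proximal} $\gamma$, which is exactly what you are assuming does not exist. Proposition~\ref{prop:bd_behav} does not obviously close this hole. Until both steps are repaired, the proof is incomplete.
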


\begin{remark}
Using Proposition~\ref{prop:trans_dist}, if $\varphi \in \Aut(\Omega)$ is almost unipotent and $x_0 \in \Omega$ then we have an estimate of the form:
\begin{align}
\label{eq:abcd}
d_{\Omega}(\varphi^N x_0,x_0) \leq R + \log \left(\norm{\varphi^N}\norm{\varphi^{-N}}\right) \leq A+B\log(N).
\end{align}
In particular, if we knew that every non-trivial element of $\Gamma$ is an ``axial isometry'' then we would immediately deduce that every non-trivial element of $\Gamma$ is bi-proximal. Unfortunately, we do not see a direct way of establishing that every non-trivial element of $\Gamma$ is an ``axial isometry.'' \end{remark}

We will start with a definition, but first let $\SL^*(\Cb^{d+1}) = \{ \varphi \in \GL(\Cb^{d+1}) : \abs{\det \varphi }=1\}$.

\begin{definition}
A connected closed Lie subgroup $G \leq \SL^*(\Cb^{d+1})$ is called \emph{almost unipotent} if there exists a flag 
\begin{align*}
\{0\} = V_0 \subsetneq V_1 \subsetneq  \dots \subsetneq V_k \subsetneq V_{k+1} = \Cb^{d+1}
\end{align*}
preserved by $G$ such that if $G_{i+1} \leq \GL(V_{i+1}/V_i)$ is the projection of $G$ then $G_{i+1}$ is bounded. 
\end{definition} 

\begin{remark}
Notice that a group $G$ is unipotent if and only if there exists a complete flag 
\begin{align*}
\{0\} = V_0 \subsetneq V_1 \subsetneq  \dots \subsetneq V_{d} \subsetneq V_{d+1} = \Cb^{d+1}
\end{align*}
preserved by $G$ such that if $G_{i+1} \leq \GL(V_{i+1}/V_i)$ is the projection of $G$ then $G_{i+1}=\{1\}$.
\end{remark}

The proof of Theorem~\ref{thm:exist_bi} will use the next two propositions.

\begin{proposition}
\label{prop:Ugp}
Suppose $\Gamma \leq \SL(\Cb^{d+1})$ is a subgroup such that every $\gamma \in \Gamma$ is almost unipotent. Let $G$ be the Zariski closure of $\Gamma$ in $\SL(\Rb^{2d+2})$. If $G$ is connected, then $G$ is almost unipotent.
\end{proposition}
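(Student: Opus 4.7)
The plan is to proceed by induction on $d+1$, with the one-dimensional base case being trivial. For the inductive step, observe that $\Gamma \subset \GL_{\Cb}(\Cb^{d+1})$, the centralizer of the complex structure $J$ in $\GL_{\Rb}(\Rb^{2d+2})$, which is $\Rb$-Zariski closed; hence the Zariski closure $G$ also lies in $\GL_{\Cb}(\Cb^{d+1})$, and in $\SL^*(\Cb^{d+1})$. I will split into two cases depending on whether $G$ acts $\Cb$-irreducibly on $\Cb^{d+1}$.

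If $G$ preserves a nontrivial proper complex subspace $W \subsetneq \Cb^{d+1}$, I will apply the inductive hypothesis to the images of $\Gamma$ in $\GL_{\Cb}(W)$ and in $\GL_{\Cb}(\Cb^{d+1}/W)$. Every element of these images is almost unipotent, since its eigenvalues form a subset of the eigenvalues of the corresponding element of $\Gamma$; and the Zariski closures of these images are the (connected) images of $G$. Lifting and concatenating the flags produced by induction then yields a $G$-invariant flag in $\Cb^{d+1}$ whose successive quotient projections are bounded.

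The remaining case, where $G$ acts $\Cb$-irreducibly on $\Cb^{d+1}$, is the crux: I claim $G$ itself is compact, so that the trivial flag $0 \subsetneq \Cb^{d+1}$ exhibits $G$ as almost unipotent. By Engel--Kolchin the unipotent radical $R_u(G)$ has a nonzero fixed subspace, which is $G$-invariant (as $R_u(G)$ is normal in $G$) and complex (as $R_u(G) \subset \GL_{\Cb}$); by irreducibility it equals $\Cb^{d+1}$, so $R_u(G) = \{1\}$ and $G$ is reductive. Writing $G$ as an almost direct product of its connected center $Z = Z(G)^0$ and semisimple part $H = [G,G]^0$, Schur's lemma produces a character $\chi : Z \to \Cb^*$ describing the scalar action of $Z$; since $G \subset \SL^*(\Cb^{d+1})$, one has $\abs{\chi}^{d+1} \equiv 1$, so $Z$ acts by scalars of modulus one. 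Because $Z$ acts by scalars, $H$-invariant complex subspaces coincide with $G$-invariant ones, so $H$ also acts $\Cb$-irreducibly, and the projection of $\Gamma$ to $G/Z$ (which is isogenous to $H$) is Zariski dense and consists of almost unipotent elements.

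The main obstacle, and the only step using input beyond elementary representation theory, is showing that $H$ must be compact. Here I will invoke a density theorem in the style of Benoist--Labourie--Prasad: a Zariski dense subgroup of a connected real semisimple algebraic group of positive $\Rb$-rank contains an $\Rb$-regular element, whose Jordan decomposition has a nontrivial $\Rb$-split semisimple component and hence produces an eigenvalue of absolute value distinct from one in any faithful representation. Since every element of $\Gamma$ is almost unipotent, $H$ must have $\Rb$-rank zero, and is therefore compact (anisotropic). Combined with the $S^1$-valued scalar action of $Z$, this forces the image of $G$ in $\GL_{\Cb}(\Cb^{d+1})$ to be compact, completing the induction.
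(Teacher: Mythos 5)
Your proof is correct and takes essentially the same route as the paper's: induction on dimension via a $G$-invariant flag, with the crux being that a Zariski-dense subgroup of a reductive real group of positive $\Rb$-rank contains an $\Rb$-regular element whose nontrivial hyperbolic part contradicts almost unipotence --- exactly the input from Prasad \cite{P1993} that the paper's key lemma packages. The only organizational difference is that you split on $\Cb$-irreducibility rather than reductivity (the paper instead uses the fixed space of a nontrivial normal unipotent subgroup, via Engel, as the invariant subspace in the non-reductive case, and quotes compactness of the reductive case as a lemma), which is a cosmetic reshuffling of the same ingredients.
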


\begin{proposition}
\label{prop:growth}
Suppose $G \leq \SL(\Cb^{d+1})$ is a connected closed Lie subgroup. If $G$ is almost unipotent and $g_1, \dots, g_k$ are fixed elements in $G$ then there exists a constant $C>0$ such that for all $N>0$ and $i_1, \dots, i_N \in \{1,\dots,k\}$
\begin{align*}
\norm{g_{i_1} \cdots g_{i_N}} \leq CN^d.
\end{align*}
\end{proposition}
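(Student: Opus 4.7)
The plan is to work in a basis adapted to the flag, expand $g_{i_1} \cdots g_{i_N}$ blockwise, and bound each block directly.

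Fix a Hermitian inner product on $\Cb^{d+1}$ together with a basis whose initial subspans realize the $G$-invariant flag $\{0\} = V_0 \subsetneq V_1 \subsetneq \cdots \subsetneq V_{\ell+1} = \Cb^{d+1}$ (I use $\ell$ to avoid clashing with the $k$ in the statement of the proposition). With respect to this basis every element of $G$ is block upper triangular; write $g = (g_{ab})_{1 \leq a \leq b \leq \ell+1}$, so that $g \mapsto g_{aa}$ realizes the quotient homomorphism $\rho_a : G \to G_a \leq \GL(V_a/V_{a-1})$. Since $G_a$ is a bounded \emph{subgroup} of $\GL(V_a/V_{a-1})$, any product of elements of $G_a$ remains in $G_a$ and is therefore bounded by a constant independent of the length of the product. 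Taking the maximum over the finitely many indices $a$ produces a constant $M_0 \geq 1$ such that every product of diagonal blocks of elements of $G$ at a single level has norm at most $M_0$. Also choose $M \geq 1$ with $\norm{g_j} \leq M$ for $j = 1, \dots, k$; then every block entry satisfies $\norm{g_{j,ab}} \leq M$.

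Block-matrix multiplication gives
\begin{align*}
(g_{i_1} \cdots g_{i_N})_{ab} = \sum_{a = c_0 \leq c_1 \leq \cdots \leq c_N = b} g_{i_1, c_0 c_1}\, g_{i_2, c_1 c_2}\, \cdots\, g_{i_N, c_{N-1} c_N}.
\end{align*}
Fix a chain $a = c_0 \leq c_1 \leq \cdots \leq c_N = b$ and let $m$ be the number of positions $\alpha$ at which $c_{\alpha-1} < c_\alpha$. Since each strict jump contributes at least one to the telescoping total $b - a$, we have $m \leq b - a \leq \ell \leq d$. The jumps partition the ordered factors into $m$ off-diagonal ``transition'' factors (each of norm at most $M$) and $m+1$ intermediate runs of diagonal factors at a single block index, each of which is a product in some $G_c$ and hence of norm at most $M_0$. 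Consequently every summand above has norm at most $M_0^{d+1} M^d$.

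It remains to count the chains. A standard stars-and-bars count gives $\binom{N-1+b-a}{b-a}$ nondecreasing sequences from $a$ to $b$ of length $N+1$, which for fixed $b-a \leq d$ is bounded by a universal constant multiple of $N^{b-a} \leq N^d$. Multiplying by the summand bound yields $\norm{(g_{i_1} \cdots g_{i_N})_{ab}} \leq C' N^d$ for a constant $C'$ depending only on $G$ and the $g_j$'s, and summing over the finitely many block positions $(a,b)$ produces $\norm{g_{i_1} \cdots g_{i_N}} \leq C N^d$ as desired. The only conceptual input is the first paragraph's observation that the diagonal subproducts remain bounded by virtue of each $G_a$ being simultaneously a subgroup and bounded; without this, any naive factor-by-factor estimate would give an exponential bound.
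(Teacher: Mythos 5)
Your proof is correct, and it takes a genuinely different route from the paper's. The paper first invokes a structural fact: after conjugation an almost unipotent group satisfies $G \leq KU$ with $K \leq \SU(d+1)$ compact normalizing a unipotent upper triangular group $U$; it then uses $\norm{kuk^{-1}} = \norm{u}$ to push all the compact factors to one side and reduces to a scalar-entry claim about products of $N$ unipotent upper triangular matrices with bounded entries, which it proves by the same expand-and-count argument you use (paths $i = a_0 \leq \cdots \leq a_N = j$, at most $d$ non-unit factors per path, $O(N^d)$ paths). You bypass the $KU$ decomposition entirely and work block-wise in a basis adapted to the flag from the definition of almost unipotent; the role played in the paper by ``diagonal entries equal to $1$'' is played in your argument by the observation that a run of consecutive diagonal blocks at level $c$ equals $\rho_c(g_{i_\alpha}\cdots g_{i_\beta})$, an element of the bounded group $G_c$, hence is bounded independently of the run's length --- exactly the point you flag as the conceptual crux. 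What your approach buys is self-containedness: it needs no conjugation into $\SU(d+1)\cdot U$ and no comparison between the sup-entry norm and the operator norm, only submultiplicativity and the stars-and-bars count. What the paper's approach buys is that after the reduction the combinatorics happens at the level of scalar matrix entries rather than operator blocks, which is marginally more concrete. Both yield the same exponent $d$, and both counting steps are the same binomial estimate.
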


Delaying the proof of the propositions we will prove Theorem~\ref{thm:exist_bi}.

\begin{proof}[Proof of Theorem~\ref{thm:exist_bi}]
Suppose for a contradiction that $\Gamma$ contains no bi-proximal elements, then by Theorem~\ref{thm:bi_prox} every element of $\Gamma$ is almost unipotent. If $\pi:\SL(\Cb^{d+1})\rightarrow \PSL(\Cb^{d+1})$ is the natural projection, then there exists a finite index subgroup $\Gamma^{\prime} \leq \pi^{-1}(\Gamma)$ such that the Zariski closure of $\Gamma^{\prime}$ is connected and $\Gamma^\prime$ is torsion free. Since $\Gamma^\prime$ is torsion free $\pi$ induces an isomorphism $\Gamma^\prime \rightarrow \pi(\Gamma^\prime)$ and by construction $\pi(\Gamma^\prime) \leq \Gamma$ will have finite index. Then $\pi(\Gamma^\prime)$ divides $\Omega$ and hence by Theorem~\ref{thm:quasi_geod}, Proposition~\ref{prop:proper}, and Theorem~\ref{thm:fundgeomgp} $\Gamma^\prime$ is finitely generated. Now fix a finite generating set $S=\{s_1,\dots,s_k\} \subset \Gamma^\prime$ and a point $x_0 \in \Omega$. By Proposition~\ref{thm:fundgeomgp} there exists $A,B >0$ such that the map $\gamma \in \Gamma^\prime \rightarrow \gamma \cdot x_0$ is an $(A,B)$-quasi-isometry between $(\Gamma^\prime, d_S)$ and $(\Omega,d_{\Omega})$. 

Since $\Gamma^\prime$ has infinite order, there exists a sequence $i_1, i_2, \dots \in \{1,\dots, k\}$ such that the map 
\begin{align*}
N \in \Nb \rightarrow \gamma_{i_N} \cdots \gamma_{i_2}\gamma_{i_1}
\end{align*}
is a geodesic with respect to the word metric, that is 
\begin{align*}
d_S(\gamma_{i_N} \gamma_{i_{N-1}} \cdots \gamma_{i_1}, 1) = N
\end{align*}
for all $N>0$. Then 
\begin{align}
\label{eq:est}
\frac{1}{A}N-B \leq d_{\Omega}(\gamma_{i_N} \gamma_{i_{N-1}} \cdots \gamma_{i_1} x_0, x_0) \leq AN+B
\end{align}
for all $N >0$. 

Since the Zariski closure of $\Gamma^\prime$ is connected, Proposition~\ref{prop:Ugp} and Proposition~\ref{prop:growth} implies the existence of $C>0$  such that 
\begin{align*}
\norm{\gamma_{i_1} \cdots \gamma_{i_N}} \leq CN^d \text{ and } \norm{\gamma_{i_N}^{-1} \cdots \gamma_{i_1}^{-1}} \leq CN^d
\end{align*}
for all $N>0$. 

Then Proposition~\ref{prop:trans_dist} implies that:
\begin{align*}
d_{\Omega}(\gamma_{i_N} \cdots \gamma_{i_1} x_0, x_0) \leq R + \log\left(\norm{\gamma_{i_N} \cdots \gamma_{i_1} }\norm{\gamma_{i_1}^{-1} \cdots \gamma_{i_N}^{-1} }\right) \leq  R + 2\log CN^d
\end{align*}
for some $R >0$ depending only on $x_0$. This contradicts the estimate in equation~(\ref{eq:est}) and hence $\Gamma$ must contain a bi-proximal element.
\end{proof}

We begin the proof of Proposition~\ref{prop:Ugp} with a lemma that follows easily from the main result in~\cite{P1993}:

\begin{lemma}\cite{P1993}
Suppose $\Gamma \leq \SL(\Cb^{d+1})$ is a subgroup such that every $\gamma \in \Gamma$ is almost unipotent. Let $G$ be the Zariski closure of $\Gamma$ in $\SL(\Rb^{2d+2})$. If $G$ is connected and reductive, then $G$ is compact.
\end{lemma}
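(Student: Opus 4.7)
The plan is to derive a contradiction from the assumption that $G$ is non-compact, invoking the main theorem of \cite{P1993}: every Zariski dense subgroup of a connected semisimple real algebraic group contains an $\Rb$-regular element. Since $G$ is connected and reductive, I would decompose $G = Z^0 \cdot G^{\mathrm{ss}}$, where $Z^0 = Z(G)^0$ is the identity component of the center (a real algebraic torus) and $G^{\mathrm{ss}} = [G,G]$ is the semisimple derived subgroup. Non-compactness of $G$ forces at least one of $Z^0$ or $G^{\mathrm{ss}}$ to be non-compact, and these two cases require separate treatment.

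In the semisimple case, let $\pi : G \to G^{\mathrm{ad}} := G/Z(G)$ be the quotient, so $G^{\mathrm{ad}}$ is isomorphic to the adjoint form of $G^{\mathrm{ss}}$. The image $\pi(\Gamma)$ is Zariski dense in $G^{\mathrm{ad}}$. If $G^{\mathrm{ss}}$ is non-compact, so is $G^{\mathrm{ad}}$, and Prasad's theorem produces $\gamma \in \Gamma$ whose image $\pi(\gamma)$ is $\Rb$-regular. Consider the real Jordan decomposition $\gamma = \gamma_e \gamma_h \gamma_u$ inside $G \leq \SL(\Rb^{2d+2})$; since the real Jordan decomposition is preserved by the surjective algebraic homomorphism $\pi$, the image $\pi(\gamma_h)$ is the hyperbolic part of $\pi(\gamma)$, which is non-trivial by $\Rb$-regularity. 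Hence $\gamma_h$ is not central in $G$, and since $G \hookrightarrow \SL(\Rb^{2d+2})$ is faithful and $\gamma_h$ has positive real eigenvalues (by definition of the hyperbolic part of the real Jordan decomposition), $\gamma_h$ must act with some eigenvalue $\neq 1$ on $\Rb^{2d+2}$. The absolute values of eigenvalues of $\gamma$ coincide with the eigenvalues of $\gamma_h$ (because $\gamma_e$ is elliptic and $\gamma_u$ unipotent), giving an eigenvalue of $\gamma$ of absolute value $\neq 1$ --- contradicting that $\gamma$ is almost unipotent.

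For the central case, $Z^0$ contains a non-trivial $\Rb$-split subtorus $S$, and faithfulness of $G \hookrightarrow \SL(\Rb^{2d+2})$ ensures that $S$ acts on $\Rb^{2d+2}$ with at least one non-trivial (real-valued) weight $\chi$. The projection of $\Gamma$ to the real algebraic quotient torus $G/G^{\mathrm{ss}}$ is Zariski dense, and the image of $S$ remains a non-trivial $\Rb$-split subtorus there; Zariski density then forces some element of $\Gamma$ to have a non-trivial projection along $S$. Exploiting that $S$ acts by real scalars on each $G^{\mathrm{ss}}$-isotypic component of $\Rb^{2d+2}$, I would convert this into an element of $\Gamma$ possessing an eigenvalue of absolute value $\neq 1$. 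The main obstacle is precisely this transfer back through the quotient $G \to G/G^{\mathrm{ss}}$: the projection loses information about the semisimple factor, and one must carefully choose a sufficiently generic element of $\Gamma$ (as guaranteed by Zariski density) so that its true hyperbolic component in $G$ genuinely has a non-trivial $S$-part and not merely a projection that is a hyperbolic element ``up to $G^{\mathrm{ss}}$.''
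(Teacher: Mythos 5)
Your overall strategy---reduce everything to Prasad's theorem on $\Rb$-regular elements in Zariski-dense subsemigroups---is exactly what the paper intends: the paper offers no argument beyond the remark that the lemma ``follows easily from the main result in~\cite{P1993},'' so the details you are supplying are precisely what a reader must fill in. Your semisimple case is correct and complete: the image of $\Gamma$ in the adjoint quotient is Zariski dense, an $\Rb$-regular element of a non-compact semisimple group has non-trivial hyperbolic part, the Jordan decomposition is compatible with the quotient morphism, and the eigenvalue moduli of $\gamma$ on $\Rb^{2d+2}$ are the eigenvalues of $\gamma_h$, so such a $\gamma$ cannot be almost unipotent.

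The gap is the central-torus case, which you flag as an ``obstacle'' but do not close, and the route you sketch---choosing a generic $\gamma \in \Gamma$ whose hyperbolic part has a genuine $S$-component---is the wrong tool: Zariski density does not directly control the Jordan decomposition of individual elements, and you do not need it to. The step closes with a character argument. Let $S \leq Z(G)^0$ be a non-trivial $\Rb$-split central torus and decompose $\Rb^{2d+2}$ into $S$-weight spaces $V_\chi$; by faithfulness some weight $\chi$ is non-trivial on $S$. Since $S$ is central, each $V_\chi$ is $G$-invariant, so $\delta_\chi(g) := \det\left(g|_{V_\chi}\right)$ is a character of $G$ defined over $\Rb$ with values in $\Rb^*$, and its restriction to $S$ is $\chi^{\dim V_\chi}$, which has unbounded image because $S$ is $\Rb$-split and $\chi$ is non-trivial. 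On the other hand, if $\gamma \in \Gamma$ is almost unipotent then every eigenvalue of $\gamma|_{V_\chi}$ has modulus one, so $\abs{\delta_\chi(\gamma)} = 1$ and hence $\delta_\chi(\gamma) \in \{\pm 1\}$. As $\{\pm 1\}$ is Zariski closed and $\Gamma$ is Zariski dense in $G$, this forces $\delta_\chi(G) \subset \{\pm 1\}$, contradicting unboundedness on $S$. With this substitution for your second case the proof is complete; note also that the same determinant-on-invariant-subspaces observation is what makes the ``loss of information about the semisimple factor'' harmless, since $\delta_\chi$ is automatically trivial on $[G,G]$ and of modulus one on any compact subgroup.
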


\begin{proof}[Proof of Proposition~\ref{prop:Ugp}]
We will induct on $d$. When $d=0$, the proposition is trivial so suppose $d>0$. 

If $G$ is reductive then the above lemma implies that $G$ is compact. Then $G$ is an almost unipotent group with respect to the flag $\{0\} \subsetneq \Cb^{d+1}$. 

If $G$ is not reductive there exists a connected, non-trivial, normal unipotent group $U \leq G$. By Engel's theorem the vector subspace
\begin{align*}
V = \{ v \in \Cb^{d+1} : uv=v \text{ for all $u \in U$}\}
\end{align*}
is non-empty. Since $U$ is non-trivial, $V$ is a proper subspace. Since $U$ is normal in $G$, $G$ preserves the flag $\{0\} \subsetneq V \subsetneq \Cb^d$. Now let $G_1$ be the Zariski closure of $G |_{V}$ and let $\Gamma_1 = \Gamma |_{V}$ then $\Gamma_1$ is Zariski dense in $G_1$. Moreover each element of $\Gamma_1$ is almost unipotent and hence $\Gamma_1 \leq \SL^*(V)$. Since $\Gamma_1$ is Zariski dense in $G_1$ we see that $G_1 \leq \SL^*(V)$. Thus by induction $G_1$ preserves a flag of the form
\begin{align*}
\{0\} \subsetneq V_1 \subsetneq V_2 \subsetneq \dots \subsetneq V_k=V 
\end{align*}
where the image of $G_1$ into $\GL(V_{i+1}/V_i)$ is bounded. 

In a similar fashion let $G_2$ be the Zariski closure of the image of $G$ in $\GL(\Cb^d/V)$ and let $\Gamma_2$ be the image of $\Gamma$ in $\GL(\Cb^d/V)$. Then $\Gamma_2$ will be Zariski dense in $G_2$. Moreover each element of $\Gamma_2$ is almost unipotent and hence $\Gamma_2 \leq \SL^*(\Cb^d/V)$. Since $\Gamma_2$ is Zariski dense in $G_2$ we see that $G_2 \leq \SL^*(\Cb^d/V)$. Thus by induction $G_2$ preserves a flag of the form
\begin{align*}
\{0\}=W_0 \subsetneq W_1 \subsetneq W_2 \subsetneq \dots \subsetneq W_\ell=\Cb^d/V
\end{align*}
where $W_i = V_{k+i}/V$ and the image of  $G_2$ into $\GL(W_{i+1}/W_i)$ is bounded. 

All this implies that $G$ preserves the flag
\begin{align*}
\{0\}=V_0 \subsetneq V_1\subsetneq \dots \subsetneq V_{k+\ell}=\Cb^d
\end{align*}
and the image of  $G$ into $\GL(V_{i+1}/V_i)$ is bounded. Hence we see that $G$ is almost unipotent.
\end{proof}

\begin{proof}[Proof of Proposition~\ref{prop:growth}]
After conjugating $G$, there exists a compact group $K \leq \SU(d+1)$ and a upper triangular group $U \leq \SL(\Cb^{d+1})$ with ones on the diagonal such that $K$ normalizes $U$ and $G \leq KU$. In particular, we can assume $G=KU$. Now for $\varphi \in \End(\Cb^{d+1})$ define $\abs{\varphi} := \sup\{ \abs{u_{i,j}} \}$. Let $\norm{\cdot}$ be the operator norm on $\End(\Cb^{d+1})$ associated to the standard inner product norm on $\Cb^{d+1}$. Then 
\begin{align*}
\norm{k_1 \varphi k_2 } = \norm{\varphi}
\end{align*}
for $k_1,k_2 \in \SU(d+1)$ and $\varphi \in \End(\Cb^{d+1})$. Moreover, since $\norm{\cdot}$ and $\abs{\cdot}$ are norms on $\End(\Cb^{d+1})$ there exists $\alpha >0$ such that 
\begin{align*}
\frac{1}{\alpha}\abs{ \varphi } \leq \norm{\varphi} \leq \alpha \abs{\varphi}
\end{align*}
for all $\varphi \in \End(\Cb^{d+1})$. In particular, for $k \in K$ and $u \in U$ we have 
\begin{align*}
\abs{kuk^{-1}} \leq \alpha \norm{kuk^{-1}} = \alpha\norm{u} \leq \alpha^2 \abs{u}.
\end{align*} 
Now let $g_1, \dots, g_k$ be as in the statement of the proposition. Then $g_i = k_iu_i$ for some $k_i \in K$ and $u_i \in U$. Since $K$ normalizes $U$ we see that
\begin{align*}
g_{i_1} \cdots g_{i_N} = k u_1^\prime \cdots u_N^\prime
\end{align*}
for some $k \in K$ and some $u_k^\prime \in U$ with $\abs{u_k^\prime} \leq \alpha^2 \abs{u_{i_k}}$. Since $\norm{ku} = \norm{u}$ and  $\norm{u} \leq \alpha \abs{u}$ for $k \in K$ and $u \in U$ the proposition will follow from the claim:\newline

\noindent \textbf{Claim:} For any $R>0$, there exists $C=C(R)>0$ such that for any $u_1, \dots, u_N \in U$ with $\abs{u_i} < R$ we have $\abs{ u_1 \cdots u_N} \leq CN^d$. \newline

Now for $i < j$ 
\begin{align*}
(u_1 \cdots u_N)_{i,j} = \sum_{i = a_0 \leq a_1 \leq \dots \leq a_N=j} (u_1)_{ia_1}(u_{2})_{a_1a_2} \cdots (u_N)_{a_{N-1}j}
\end{align*}
Since $U$ is upper triangular with ones on the diagonal at most $d$ terms in the product $(u_1)_{ia_1}(u_{2})_{a_1a_2} \cdots (u_N)_{a_{N-1}j}$ are not equal to one and so
\begin{align*}
\abs{(u_1)_{ia_1}(u_{2})_{a_1a_2} \cdots (u_N)_{a_{N-1}j}} \leq R^d.
\end{align*}
Now we estimate the number of terms in the sum. Notice that $a_{k+1}-a_k \geq 0$ and 
\begin{align*}
\sum_{i=0}^{N-1} a_{k+1}-a_k=j-i.
\end{align*}
Thus we need to estimate the number of ways to write $j-i$ as the sum of $N$ non-negative integers (where order matters). First let $C_n(j-i)$ be the number of ways to write $j-i$ as the sum of $n$ positive integers. Next, at most $j-i$ of the $a_{k+1}-a_k$ are positive and hence the number of ways to write $j-i$ as the sum of $N$ non-negative integers is at most
\begin{align*}
\sum_{n=1}^{j-i} \begin{pmatrix} N \\ n \end{pmatrix}C_n(j-i).
\end{align*}
Then
\begin{align*}
\abs{(u_1 \cdots u_N)_{i,j} }\leq  \sum_{i = a_0 \leq a_1 \leq \dots \leq a_N=j} \abs{(u_1)_{ia_1}(u_{2})_{a_1a_2} \cdots (u_N)_{a_{N-1}j}} \leq R^d \sum_{n=1}^{j-i} \begin{pmatrix} N \\ n \end{pmatrix}C_n(j-i) 
\end{align*}
and since $j-i \leq d$ there exists $C=C(R)>0$ such that 
\begin{align*}
R^d \sum_{n=1}^{j-i} \begin{pmatrix} N \\ n \end{pmatrix} C_n(j-i) < CN^d
\end{align*}
for any $i<j$.
\end{proof}

\section{Constructing additional automorphisms}\label{sec:constr_auto}

Suppose $\Omega$ is a proper $\Cb$-convex open set with $C^1$ boundary. If $\varphi \in \Aut(\Omega)$ is bi-proximal, then we have the following standard form. First let $H^{\pm}$ be the complex tangent hyperplane at $x^{\pm}_\varphi$. Then pick coordinates such that
\begin{enumerate}
\item $x^+_\varphi = [1:0:\dots:0]$,
\item $x^-_\varphi = [0:1:0: \dots :0]$,
\item $H^+ \cap H^- = \{ [0:0:z_2:\dots:z_d]\}$.
\end{enumerate}
With respect to these coordinates, $\varphi$ is represented by a matrix of the form
\begin{align*}
\begin{pmatrix}
\lambda^+ & 0 & ^t\vec{0}\\
0 & \lambda^- & ^t\vec{0} \\
\vec{0} & \vec{0} & A
\end{pmatrix}
\end{align*}
where $A$ is a $(d-1)$-by-$(d-1)$ matrix. Since $H^-=\{[0:z_1:\dots:z_d]\}$ and $\Omega \cap H^- = \emptyset$ we see that $\Omega$ is contained in the affine chart $\Cb^d = \{ [1: z_1: \dots : z_d ] : z_1, \dots, z_d \in \Cb\}$. In this affine chart $x^+_\varphi$ corresponds to $0$ and $T^{\Cb}_0 \partial \Omega = \{0\} \times \Cb^{d-1}$. Then by a projective transformation we may assume that
\begin{enumerate}
\setcounter{enumi}{3}
\item $T_0 \partial \Omega = \Rb \times \Cb^{d-1}$.
\end{enumerate}
Since $\partial \Omega$ is $C^1$ there exists open neighborhoods $V,W \subset \Rb$ of $0$, an open neighborhood $U \subset \Cb^{d-1}$ of $\vec{0}$, and a $C^1$ function $F:V \times U \rightarrow W$ such that if $\Oc = (V+iW) \times U$ then 
\begin{enumerate}
\setcounter{enumi}{4}
\item $\partial \Omega \cap \Oc = \mathrm{Graph}(F)=\left\{ (x+iF\left(x,\vec{z}), \vec{z}\right) : x \in V, \vec{z} \in U\right\}$. 
\end{enumerate}
By another projective transformation we can assume 
\begin{enumerate}
\setcounter{enumi}{5}
\item $\Omega \cap \Oc = \{ (x+iy, \vec{z}) \in \Oc : y > F(x,\vec{z}) \}$.
\end{enumerate}

\begin{theorem}
\label{thm:blow_up}
With the choice of coordinates above,
\begin{align*}
\Omega \cap \left\{ [ z_1: z_2 : 0 : \dots :0] \right\}= \{ [1:z: 0, \dots: 0] : \mathrm{Im}(z) >0\}.
\end{align*}
Moreover for 
\begin{align*}
h = \begin{pmatrix} a & b \\ c & d \end{pmatrix} \in \SL(\Rb^2)
\end{align*}
the projective transformation defined by 
\begin{align*}
\psi_h \cdot [z_1, \dots, z_d] = [ az_1 + bz_2 : cz_1 + d z_2 : z_3 : \dots : z_{d+1}]
\end{align*}
is in $\Aut_0(\Omega)$.
\end{theorem}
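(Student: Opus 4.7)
My plan is to prove the two claims of the theorem in sequence, with the first following from a direct analysis of the $\varphi$-action on the complex projective line $L=\{[z_1:z_2:0:\cdots:0]\}$, and the second requiring the construction of a richer family of automorphisms from the given bi-proximal element $\varphi$. I expect the second part to be the main obstacle.

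For the first claim, I first observe that $L \cap \Omega$ is nonempty (it contains points of the form $[1:iy:0:\cdots:0]$ for small $y>0$ by condition (6) on our coordinates) and that $L \not\subset T^{\Cb}_q \partial \Omega$ at any $q \in L \cap \partial \Omega$ (otherwise $\Cb$-convexity would force $L \cap \Omega = \emptyset$, as in the proof of Theorem~\ref{thm:quasi_geod}); hence $C := L \cap \partial \Omega$ is a $C^1$-embedded Jordan curve in $L$ passing through $x^+$ and $x^-$. On $L$, the element $\varphi$ acts as the Möbius transformation $[z_1:z_2] \mapsto [\lambda^+z_1:\lambda^-z_2]$, which in the affine chart $z_1=1$ is the dilation $z \mapsto \mu z$ with $\mu := \lambda^-/\lambda^+$ satisfying $\abs{\mu}<1$. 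By condition (4) the tangent to $C$ at $x^+=0$ is $\Rb$, so near $0$ the curve $C$ is the graph $x \mapsto x+iG(x)$ with $G(0)=G'(0)=0$. Writing $\mu = u+iv$ and expanding the real and imaginary parts of $\mu(x+iG(x)) \in C$ as $x \to 0$ forces $v=0$, so $\mu$ is real; the fact that $\varphi$ preserves the half-plane side of $C$ containing $\Omega \cap L$ then forces $\mu>0$. The resulting identity $G(\mu x)=\mu G(x)$ iterates to $G(\mu^n x)/(\mu^n x) = G(x)/x$, whose left-hand side tends to $G'(0)=0$, so $G \equiv 0$ near $0$. Finally, for any $p \in C \setminus \{\infty\}$ the iterates $\mu^n p$ lie on $C$ and tend to $0$, hence are real for large $n$; since $\mu>0$, $p$ itself is real. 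Combined with $C$ being a Jordan curve through $0$ and $\infty$, this gives $C=\Rb \cup \{\infty\}$ and $\Omega \cap L$ is the upper half-plane.

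For the second claim, a direct matrix computation shows that $\varphi\,\psi_{h^-_t}\,\varphi^{-1} = \psi_{h^-_{\mu t}}$, where $h^-_t \in \SL(\Rb^2)$ is the lower unipotent with lower-left entry $t$, and similarly $\varphi^{-1}\,\psi_{h^+_s}\,\varphi = \psi_{h^+_{\mu s}}$ for the upper unipotent $h^+_s$. Consequently the subsets $T^{\pm} := \{t \in \Rb : \psi_{h^{\pm}_t} \in \Aut(\Omega)\}$ are additive subgroups of $\Rb$, closed because $\Aut(\Omega)$ is closed in $\PSL(\Cb^{d+1})$, and stable under multiplication by $\mu$ (with $0<\mu<1$). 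A closed additive subgroup of $\Rb$ that is invariant under a contracting similarity and contains a nonzero element must equal all of $\Rb$. Since the upper and lower unipotents generate $\SL(\Rb^2)$, once $T^+=T^-=\Rb$ we have $\psi_h \in \Aut(\Omega)$ for every $h \in \SL(\Rb^2)$; the continuous map $h \mapsto \psi_h$ from the connected group $\SL(\Rb^2)$ then lands inside $\Aut_0(\Omega)$.

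The hard part is therefore producing a single nonzero element of $T^+$ or $T^-$, and this is where the technical work lies. I would attempt this via a renormalization argument using cocompactness of the dividing group $\Gamma$: choose a sequence $p_n \in \Omega$ converging to $x^+$ along the real tangent direction inside $L$ at controlled rates, apply cocompactness and Proposition~\ref{prop:proper} to find $\gamma_n \in \Gamma$ with $\gamma_n p_n$ remaining in a fixed compact set, and take a subsequential limit of lifts $\hat\gamma_n \in \SL(\Cb^{d+1})$ normalized by $\norm{\hat\gamma_n}=1$. By carefully tuning the rate of approach against the scaling $\mu$, one should arrange for the limit $\hat\gamma_\infty$ to be invertible, and for its restriction to $L$ to be a parabolic translation $z \mapsto z+t_0$ with $t_0 \neq 0$ rather than a dilation or a rank-dropping projection. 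Pinning down the action of $\hat\gamma_\infty$ on the complementary directions — using that it must preserve $\Omega$, hence the complex tangent hyperplane $H^+$ at $x^+$ and its intersection $H^+ \cap H^-$, and using the $C^1$ graph description of $\partial\Omega$ near $x^+$ — should identify $\hat\gamma_\infty$ with $\psi_{h^-_{t_0}}$ on all of $\Pb(\Cb^{d+1})$. Verifying that no exotic action in the $z_3,\dots,z_{d+1}$ directions is possible is the delicate point I foresee.
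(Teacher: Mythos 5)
Your first claim is handled correctly, and in fact by a more direct route than the paper, which deduces it from the second claim via transitivity of the $\SL(\Rb^2)$-action on the upper half-plane. Your reduction of the second claim to producing one nonzero element of $T^{+}$ or $T^{-}$ is also sound: the conjugation identities and the classification of closed additive subgroups of $\Rb$ invariant under multiplication by $\mu \in (0,1)$ do the rest. But the proof is genuinely incomplete at exactly the step you flag: you never produce that nonzero element. The renormalization scheme you sketch (pulling $p_n \to x^+$ back into a compact set with $\gamma_n \in \Gamma$ and hoping the normalized limit is an invertible parabolic) contains no argument ruling out a rank-dropping or non-parabolic limit, and it also imports a divisibility hypothesis that the theorem does not assume --- the statement is for an arbitrary bi-proximal $\varphi \in \Aut(\Omega)$ with $\Omega$ proper, $\Cb$-convex, and $C^1$, and it is later applied in that generality.

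The missing idea is already contained in your own proof of the first claim, applied to the full graph function rather than its restriction to $L$. Writing $\partial\Omega$ near $x^+=0$ as the graph $\left\{ \left(x+iF(x,\vec z),\vec z\right) \right\}$ and using $\varphi\cdot(z_1,\vec z)=(\mu z_1, A\vec z/\lambda^+)$ with $\mu=\lambda^-/\lambda^+$ real, invariance of the boundary gives $F(\mu x, A\vec z/\lambda^+)=\mu F(x,\vec z)$; differentiating in $x$ and iterating yields $(\partial_xF)(x,\vec z)=(\partial_xF)(0,0)=0$, i.e.\ $F(x,\vec z)=F(0,\vec z)$, which is the $d$-dimensional analogue of your one-variable computation $G(\mu x)=\mu G(x)\Rightarrow G\equiv 0$. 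This says $\partial\Omega$ is locally invariant under the real translations $u_t\colon(z_1,\vec z)\mapsto(z_1+t,\vec z)$, so $u_t$ maps a neighborhood of $0$ in $\Omega$ into $\Omega$ for $\abs{t}$ small; the relation $u_{\mu^m t}\circ\varphi^m=\varphi^m\circ u_t$ together with the attracting dynamics of $\varphi$ (part (4) of Theorem~\ref{thm:bi_prox}) then propagates this to $u_t(\Omega)\subset\Omega$ for every $t\in\Rb$, giving $T^{-}=\Rb$; the same argument run for $\varphi^{-1}$ in the affine chart centered at $x^-$ gives $T^{+}=\Rb$. No control of the transverse $z_3,\dots,z_{d+1}$ directions beyond this functional equation is needed, so the delicate point you foresee simply does not arise on this route.
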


\begin{proof}
We can assume $\Oc$ is bounded. Then using part (4) of Theorem~\ref{thm:bi_prox} we can replace $\varphi$ with a power of $\varphi$ so that $\varphi(\Oc) \subset \Oc$.

We first claim that $F(x,\vec{z}) = F(0,\vec{z})$ for $(x,\vec{z}) \in V \times U$. Notice that with our choice of coordinates $\varphi$ acts by
\begin{align*}
\varphi \cdot (z_1, \vec{z}) = \left( \frac{\lambda^-z_1}{\lambda^+}, \frac{A\vec{z}}{\lambda^+}\right)
\end{align*}
where $\lambda^{\pm}$ and $A$ are as above. Since $\varphi$ is bi-proximal 
\begin{align*}
\left(\frac{A}{\lambda^+}\right)^n \rightarrow 0.
\end{align*}
Since $\varphi$ preserves $T_0\partial \Omega = \Rb \times \Cb^{d-1}$ we see that $\lambda^-/\lambda^+ \in \Rb$. Since $x^+_{\varphi}$ is an attracting fixed point we have $\lambda^-/\lambda^+ \in (-1,1)$. Finally since
\begin{align*}
\varphi \cdot (x+iF(x,\vec{z}), \vec{z}) = \left( \frac{\lambda^-}{\lambda^+}x+i\frac{\lambda^-}{\lambda^+}F\left(x,\vec{z}\right), \frac{A}{\lambda^+}\vec{z}\right)
\end{align*}
and $\varphi(\Oc) \subset \Oc$ we see that 
\begin{align*}
F\left(\frac{\lambda^-}{\lambda^+}x,\frac{A}{\lambda^+}\vec{z}\right)=\frac{\lambda^-}{\lambda^+}F(x,\vec{z}).
\end{align*}
Differentiating with respect to $x$ yields
\begin{align*}
(\partial_x F)(x,\vec{z}) = (\partial_x F)\left(\frac{\lambda^-}{\lambda^+}x, \frac{A}{\lambda^+}\vec{z}\right)
\end{align*}
and repeated applications of the above formula shows
\begin{align*}
(\partial_x) F(x,\vec{z}) = (\partial_x F)\left(\left(\frac{\lambda^-}{\lambda^+}\right)^nx, \left(\frac{A}{\lambda^+}\right)^n\vec{z}\right)
\end{align*}
for all $n>0$. Taking the limit as $n$ goes to infinity proves that $(\partial_x F)(x,\vec{z}) = (\partial_x F)(0,0)$. Since $(\partial_x F)(0,0)=0$ we then see that $F(x,\vec{z}) = F(0,\vec{z})$.  

Now for $t \in \Rb$ define the projective map $u_t$ by $u_t \cdot (z_1, \dots, z_d) = (z_1+t, z_2, \dots, z_d)$. Since $F(x,\vec{z}) = F(0,\vec{z})$, we see that there exists $\epsilon >0$ and an open neighborhood $U$ of $0 \in \Cb^{d}$ such that $u_t (z_1, \dots, z_d) \in \Omega$ for all $(z_1,\dots, z_d) \in U \cap \Omega$ and $\abs{t} < \epsilon$. Now by construction
\begin{align*}
u_{(\lambda^- /\lambda^+)t} \circ \varphi = \varphi \circ u_t
\end{align*}
and by part (4) of Theorem~\ref{thm:bi_prox} for any $x \in \Omega$ and $t \in \Rb$ there exist $m$ such that $\varphi^m x \in U$ and $\abs{(\lambda^-/\lambda^+)^m t} < \epsilon$. With this choice of $m$
\begin{align*}
\varphi^m u_t x = u_{(\lambda^-/\lambda^+)^m t} \varphi^m x
\end{align*}
is in $\Omega$. As $\Omega$ is $\varphi$-invariant this implies that $u_t x \in \Omega$. As $x \in \Omega$ and $t \in \Rb$ were arbitrary this implies that $u_t \in \Aut_0(\Omega)$ for all $t \in \Rb$. Also $u_t$ corresponds to the matrix 
\begin{align*}
\begin{pmatrix}
1 & 0 \\ 
t & 1 
\end{pmatrix}
\end{align*}
in the action of $\SL(\Rb^2)$ defined in the statement of the theorem.

The same argument starting with $\varphi^{-1}$ instead of $\varphi$ (that is viewing $\Omega$ as a subset of the affine chart $\{ [z_1 : 1 : z_2 : \dots : z_d]\}$) shows that $\Aut_0(\Omega)$ contains the one-parameter group of automorphisms corresponding to the matrices
\begin{align*}
\begin{pmatrix}
1 & s \\ 
0 & 1 
\end{pmatrix}
\end{align*}
in the action of $\SL(\Rb^2)$ defined in the statement of the theorem.  

Finally it is well known that these two one-parameter subgroups generate all of $\SL(\Rb^2)$ and thus the second part of the theorem is proven. 

It well known that the action of $\SL(\Rb^2)$ restricted to $\{ [1:z:0:\dots : 0] : \textrm{Im}(z) >0\}$ is transitive. As $\Omega$ intersects this set, we see that 
\begin{align*}
\Omega \cap \left\{ [ z_1: z_2 : 0 : \dots :0] \right\}= \{ [1:z: 0, \dots: 0] : \mathrm{Im}(z) >0\}.
\end{align*}

\end{proof}

Notice that the projective map $\psi_t=\psi_{d_t}$ with $d_t = \begin{pmatrix} e^{t} & 0 \\ 0 & e^{-t} \end{pmatrix}$ corresponds to the matrix
\begin{align*}
\begin{pmatrix}
e^{t} & & & & \\
& e^{-t} & & & \\
& & 1 &  & \\
& & & \ddots & \\
& & & & 1 
\end{pmatrix}
\end{align*}
hence $\psi_t$ is bi-proximal, $x^+_{\psi_t} = x^+_{\varphi}$, and $x^-_{\psi_t}=x^-_{\varphi}$. We state this observation (and a little more) as a corollary.

\begin{corollary}
\label{cor:special_bi}
Suppose $\Omega$ is a proper $\Cb$-convex open set with $C^1$ boundary. If $\varphi \in \Aut(\Omega)$ is bi-proximal, then there exists a one-parameter subgroup $\psi_t \in \SL(\Cb^{d+1})$ of bi-proximal elements such that $[\psi_t] \in \Aut_0(\Omega)$ and
\begin{enumerate}
\item $(\psi_t)|_{x^+_{\varphi}} = e^tId|_{x^+_{\varphi}} $,
\item $(\psi_t)|_{x^-_{\varphi}} = e^{-t}Id|_{x^-_{\varphi}} $,
\item $(\psi_t)|_{H^+ \cap H^-} = Id|_{H^+ \cap H^-}$ where $H^{\pm} = T_{x^\pm_{\varphi}}^{\Cb} \partial \Omega$.
\end{enumerate}
\end{corollary}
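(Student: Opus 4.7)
The plan is to apply Theorem~\ref{thm:blow_up} directly to the given bi-proximal element $\varphi$. First I would choose coordinates so that $\varphi$ sits in the normal form introduced just before Theorem~\ref{thm:blow_up}, which ensures $x^+_\varphi = [1:0:\cdots:0]$, $x^-_\varphi = [0:1:0:\cdots:0]$, and $H^+ \cap H^- = \{[0:0:z_2:\cdots:z_d]\}$. Theorem~\ref{thm:blow_up} then produces a group homomorphism $h \mapsto [\psi_h]$ from $\SL(\Rb^2)$ into $\Aut_0(\Omega)$, where in the chosen basis $e_1,\ldots,e_{d+1}$ the element $\psi_h$ is represented by the block diagonal matrix $\mathrm{diag}(h,I_{d-1}) \in \SL(\Cb^{d+1})$.

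Next I would specialize to the diagonal one-parameter subgroup $d_t = \begin{pmatrix} e^{t} & 0 \\ 0 & e^{-t} \end{pmatrix}$ of $\SL(\Rb^2)$ and set $\psi_t := \psi_{d_t} = \mathrm{diag}(e^{t},e^{-t},1,\ldots,1) \in \SL(\Cb^{d+1})$. This is manifestly a one-parameter subgroup of $\SL(\Cb^{d+1})$ whose projective class lies in $\Aut_0(\Omega)$ for each $t \in \Rb$. Properties (1)--(3) can then be read off from the diagonal form: the eigenspace for $e^{t}$ is $\Cb \cdot e_1 = x^+_\varphi$, the eigenspace for $e^{-t}$ is $\Cb \cdot e_2 = x^-_\varphi$, and the eigenspace for $1$ is $\mathrm{span}(e_3,\ldots,e_{d+1})$, which coincides with $H^+ \cap H^-$ by construction of the coordinates.

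Finally, for bi-proximality at $t \neq 0$: the spectrum of $\psi_t$ is $\{e^{t},e^{-t},1,\ldots,1\}$, so $e^{\abs{t}}$ is the unique eigenvalue of largest modulus and $e^{-\abs{t}}$ the unique eigenvalue of smallest modulus. Hence $\psi_t$ is bi-proximal with attracting and repelling fixed lines $x^+_{\psi_t} = x^+_\varphi$ and $x^-_{\psi_t} = x^-_\varphi$. There is no substantive obstacle in the proof: the corollary is essentially a bookkeeping consequence of Theorem~\ref{thm:blow_up}, obtained by restricting the homomorphism $h \mapsto [\psi_h]$ to the diagonal Cartan of $\SL(\Rb^2)$ and observing that the natural block lift already lies in $\SL(\Cb^{d+1})$.
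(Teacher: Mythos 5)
Your proposal is correct and follows the paper's own argument essentially verbatim: the paper likewise obtains the corollary by restricting the $\SL(\Rb^2)$-action of Theorem~\ref{thm:blow_up} to the diagonal subgroup $d_t$ and reading off properties (1)--(3) from the block matrix $\mathrm{diag}(e^t,e^{-t},1,\dots,1)$ in the normalized coordinates.
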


Since $\SL(\Rb^2)$ acts transitively on $\{ [1:z] \in \Pb(\Cb^2) : \mathrm{Im}(z) >0\}$, Theorem~\ref{thm:blow_up} also implies the following:

\begin{corollary}
\label{cor:bi_2}
Suppose $\Omega$ is a proper $\Cb$-convex open set with $C^1$ boundary, $\varphi \in \Aut(\Omega)$ is bi-proximal, and $L$ is the complex projective line generated by $x^+_{\varphi }$ and $x^-_{\varphi }$. Then for all $p,q \in L \cap \Omega$ distinct there exists $\varphi_{pq} \in \Aut_0(\Omega)$ such that $\varphi_{pq}(p)=q$. 
\end{corollary}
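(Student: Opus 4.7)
The plan is to read this directly off Theorem~\ref{thm:blow_up}. First I would set up coordinates exactly as in the statement of that theorem, so that $x^+_\varphi = [1:0:\cdots:0]$, $x^-_\varphi = [0:1:0:\cdots:0]$, and $L$ is the complex projective line $\{[z_1:z_2:0:\cdots:0]\}$ generated by these two points. Theorem~\ref{thm:blow_up} gives us the explicit identification
\begin{align*}
L \cap \Omega = \left\{ [1:z:0:\cdots:0] : \mathrm{Im}(z) > 0 \right\},
\end{align*}
i.e.\ the upper half plane in its standard model inside $\Pb(\Cb^2)$.

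Next I would invoke the second conclusion of Theorem~\ref{thm:blow_up}: for every $h \in \SL(\Rb^2)$, the projective transformation
\begin{align*}
\psi_h \cdot [z_1:z_2:z_3:\cdots:z_{d+1}] = [az_1+bz_2 : cz_1+dz_2 : z_3 : \cdots : z_{d+1}]
\end{align*}
lies in $\Aut_0(\Omega)$. In particular $\psi_h$ preserves $L$, and its restriction to $L$ is precisely the standard M\"obius action of $\SL(\Rb^2)$ on $\Pb(\Cb^2)$, which under the identification above is the usual action of $\SL(\Rb^2)$ on the upper half plane.

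It is a classical fact that this action is transitive on the upper half plane (indeed $\SL(\Rb^2)/\SO(2)$ is the upper half plane model of $\Hb^2$). So given any two distinct points $p, q \in L \cap \Omega$, choose $h \in \SL(\Rb^2)$ sending the upper-half-plane coordinate of $p$ to that of $q$, and set $\varphi_{pq} := \psi_h$. Then $\varphi_{pq} \in \Aut_0(\Omega)$ and $\varphi_{pq}(p) = q$, proving the corollary. There is no real obstacle here; the corollary is essentially a packaging of Theorem~\ref{thm:blow_up} together with the transitivity of $\SL(\Rb^2)$ on the upper half plane.
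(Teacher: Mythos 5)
Your proposal is correct and matches the paper's argument exactly: the paper derives Corollary~\ref{cor:bi_2} in one line from Theorem~\ref{thm:blow_up} together with the transitivity of $\SL(\Rb^2)$ on $\{[1:z] : \mathrm{Im}(z)>0\}$, which is precisely what you do.
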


Since $\SL(\Rb^2)$ acts transitively on $\{ [1:z] \in \Pb(\Cb^2) : \mathrm{Im}(z)=0\} \cup \{ [0:1]\}$, Theorem~\ref{thm:blow_up} almost implies the following:

\begin{corollary}
\label{cor:bi_3}
Suppose $\Omega$ is a proper $\Cb$-convex open set with $C^1$ boundary, $\varphi \in \Aut(\Omega)$ is bi-proximal, and $L$ is the complex projective line generated by $x^+_{\varphi}$ and $x^-_{\varphi}$. Then for all $x,y \in L \cap \partial\Omega$ there exists $\varphi_{xy} \in \Aut_0(\Omega)$ such that $\varphi_{xy}(x)=y$.
\end{corollary}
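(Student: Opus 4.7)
The plan is to extract this as a direct consequence of Theorem~\ref{thm:blow_up} together with the transitivity of the $\SL(\Rb^2)$-action on $\Rb\Pb^1$. Choose the coordinates provided by the setup before Theorem~\ref{thm:blow_up}, so that $x^+_{\varphi}=[1:0:\dots:0]$, $x^-_{\varphi}=[0:1:0:\dots:0]$, and $L=\{[z_1:z_2:0:\dots:0]\}$. By Theorem~\ref{thm:blow_up},
\[
\Omega\cap L=\{[1:z:0:\dots:0]:\mathrm{Im}(z)>0\},
\]
so the closure $L\cap\overline{\Omega}$ is a closed upper half-plane inside the complex projective line $L\cong\Cb\Pb^1$, and its boundary
\[
L\cap\partial\Omega=\{[1:t:0:\dots:0]:t\in\Rb\}\cup\{[0:1:0:\dots:0]\}
\]
is a copy of $\Rb\Pb^1$.

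First I would observe that the homomorphism $h\mapsto\psi_h$ from Theorem~\ref{thm:blow_up} takes values in $\Aut_0(\Omega)$, and by construction its restriction to $L$ is exactly the standard action of $\SL(\Rb^2)$ on $\Pb(\Cb^2)$ through fractional linear transformations on the first two homogeneous coordinates. In particular this restricted action preserves both the upper half-plane $\Omega\cap L$ and its boundary $L\cap\partial\Omega\cong\Rb\Pb^1$.

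Second, I would invoke the classical fact that $\SL(\Rb^2)$ acts transitively on $\Rb\Pb^1$ by M\"{o}bius transformations. Given any $x,y\in L\cap\partial\Omega$, choose $h\in\SL(\Rb^2)$ with $\psi_h\cdot x=y$, and set $\varphi_{xy}:=\psi_h\in\Aut_0(\Omega)$. This produces the required element.

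There is essentially no obstacle here: all the content is packaged inside Theorem~\ref{thm:blow_up}, and the only remaining ingredient is the transitivity of the M\"{o}bius action on $\Rb\Pb^1$. The statement is precisely the boundary analogue of Corollary~\ref{cor:bi_2}, which handles the interior $L\cap\Omega$ via transitivity of $\SL(\Rb^2)$ on the upper half-plane.
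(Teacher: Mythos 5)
Your main line of argument is the same as the paper's: Theorem~\ref{thm:blow_up} puts the $\SL(\Rb^2)$-action $\psi_h$ inside $\Aut_0(\Omega)$, it preserves $L$, and transitivity of the M\"{o}bius action on $\Rb\Pb^1$ then moves any boundary point of the half-plane $\Omega\cap L$ to any other. But there is a gap at the point where you write that $L\cap\overline{\Omega}$ \emph{is} the closed upper half-plane and hence that $L\cap\partial\Omega=\{[1:t:0:\dots:0]:t\in\Rb\}\cup\{[0:1:0:\dots:0]\}$. What Theorem~\ref{thm:blow_up} gives you is $\Omega\cap L$, and from that you only get the inclusion $\partial(\Omega\cap L)\subseteq L\cap\partial\Omega$; the reverse inclusion is not automatic. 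A priori $L$ could meet $\partial\Omega$ at a point $q$ lying in the open lower half-plane of $L$ (a point of $\overline{\Omega}\setminus\Omega$ that is a limit of points of $\Omega$ approaching from outside $L$), and such a $q$ would belong to $L\cap\partial\Omega$ but not to $\overline{L\cap\Omega}$, so the $\SL(\Rb^2)$-orbit argument would not reach it. Since the corollary is stated for all $x,y\in L\cap\partial\Omega$, you must rule this out.

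The paper closes exactly this gap: it shows $L\cap\partial\Omega=\partial(\Omega\cap L)$ by proving that $L$ meets $\partial\Omega$ transversally at every point of $L\cap\partial\Omega$. Indeed, if $L$ failed to be transverse at some $x\in L\cap\partial\Omega$, then (using that $\partial\Omega$ is $C^1$ and $L$ is a complex line) one would have $L\subset T_x^{\Cb}\partial\Omega$, and $\Cb$-convexity would then force $L\cap\Omega=\emptyset$, contradicting $x^{\pm}_{\varphi}\in L$ with $L\cap\Omega$ a nonempty half-plane. Transversality implies every point of $L\cap\partial\Omega$ is a limit of points of $L\cap\Omega$, giving the needed equality. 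You should add this step; with it, your argument matches the paper's proof.
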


\begin{proof}
Theorem~\ref{thm:blow_up} implies that for all $x,y \in \partial(L \cap \Omega)$ there exists $\varphi \in \text{Aut}_0(\Omega)$ such that $\varphi(x)=y$. Thus we only have to show that $L \cap \partial \Omega = \partial (\Omega \cap L)$. To establish this it is enough to show that $L$ intersects $\partial \Omega$ transversally. Suppose this were not the case, then there exists $x \in L \cap \partial \Omega$ such that $L \subset T_x^{\Cb} \partial \Omega$. Then since $\Omega$ is $\Cb$-convex $L \cap \Omega = \emptyset$ which is nonsense. Thus $L$ intersects $\partial \Omega$ transversally and thus $L \cap \partial \Omega = \partial (\Omega \cap L)$.
\end{proof}

\section{Strict convexity}\label{sec:str}

We call a $\Cb$-convex open set $\Omega$ \emph{strictly $\Cb$-convex} if every complex tangent hyperplane of $\Omega$ intersects $\partial\Omega$ at exactly one point. 

\begin{theorem}
\label{thm:str}
Suppose $\Omega$ is a proper $\Cb$-convex open set with $C^1$ boundary and $\Gamma \leq \PSL(\Cb^d)$ is a torsion-free group dividing $\Omega$. Then $\Omega$ is strictly $\Cb$-convex and for all $x,y \in \partial \Omega$ distinct there exists $\varphi \in \Aut_0(\Omega)$ bi-proximal such that $x=x^+_{\varphi}$ and $y=x^-_{\varphi}$.
\end{theorem}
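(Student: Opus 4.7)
The plan is to first establish strict $\Cb$-convexity via a density argument coming from cocompactness, and then to use the richness of $\Aut_0(\Omega)$ developed in Section~\ref{sec:constr_auto} to produce bi-proximal elements with any prescribed pair of fixed points. Throughout, the key object is a bi-proximal $\gamma_0 \in \Gamma$ guaranteed by Theorem~\ref{thm:exist_bi}, with attracting/repelling fixed points $x^\pm_0 \in \partial\Omega$; for each $\eta \in \Gamma$ the conjugate $\eta\gamma_0\eta^{-1}$ is bi-proximal with attracting fixed point $\eta x^+_0$, so by Theorem~\ref{thm:bi_prox} each $\eta x^+_0$ lies in the set
\begin{equation*}
E := \{ w \in \partial\Omega : T^\Cb_w \partial\Omega \cap \partial\Omega = \{w\}\}.
\end{equation*}

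The first technical step is to show that the $\Gamma$-orbit $\Gamma \cdot x^+_0$ accumulates at every point of $\partial\Omega$. Given $y \in \partial\Omega$ and $p_n \in \Omega$ with $p_n \to y$, cocompactness yields $\eta_n \in \Gamma$ with $\eta_n^{-1}p_n$ in a fixed compact subset of $\Omega$; proper discontinuity (Proposition~\ref{prop:proper}) forces $\eta_n \to \infty$ in $\PSL(\Cb^{d+1})$, so normalized lifts $\hat\eta_n$ with $\norm{\hat\eta_n}=1$ admit a subsequential limit $\hat\eta_\infty$ with nontrivial kernel. Combining the isometric invariance of $d_\Omega$ with Lemma~\ref{lem:same_topo} and Proposition~\ref{prop:bd_behav}, the image $[\hat\eta_\infty](\Omega \setminus [\ker\hat\eta_\infty])$ is forced to lie inside the single Gauss fiber $T^\Cb_y\partial\Omega \cap \partial\Omega$. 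After replacing $\gamma_0$ by a suitable $\Gamma$-conjugate so that $x^+_0 \notin [\ker\hat\eta_\infty]$, the sequence $\eta_n x^+_0$ accumulates in this fiber; since each $\eta x^+_0 \in E$, this produces density of $E$ in $\partial\Omega$.

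To finish strictness I would show that $E$ is closed. Suppose $y_n \in E$ with $y_n \to y$ and, for contradiction, $z \in T^\Cb_y\partial\Omega \cap \partial\Omega$ with $z \neq y$; the observation in Section~\ref{sec:prelim} forces $T^\Cb_z = T^\Cb_y$. Producing bi-proximal $\varphi_n = \eta_n\gamma_0\eta_n^{-1}$ with $x^+_{\varphi_n} \to y$ via the density step, and passing to a subsequence so that $x^-_{\varphi_n} \to y^-$, one applies Theorem~\ref{thm:bi_prox}(4) to push $z$ under $\varphi_n^m$ toward $x^+_{\varphi_n}$; the $\varphi_n$-invariance of $T^\Cb_y$ together with the local normal form $F(x,\vec z) = F(0,\vec z)$ near $x^+_{\varphi_n}$ from Theorem~\ref{thm:blow_up} eventually forces a second boundary point to appear in $T^\Cb_{x^+_{\varphi_n}}$, contradicting Theorem~\ref{thm:bi_prox}. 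With strictness established, the second assertion follows by combining Corollary~\ref{cor:special_bi} (producing a one-parameter subgroup $\psi_t \in \Aut_0(\Omega)$ of bi-proximal elements fixing $x^\pm_0$) with Corollary~\ref{cor:bi_3} applied along a chain of bi-proximal $\Gamma$-conjugates to build $\Phi \in \Aut_0(\Omega)$ sending $x^\pm_0$ to any prescribed distinct pair $(x,y)$; then $\Phi\psi_t\Phi^{-1}$ is the required bi-proximal element of $\Aut_0(\Omega)$.

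The main obstacle is the closedness of $E$. A naive $C^1$-continuity argument fails because a limit of points outside $\overline{\Omega}$ may still lie on $\partial\Omega$, so a witness $z$ in $T^\Cb_y$ could a priori persist in the limit even when no approximating tangent hyperplane has a second boundary point; equivalently, the Gauss fiber can only jump up in the limit. Overcoming this jump phenomenon is precisely where the normal form of Theorem~\ref{thm:blow_up} together with the attractor/repeller dichotomy of Theorem~\ref{thm:bi_prox}(4) has to do the real work. A secondary difficulty is the second assertion: the conjugation $\Phi$ must lie in $\Aut_0(\Omega)$ rather than merely $\Aut(\Omega)$, which rules out using $\Gamma$-conjugation directly and forces a more careful composition of automorphisms coming from the $\SL(\Rb^2)$-actions of Theorem~\ref{thm:blow_up} applied to several bi-proximal elements of $\Gamma$ at once.
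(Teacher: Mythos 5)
Your first step (the orbit of $x^+_{\gamma_0}$ under $\Gamma$ accumulates in every fiber $T_y^{\Cb}\partial\Omega\cap\partial\Omega$ via a degenerating sequence of normalized lifts) is essentially the paper's Proposition~\ref{prop:str_1}, though you assert rather than prove that some conjugate of $x^+_{\gamma_0}$ avoids $[\ker\hat\eta_\infty]$; the paper establishes this by showing the span of all attracting fixed points is a $\Gamma$-invariant subspace that must be everything (a cohomological-dimension argument), and this is a step you cannot skip. The more serious gap is exactly where you locate the ``main obstacle'': your proposed proof that $E$ is closed does not work as sketched. The hyperplane $T_y^{\Cb}\partial\Omega$ is not $\varphi_n$-invariant ($\varphi_n$ preserves the tangent hyperplanes at its own fixed points, not at $y$), and pushing $z$ forward by $\varphi_n^m$ toward $x^+_{\varphi_n}$ produces points converging to $x^+_{\varphi_n}$ but never a second point of $\partial\Omega$ lying in $T^{\Cb}_{x^+_{\varphi_n}}\partial\Omega$: the fiber really can collapse along the approximating sequence and jump only in the limit, and neither the normal form of Theorem~\ref{thm:blow_up} nor Theorem~\ref{thm:bi_prox}(4) bridges that. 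The paper's resolution is structurally different: it first proves Proposition~\ref{prop:str_2}, namely that whenever $x,y$ are limits of attracting fixed points with $T_x^{\Cb}\partial\Omega\neq T_y^{\Cb}\partial\Omega$, the rigidly normalized one-parameter subgroups of Corollary~\ref{cor:special_bi} (eigenvalue $2$ on one line, $1/2$ on the other, identity on the transverse codimension-two intersection) converge \emph{without degenerating} to a bi-proximal element of the closed group $\Aut_0(\Omega)$ whose fixed points are exactly $x$ and $y$. Triviality of the fiber (your closedness of $E$) is then deduced by pairing an arbitrary limit point $x$ with an honest $x^+_{\gamma}$, $\gamma\in\Gamma$, whose fiber is already trivial by Theorem~\ref{thm:bi_prox}, so that the transversality hypothesis is automatic.

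Two further gaps. First, to run the ping-pong construction (the paper's Lemma~\ref{lem:str_1}) and to arrange the transversality above, one must be able to produce bi-proximal elements of $\Gamma$ whose fixed points avoid a prescribed finite set; this requires ruling out the degenerate situation where $\Gamma$ stabilizes a boundary point, and the paper needs Lemma~\ref{lem:u_1} (a nontrivial almost unipotent element of $\Gamma$ cannot fix any $x^+_{\psi}$, proved via the injectivity-radius bound of Corollary~\ref{cor:inj_rad}) together with Lemma~\ref{lem:str_4}. Your proposal is silent on almost unipotent elements. Second, your route to the two-point statement via chains of applications of Corollary~\ref{cor:bi_3} is circular: that corollary only moves points within $L\cap\partial\Omega$ for a line $L$ already known to be the axis of a bi-proximal automorphism, and a priori only the countably many axes of $\Gamma$-conjugates of $\gamma_0$ qualify; reaching an arbitrary pair $(x,y)$ forces a limiting argument in $\Aut_0(\Omega)$, which is precisely the content of Proposition~\ref{prop:str_2} that your outline does not supply.
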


\begin{remark}
Suppose, for a moment, that $\Gamma$ is a torsion-free word hyperbolic group and $\partial \Gamma$ is the Gromov boundary. Then it is well known that the pairs of attracting and repelling fixed points of elements of $\Gamma$ are dense in $\partial \Gamma \times \partial \Gamma$ (see for instance~\cite[Corollary 8.2.G]{G1987}). The proof of Theorem~\ref{thm:str} follows essentially the same argument, but with the additional technicalities coming from the possible existence of almost unipotent elements and the fact that $\Gamma$ is not necessarily hyperbolic. 
\end{remark}

Theorem~\ref{thm:str} will follow from the next three propositions. 

\begin{proposition}\label{prop:str_1}
 If $x \in \partial \Omega$ then $T_x^{\Cb} \partial \Omega \cap \overline{\{ x^+_{\gamma} : \gamma \in \Gamma \text{ is bi-proximal} \}}$ is non-empty.
\end{proposition}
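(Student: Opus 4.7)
The plan is to use cocompactness of the $\Gamma$-action together with a limit argument in $\End(\Cb^{d+1})$ to produce, for any $x \in \partial \Omega$, a limit of conjugate attracting fixed points that lies in $T_x^{\Cb} \partial \Omega$. Fix a bi-proximal $\gamma_0 \in \Gamma$ (which exists by Theorem~\ref{thm:exist_bi}), with fixed points $x^+, x^-$ and complex projective line $L$ through them. Observe that $\gamma \gamma_0 \gamma^{-1}$ is bi-proximal with attracting fixed point $\gamma \cdot x^+$ for every $\gamma \in \Gamma$, so the $\Gamma$-orbit of $x^+$ sits inside $\{x^+_\gamma : \gamma \in \Gamma \text{ bi-proximal}\}$.

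First I would use cocompactness to produce a sequence in $\Gamma$ and a basepoint with $\gamma_n \cdot k_\infty \to x$. Fix a compact $K \subset \Omega$ with $\Gamma K = \Omega$, pick $p_n \to x$ in $\Omega$, write $p_n = \gamma_n k_n$ with $k_n \in K$, and pass to a subsequence so $k_n \to k_\infty \in K$. The isometry identity $d_\Omega(\gamma_n k_\infty, \gamma_n k_n) = d_\Omega(k_\infty, k_n) \to 0$ combined with Lemma~\ref{lem:same_topo} gives $\gamma_n k_\infty \to x$ in $\Pb(\Cb^{d+1})$, and Proposition~\ref{prop:proper} forces $\gamma_n \to \infty$ in $\PSL(\Cb^{d+1})$. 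Next, choose representatives $\hat{\gamma}_n \in \GL(\Cb^{d+1})$ with $\norm{\hat{\gamma}_n} = 1$, and pass to a further subsequence so $\hat{\gamma}_n \to \hat{\gamma}_\infty \in \End(\Cb^{d+1})$, which is nonzero but has nontrivial kernel. Set $H = [\ker \hat{\gamma}_\infty]$ and $E = [\textrm{Im}\,\hat{\gamma}_\infty]$; for any $[v] \notin H$, $\gamma_n [v] \to [\hat{\gamma}_\infty v] \in E$. The central geometric input is that $E \cap \Omega = \emptyset$: for $[v] \in \Omega$ outside $H$, the limit $[\hat{\gamma}_\infty v] \in \overline{\Omega}$ cannot lie in $\Omega$ since otherwise Proposition~\ref{prop:proper} would keep $\gamma_n$ in a compact subset of $\Aut(\Omega)$; the cases $[v] \in \partial \Omega$ and $[v] \notin \overline{\Omega}$ follow from the $\gamma_n$-invariance of $\partial \Omega$ and $\Pb(\Cb^{d+1}) \setminus \overline{\Omega}$.

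Now take $p_0 \in L \cap \Omega$, which is nonempty by Theorem~\ref{thm:blow_up}. Assuming $p_0 \notin H$, the point $x^* := \lim_n \gamma_n p_0 = [\hat{\gamma}_\infty p_0]$ lies in $E$, and it lies in $\partial \Omega$ by Lemma~\ref{lem:same_topo} applied together with the bounded distance $d_\Omega(\gamma_n p_0, \gamma_n k_\infty) = d_\Omega(p_0, k_\infty)$. Proposition~\ref{prop:bd_behav} then places $x^* \in T_x^{\Cb} \partial \Omega$. The observation following the definition of complex tangent hyperplane, applied to the projective subspace $E$ containing $x^* \in \partial \Omega$ and disjoint from $\Omega$, gives $E \subset T_{x^*}^{\Cb} \partial \Omega$. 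Applying the same observation to the hyperplane $T_x^{\Cb} \partial \Omega$ (which passes through $x^* \in \partial \Omega$ and is disjoint from $\Omega$) forces $T_x^{\Cb} \partial \Omega \subset T_{x^*}^{\Cb} \partial \Omega$, and a dimension count yields $T_x^{\Cb} \partial \Omega = T_{x^*}^{\Cb} \partial \Omega$. Hence $E \subset T_x^{\Cb} \partial \Omega$. Since $p_0 \in L \setminus H$, $L$ is not contained in $H$, so $L \cap H$ is at most a single point and at least one of $x^+, x^-$ lies outside $H$, say $x^+$. Then $y := [\hat{\gamma}_\infty x^+] = \lim_n \gamma_n x^+$ lies in $E \subset T_x^{\Cb} \partial \Omega$, and each $\gamma_n x^+$ is the attracting fixed point of the bi-proximal $\gamma_n \gamma_0 \gamma_n^{-1} \in \Gamma$; so $y$ is the required point in $T_x^{\Cb} \partial \Omega \cap \overline{\{x^+_\gamma : \gamma \in \Gamma \text{ bi-proximal}\}}$.

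The main obstacle is justifying the step $p_0 \notin H$ (equivalently, $L \not\subset H$). I expect this to follow from the stronger statement $H \cap \Omega = \emptyset$, which should be provable by a dual analog of the argument for $E \cap \Omega = \emptyset$: apply the normalized-limit construction to the sequence ${}^t\hat{\gamma}_n^{-1}$ acting on $\Omega^*$ (invariant under the dual action, and compact by discussion in Section~\ref{sec:prelim}), obtain an image subspace disjoint from $\Omega^*$, and translate this back via the duality pairing to exclude $H$ from meeting $\Omega$. In the unlikely event $L \subset H$ cannot be ruled out directly, one could modify the sequence by replacing $\gamma_n$ with $\gamma_n \gamma_0^{m_n}$ for suitable $m_n$ to shift $H$ off of $L$, or extract the next-order behavior of $\hat{\gamma}_n$ on $\ker \hat{\gamma}_\infty$ via a secondary rescaling. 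Once this degeneracy is handled, the rest of the proof is a direct assembly of the tools from Sections~\ref{sec:prelim}--\ref{sec:constr_auto}.
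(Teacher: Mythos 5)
Your overall strategy is the same as the paper's: rescale a cocompactness sequence to a singular limit $\hat{\gamma}_\infty \in \End(\Cb^{d+1})$, show its image lies in $T_x^{\Cb}\partial\Omega$, and push the attracting fixed point of a fixed bi-proximal element through $\hat{\gamma}_\infty$. The only genuine issue is exactly the one you flag, namely ruling out $x^{\pm}_{\gamma_0} \in [\ker\hat{\gamma}_\infty]$, and your proposed fix via $[\ker\hat{\gamma}_\infty]\cap\Omega=\emptyset$ does work --- with one correction: you must take the limit of the normalized transposes ${}^t\hat{\gamma}_n$, not of ${}^t\hat{\gamma}_n^{-1}$, since $[\ker\hat{\gamma}_\infty]$ is the annihilator of $\mathrm{Im}({}^t\hat{\gamma}_\infty)$. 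Concretely: $\Omega^*$ is compact, ${}^t\gamma_n$-invariant, and by Proposition~\ref{prop:zar_den} not contained in a hyperplane, so some $f\in\Omega^*$ avoids $[\ker {}^t\hat{\gamma}_\infty]$; then $g:={}^t\hat{\gamma}_\infty f=\lim {}^t\gamma_n f$ lies in $\Omega^*$, and $g(p)=f(\hat{\gamma}_\infty p)\neq 0$ for every $p\in\Omega$ forces $[\ker\hat{\gamma}_\infty]\cap\Omega=\emptyset$. This is the same device as Claim 1 in the proof of Lemma~\ref{lem:weak_attract}, so it is fully available to you. With that in place the argument closes: $L\cap\Omega\neq\emptyset$ gives $L\not\subset[\ker\hat{\gamma}_\infty]$, one of $x^{\pm}_{\gamma_0}$ survives, and its image is a limit of the attracting fixed points $x^+_{\gamma_n\gamma_0^{\pm 1}\gamma_n^{-1}}$ lying in $E=[\mathrm{Im}\,\hat{\gamma}_\infty]\subset T_x^{\Cb}\partial\Omega$.

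The paper resolves the kernel difficulty differently: instead of showing $[\ker\hat{\gamma}_\infty]$ misses $\Omega$, it shows that the attracting fixed points of bi-proximal elements of $\Gamma$ span all of $\Cb^{d+1}$ (their span $[W]$ is $\Gamma$-invariant, meets $\Omega$ by Theorem~\ref{thm:bi_prox}, and equals everything by a cohomological dimension count using that proper $\Cb$-convex open sets are balls), so some attracting fixed point automatically avoids the proper subspace $[\ker\hat{\gamma}_\infty]$. Your route buys the stronger statement that $[\ker\hat{\gamma}_\infty]$ is disjoint from $\Omega$ at the cost of the duality argument; the paper's buys the spanning statement. Also, the paper obtains $E\subset T_x^{\Cb}\partial\Omega$ more directly, by noting that $\hat{\gamma}_\infty$ maps the open set $\Omega\setminus[\ker\hat{\gamma}_\infty]$ into $T_x^{\Cb}\partial\Omega$ via Proposition~\ref{prop:bd_behav} and that the image of an open set spans the image of the linear map; your detour through the point $x^*$ and the tangent-hyperplane observation reaches the same conclusion.
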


\begin{proposition}\label{prop:str_2}
If $x,y \in \overline{\{ x^+_{\gamma} : \gamma \in \Gamma \text{ is bi-proximal} \}}$ and $T_x^{\Cb} \partial \Omega \neq T_y^{\Cb} \partial \Omega$ then there exists $\varphi \in \Aut_0(\Omega)$ bi-proximal such that $x=x^+_{\varphi}$ and $y=x^-_{\varphi}$.
\end{proposition}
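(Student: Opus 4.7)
The plan is to approximate $x$ and $y$ by attracting fixed points of bi-proximal elements in $\Gamma$, use a ping-pong construction to produce bi-proximal elements of $\Aut_0(\Omega)$ whose attracting/repelling pairs converge to $(x,y)$, and then extract a limit living in $\Aut_0(\Omega)$ with the correct fixed points.

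First, using the hypothesis, I would choose bi-proximal $\gamma_n, \eta_n \in \Gamma$ with $x^+_{\gamma_n} \to x$ and $x^+_{\eta_n} \to y$; by compactness of $\partial\Omega$, passing to subsequences also arranges $x^-_{\gamma_n} \to x'$ and $x^-_{\eta_n} \to y'$ for some $x',y' \in \partial\Omega$. Then I apply Corollary~\ref{cor:special_bi} to $\gamma_n$ and $\eta_n$ to obtain bi-proximal elements $A_n, B_n \in \Aut_0(\Omega)$ having the same fixed points as $\gamma_n$ and $\eta_n$.

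Next, I invoke the dynamical description in Theorem~\ref{thm:bi_prox}(4): $A_n^k$ collapses the complement of a small neighborhood of $x^-_{\gamma_n}$ into an arbitrarily small neighborhood of $x^+_{\gamma_n}$ as $k \to \infty$, and symmetrically $B_n^{-k}$ pushes almost everything toward $x^-_{\eta_n}$ while fixing a neighborhood of $x^+_{\eta_n}$ as the repeller. A standard ping-pong argument then yields that $\varphi_n := A_n^{k_n} B_n^{-k_n}$ is bi-proximal for $k_n$ sufficiently large, with attracting fixed point $p_n$ near $x^+_{\gamma_n}$ and repelling fixed point $q_n$ near $x^+_{\eta_n}$; by a diagonal choice of $k_n$ we can arrange $p_n \to x$ and $q_n \to y$, with $\varphi_n \in \Aut_0(\Omega)$ throughout. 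To promote this to a single element with \emph{exact} fixed points, apply Corollary~\ref{cor:special_bi} once more — this time to each $\varphi_n$ — and let $\chi_n \in \Aut_0(\Omega)$ be the time-$1$ element of the resulting one-parameter subgroup. Then $\chi_n$ has fixed points $p_n, q_n$, eigenvalues $e^{\pm 1}$ on these two directions, and acts as the identity on $T^\Cb_{p_n}\partial\Omega \cap T^\Cb_{q_n}\partial\Omega$, so its norm in $\SL(\Cb^{d+1})$ is uniformly bounded. Using $C^1$ regularity of $\partial\Omega$ the tangent hyperplanes $T^\Cb_{p_n}\partial\Omega$ and $T^\Cb_{q_n}\partial\Omega$ converge to $T^\Cb_x\partial\Omega$ and $T^\Cb_y\partial\Omega$; the hypothesis $T^\Cb_x\partial\Omega \neq T^\Cb_y\partial\Omega$ then keeps the limiting data $(x, y, T^\Cb_x\partial\Omega \cap T^\Cb_y\partial\Omega)$ non-degenerate, so $\chi_n$ converges in $\PSL(\Cb^{d+1})$ to a bi-proximal $\chi \in \Aut_0(\Omega)$ (using closedness of $\Aut_0(\Omega)$) with $x^+_\chi = x$ and $x^-_\chi = y$.

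The main obstacle is exactly this last convergence. For the limit matrix $\chi$ to have full rank and the advertised fixed-point structure, one needs not only $T^\Cb_x \neq T^\Cb_y$ but also $x \notin T^\Cb_y\partial\Omega$ and $y \notin T^\Cb_x\partial\Omega$, since otherwise the line through $x$ and $y$ meets $T^\Cb_x \cap T^\Cb_y$ and the complementary direct-sum decomposition underlying the form of $\chi_n$ from Corollary~\ref{cor:special_bi} collapses in the limit. The natural way around this is to exploit that $x$ and $y$ are limits of attracting fixed points of bi-proximal elements — whose tangent hyperplanes meet $\partial\Omega$ only at a single point by Theorem~\ref{thm:bi_prox}(2)--(3) — and to use Proposition~\ref{prop:bd_behav} to transfer this one-point property to $x$ and $y$, thereby ruling out $x \in T^\Cb_y\partial\Omega$ or $y \in T^\Cb_x\partial\Omega$ under the standing hypothesis that the two tangent hyperplanes differ.
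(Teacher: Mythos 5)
Your overall strategy --- approximate $x$ and $y$ by attracting fixed points of bi-proximal elements of $\Gamma$, ping-pong to produce a bi-proximal element whose attracting/repelling pair is close to $(x,y)$, normalize via Corollary~\ref{cor:special_bi}, and pass to the limit using $T_x^{\Cb}\partial\Omega \neq T_y^{\Cb}\partial\Omega$ for non-degeneracy --- is the paper's strategy, but there is a genuine gap in the ping-pong step. To run $\varphi_n = A_n^{k_n}B_n^{-k_n}$ you need the four fixed points $x^+_{\gamma_n}, x^-_{\gamma_n}, x^+_{\eta_n}, x^-_{\eta_n}$ to admit pairwise disjoint neighborhoods, uniformly enough that the attracting/repelling pair of $\varphi_n$ converges to $(x,y)$. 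Nothing in your setup prevents degenerate configurations: for instance $x^-_{\gamma_n}$ and $x^-_{\eta_n}$ may converge to the same boundary point (then $B_n^{-k}$ pushes mass onto the repeller of $A_n$ and the composition is no longer a contraction toward $x^+_{\gamma_n}$), or $x^-_{\eta_n}$ may converge to $x$, etc. The paper spends real effort exactly here: it proves Lemma~\ref{lem:str_4} (for any finite set of boundary points there is a bi-proximal element of $\Gamma$ whose fixed points avoid it --- a lemma that itself needs the analysis of almost unipotent elements in Lemma~\ref{lem:u_1} and a separate argument when every element of $\Gamma$ is bi-proximal), and uses it with Lemma~\ref{lem:str_1} to first replace $\gamma_n, \eta_n$ by elements whose repelling points converge to two auxiliary points $x^{\pm}_{\varphi}$ disjoint from $\{x,y\}$, reducing to the general-position case. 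Your proposal needs this reduction and does not supply it. A smaller issue: you run the ping-pong with $A_n, B_n \in \Aut_0(\Omega)$ rather than inside $\Gamma$; the dichotomy ``bi-proximal or almost unipotent'' and Lemma~\ref{lem:weak_attract} do apply to all of $\Aut(\Omega)$, but Lemma~\ref{lem:weak_attract} requires $\{\psi^m\}$ unbounded, which is automatic for non-trivial elements of a discrete torsion-free $\Gamma$ but needs justification for a product in $\Aut_0(\Omega)$; the paper sidesteps this by doing the ping-pong in $\Gamma$ and only afterwards invoking Corollary~\ref{cor:special_bi}.

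The ``main obstacle'' you flag at the end is not actually an obstacle, and your proposed resolution is both unnecessary and circular. If $x \in T^{\Cb}_y\partial\Omega$, then $T^{\Cb}_y\partial\Omega$ is a complex hyperplane containing the boundary point $x$ and disjoint from $\Omega$, i.e.\ a complex tangent hyperplane at $x$; since $\partial\Omega$ is $C^1$ the complex tangent hyperplane at $x$ is unique, so $T^{\Cb}_y\partial\Omega = T^{\Cb}_x\partial\Omega$, contradicting the hypothesis. Hence $x \notin T^{\Cb}_y\partial\Omega$ and $y \notin T^{\Cb}_x\partial\Omega$ come for free, and the decomposition $\hat{x}\oplus\hat{y}\oplus(\widehat{H}_x\cap\widehat{H}_y)$ of $\Cb^{d+1}$ is non-degenerate in the limit. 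By contrast, the route you suggest --- transferring the one-point property $T^{\Cb}_{x^+_{\gamma}}\partial\Omega \cap \partial\Omega = \{x^+_{\gamma}\}$ from approximants to the limit point $x$ --- is essentially Proposition~\ref{prop:str_3}, which the paper proves \emph{using} Proposition~\ref{prop:str_2}; invoking it here would be circular.
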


\begin{proposition}
\label{prop:str_3}
If $x\in \overline{\{ x^+_{\gamma} : \gamma \in \Gamma \text{ is bi-proximal} \}}$ then $T_x^{\Cb} \partial \Omega \cap \partial \Omega = \{x\}$.
\end{proposition}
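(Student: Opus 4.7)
I'd argue by contradiction, exploiting the $C^1$ convergence of complex tangent hyperplanes together with Theorem~\ref{thm:bi_prox}(2). If $x = x^+_\gamma$ for some bi-proximal $\gamma \in \Gamma$ then the conclusion is immediate from Theorem~\ref{thm:bi_prox}(2), so I may assume $x = \lim_{n \to \infty} x^+_{\gamma_n}$ for bi-proximal $\gamma_n \in \Gamma$ with $x^+_{\gamma_n} \neq x$. Set $H := T^{\Cb}_x \partial\Omega$ and $H_n := T^{\Cb}_{x^+_{\gamma_n}}\partial\Omega$. Since $\partial\Omega$ is $C^1$, the assignment $p \mapsto T^{\Cb}_p\partial\Omega$ is continuous, so $H_n \to H$ in the Grassmannian of complex hyperplanes of $\Pb(\Cb^{d+1})$. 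Suppose for contradiction that some $y \in H \cap \partial\Omega$ has $y \neq x$.

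First consider the transverse case $T^{\Cb}_y \partial\Omega \neq H$. The complex hyperplane $H$ then meets the $C^1$ real hypersurface $\partial\Omega$ transversely at $y$, so locally near $y$ the intersection $H \cap \partial\Omega$ is a $C^1$ real submanifold of dimension $2d - 3$ (the proposition is trivial when $d = 1$ since then $T^{\Cb}_x\partial\Omega = \{x\}$ automatically, so assume $d \geq 2$). Because transversality of submanifolds is open under small perturbation and $H_n \to H$, for $n$ large $H_n$ is likewise transverse to $\partial\Omega$ throughout a fixed neighborhood of $y$, and the implicit function theorem produces a non-empty real $(2d-3)$-dimensional piece of $H_n \cap \partial\Omega$ inside that neighborhood. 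This contradicts Theorem~\ref{thm:bi_prox}(2), which asserts $H_n \cap \partial\Omega = \{x^+_{\gamma_n}\}$: indeed $x^+_{\gamma_n} \to x \neq y$, so this single intersection point eventually lies outside the chosen neighborhood of $y$.

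The degenerate case $T^{\Cb}_y \partial\Omega = H$ is the principal obstacle. My plan here is to invoke Proposition~\ref{prop:str_1} at $y$, which supplies a point $y' \in H \cap \overline{\{x^+_\gamma : \gamma \in \Gamma \text{ bi-proximal}\}}$. If $T^{\Cb}_{y'}\partial\Omega \neq H$, then the transverse argument above applied at $y'$ in place of $y$ again gives a contradiction (note that $y' \neq x$ in this subcase, since $y' = x$ would force $T^{\Cb}_{y'} = H$). If instead $T^{\Cb}_{y'}\partial\Omega = H$ and $y' \neq x$, then $x$ and $y'$ both lie in $\overline{\{x^+_\gamma\}}$ with coinciding tangent hyperplane $H$, so Proposition~\ref{prop:str_2} does not directly apply. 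Ruling out this fully degenerate configuration is the hard technical step; the strategy I would pursue is to show that its persistence would force the closed $\Gamma$-invariant set $\overline{\{x^+_\gamma\}} \cap H$ to witness $\Gamma$-stabilization of the complex hyperplane $H$ (since tangent hyperplanes are permuted by $\Gamma$), placing $\Gamma$ inside a proper parabolic subgroup of $\PSL(\Cb^{d+1})$ and contradicting, via cocompactness together with Theorem~\ref{thm:exist_bi}, the fact that $\Gamma$ contains bi-proximal elements whose attracting fixed points cannot all share a single tangent hyperplane.
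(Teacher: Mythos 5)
Your case division collapses: the transverse case is vacuous. Since $H = T_x^{\Cb}\partial\Omega$ is a complex tangent hyperplane, $H \cap \Omega = \emptyset$, so for any $y \in H \cap \partial\Omega$ the Observation in Section~\ref{sec:prelim} forces $H \subset T_y^{\Cb}\partial\Omega$ and hence $H = T_y^{\Cb}\partial\Omega$. (Equivalently: if $H$ met $\partial\Omega$ transversally at $y$ it would contain points of $\Omega$ arbitrarily close to $y$, which is impossible --- one never even needs the approximating hyperplanes $H_n$.) The same remark applies to the point $y'$ you extract from Proposition~\ref{prop:str_1}, so every point of $H \cap \partial\Omega$ lands in what you call the ``fully degenerate configuration.'' That configuration is therefore not an edge case but the entire content of the proposition, and for it you offer only a strategy (``force $\Gamma$-stabilization of $H$\dots'') rather than an argument. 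The strategy itself is doubtful: $\overline{\{x^+_\gamma\}}$ is $\Gamma$-invariant but $\overline{\{x^+_\gamma\}} \cap H$ is not, and nothing in the setup suggests that $\Gamma$, or even an infinite subgroup of it, preserves the single hyperplane $H$.

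The missing idea is to realize $x$ itself as the attracting fixed point of a bi-proximal element of $\Aut_0(\Omega)$, after which part (2) of Theorem~\ref{thm:bi_prox} (which applies to any bi-proximal element of $\Aut(\Omega)$, not just elements of $\Gamma$) gives the conclusion outright. The paper does this as follows: write $x = \lim x^+_{\gamma_n}$, pass to a subsequence so that $x^-_{\gamma_n} \to x'$, and use Lemma~\ref{lem:str_4} to pick a bi-proximal $\gamma \in \Gamma$ with $x, x', x^+_{\gamma}, x^-_{\gamma}$ all distinct; Lemma~\ref{lem:str_1} then produces bi-proximal $\phi_n \in \Gamma$ with $x^+_{\phi_n} \to x$ and $x^-_{\phi_n} \to x^+_{\gamma}$. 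Since $T_{x^+_{\gamma}}^{\Cb}\partial\Omega \cap \partial\Omega = \{x^+_{\gamma}\}$ already holds for the honest group element $\gamma$ by Theorem~\ref{thm:bi_prox}, and $x \neq x^+_{\gamma}$, the tangent hyperplanes at $x$ and at $x^+_{\gamma}$ must differ, so Proposition~\ref{prop:str_2} applies to the pair $(x, x^+_{\gamma})$ and yields a bi-proximal $\varphi \in \Aut_0(\Omega)$ with $x^+_{\varphi} = x$. This is the step your proposal has no substitute for.
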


Delaying the proof of the propositions, we prove Theorem~\ref{thm:str}

\begin{proof}[Proof of Theorem~\ref{thm:str}]
First suppose that $x \in \partial \Omega$, then by Proposition~\ref{prop:str_1} there exists
\begin{align*}
z \in T_x^{\Cb} \partial \Omega \cap \overline{\{ x^+_{\gamma} : \gamma \in \Gamma \text{ is bi-proximal} \}}.
\end{align*}
Since $T_x^{\Cb} \partial \Omega$ is a complex tangent hyperplane containing $z$ and $\partial \Omega$ is $C^1$, $T_z^{\Cb} \partial \Omega = T_x^{\Cb} \partial \Omega$. But by Proposition~\ref{prop:str_3} 
\begin{align*}
 \{z\} = T_z^{\Cb} \partial \Omega \cap \partial \Omega = T_x^{\Cb} \partial \Omega \cap \partial \Omega
\end{align*}
and thus $x=z$. Since $x \in \partial \Omega$ was arbitrary $\partial \Omega =   \overline{\{ x^+_{\gamma} : \gamma \in \Gamma \text{ is bi-proximal} \}}$. Then by Proposition~\ref{prop:str_3}, $\Omega$ is strictly $\Cb$-convex. 

Now suppose $x,y  \in \partial \Omega= \overline{\{ x^+_{\gamma} : \gamma \in \Gamma \text{ is bi-proximal} \}}$ then by Proposition~\ref{prop:str_3} 
\begin{align*}
T_x^{\Cb} \partial \Omega \cap \partial \Omega = \{x\} \text{ and } T_y^{\Cb} \partial \Omega \cap \partial \Omega = \{y\}.
\end{align*}
In particular, if $x \neq y$ then $T_x^{\Cb} \partial \Omega \neq T_y^{\Cb} \partial \Omega$ and so by Proposition~\ref{prop:str_2} there exists $\varphi \in \Aut_0(\Omega)$ is bi-proximal such that $x=x^+_{\varphi}$ and $y=x^-_{\varphi}$. 
\end{proof}

\subsection{Proof of Proposition~\ref{prop:str_1}}

Fix $x \in \partial \Omega$ and $p_n \in \Omega$ such that $p_n \rightarrow x$. Fix a base point $o \in \Omega$. Then since $\Gamma$ acts co-compactly on $\Omega$ there exists $R<+\infty$ and $\varphi_n \in \Gamma$ such that $d_{\Omega}(\varphi_n o, p_n) < R$. By Proposition~\ref{prop:bd_behav}: if $q \in \Omega$ then any limit point of $\{ \varphi_n q\}_{n \in \Nb}$ is in $\partial \Omega \cap T_x^{\Cb} \partial \Omega$.

Now let $\hat{\varphi}_n \in \GL(\Cb^{d+1})$ be representatives of $\varphi_n \in \PSL(\Cb^{d+1})$ such that $\norm{\hat{\varphi}_n}=1$. By passing to a subsequence we may suppose $\hat{\varphi}_n \rightarrow \varphi \in \End(\Cb^{d+1})$. By construction, if $q \in \Pb(\Cb^{d+1})\setminus [\ker \varphi]$ then $\varphi(q) = \lim_{n \rightarrow \infty} \varphi_n (q)$.  In particular $\varphi(\Omega \setminus (\Omega \cap [\ker \varphi])) \subset T_x^{\Cb} \partial \Omega$.  As $\Omega$ is an open set, this implies that $\varphi(\Cb^{d+1}) \subset T_x^{\Cb} \partial \Omega$.  

We now claim that there exists $\gamma \in \Gamma$ bi-proximal such that $x^+_{\gamma} \notin [\ker \varphi]$. To see this, let
\begin{align*}
W:=\textrm{Span}(  x^+_{\gamma} : \gamma \in \Gamma \text{ is bi-proximal} ).
\end{align*}
Since $\phi x^+_{\gamma} = x^+_{\phi \gamma \phi^{-1}}$ we see that $[W]$ is $\Gamma$-invariant. Now suppose $\gamma \in \Gamma$ is bi-proximal, then $x^+_{\gamma} \in [W]$ and so either $[W] \cap \Omega \neq \emptyset$ or $[W] \subset T_{x^+_{\gamma}}^{\Cb} \partial \Omega$. In the latter case
\begin{align*}
[W] \cap \partial \Omega \subset T_{x^+_{\gamma}}^{\Cb} \partial \Omega \cap \partial \Omega = \{x^+_{\gamma}\}
\end{align*}
by Theorem~\ref{thm:bi_prox}. Since $[W]$ contains $x^-_{\gamma}=x^+_{\gamma^{-1}}$ this case is impossible. Thus $[W]\cap \Omega$ is non-empty and so by definition $\Omega^\prime=[W] \cap \Omega$ is a $\Cb$-convex open set in $[W]$. Since $[W]$ is $\Gamma$-invariant, we see that $\Gamma$ acts co-compactly and properly on $\Omega^\prime$. Now it is well known that a proper $\Cb$-convex open set is homeomorphic to an open ball (see for instance~\cite[Theorem 2.4.2]{APS2004}). Thus, by cohomological dimension considerations, we must have that $W=\Cb^{d+1}$.

Since $W=\Cb^{d+1}$ there exists $\gamma \in \Gamma$ bi-proximal such that $x^+_{\gamma} \notin [\ker \varphi]$. Then $\varphi(x^+_{\gamma}) \in T_x^{\Cb} \partial \Omega$ and as 
\begin{align*}
\varphi(x^+_{\gamma}) = \lim_{n \rightarrow\infty} \varphi_n x^+_{\gamma} = \lim_{n \rightarrow\infty}  x^+_{\varphi_n \gamma \varphi_n^{-1}}
\end{align*}
 we see that $T_x^{\Cb} \partial \Omega \cap  \overline{\{ x^+_{\gamma} : \gamma \in \Gamma \text{ is bi-proximal} \}} \neq \emptyset$. 

\subsection{Proof of Proposition~\ref{prop:str_2} and Proposition~\ref{prop:str_3}} We will need to know a little about the action of almost unipotent elements on the boundary $\partial \Omega$.

\begin{lemma}
\label{lem:u_1}
If $u \in \Gamma \setminus \{1\}$ is almost unipotent and $\psi \in \Aut(\Omega)$ is bi-proximal then $x^+_{\psi}$ is not a fixed point of $u$.
\end{lemma}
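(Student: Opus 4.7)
The plan is to derive a contradiction with Corollary~\ref{cor:inj_rad} (we may assume $\Gamma$ is torsion-free, as in Theorem~\ref{thm:str}). Assuming for contradiction that $u(x^+_\psi) = x^+_\psi$, I will produce a point $p_0 \in \Omega$ and a sequence of isometries $\psi_{t_n} \in \Aut_0(\Omega)$ with $d_\Omega(u\psi_{t_n}p_0,\psi_{t_n}p_0) \to 0$, contradicting the uniform lower bound on the translation length of $u \in \Gamma\setminus\{1\}$.

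By Corollary~\ref{cor:special_bi} I would replace $\psi$ by a one-parameter subgroup $\{\psi_t\}_{t\in\Rb} \subset \Aut_0(\Omega)$ of bi-proximal elements with the same $x^\pm_\psi$ and the standard form $\psi_t = \mathrm{diag}(e^t,e^{-t},1,\ldots,1)$ in a basis $e_1,\ldots,e_{d+1}$ adapted to $x^+_\psi$, $x^-_\psi$, and $H^+ \cap H^-$. Since $u$ fixes $x^+_\psi$ and $\partial\Omega$ is $C^1$, the element $u$ also preserves the unique complex tangent hyperplane $H^+ = T^{\Cb}_{x^+_\psi}\partial\Omega$; in this basis this forces $u_{i1}=0$ for $i>1$ and $u_{2j}=0$ for $j \geq 3$. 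A direct computation gives $(v_t)_{ij} = ((\psi_t)_{jj}/(\psi_t)_{ii})\,u_{ij}$ for $v_t := \psi_{-t}u\psi_t$, and every off-block-diagonal entry decays like $e^{-t}$ or $e^{-2t}$ as $t \to +\infty$. Hence $v_t$ converges in the closed subgroup $\Aut(\Omega) \subset \PSL(\Cb^{d+1})$ to the block-diagonal element $u_\infty = \mathrm{diag}(u_{11},u_{22},u_A)$, where $u_A$ is the lower-right $(d-1)\times(d-1)$ block of $u$.

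The crux is showing $u_\infty$ has a fixed point inside $\Omega$. The complex projective line $L = \mathrm{span}(e_1,e_2) = \overline{x^+_\psi\, x^-_\psi}$ is $u_\infty$-invariant, and by Theorem~\ref{thm:blow_up}, $L \cap \Omega$ is the upper half plane in the affine chart $\{[1:z:0:\ldots:0]\}$. The restriction $u_\infty|_L$ is the M\"{o}bius map $z \mapsto (u_{22}/u_{11})z$, and preservation of the upper half plane forces $u_{22}/u_{11}$ to be a positive real number. On the other hand, expanding $\det(u-\lambda I)$ along the first column and then along the second row shows it factors as $(u_{11}-\lambda)(u_{22}-\lambda)\det(u_A - \lambda I)$, so $u_{11}$ and $u_{22}$ are eigenvalues of $u$; the almost unipotent hypothesis gives $|u_{11}| = |u_{22}| = 1$. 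Together these force $u_{22}/u_{11} = 1$, so $u_\infty$ acts as the identity on $L$.

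Fixing any $p_0 \in L\cap\Omega$, we then have $u_\infty p_0 = p_0$, and by Lemma~\ref{lem:same_topo} together with the continuity of the $\PSL$ action,
\[
d_\Omega(u\psi_t p_0, \psi_t p_0) \;=\; d_\Omega(v_t p_0, p_0) \;\longrightarrow\; d_\Omega(u_\infty p_0, p_0) \;=\; 0
\]
as $t \to +\infty$, contradicting Corollary~\ref{cor:inj_rad}. The main obstacle I anticipate is verifying the computation that identifies $u_\infty$ as block-diagonal (which requires both $u(x^+_\psi)=x^+_\psi$ and the preservation of $H^+$), and justifying that invariance of the upper half plane together with the unit-modulus condition really does force $u_\infty$ to act trivially on $L$.
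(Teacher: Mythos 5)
Your proposal is correct and follows essentially the same route as the paper: put $u$ in the normal form forced by fixing $x^+_{\psi}$ and $H^+$, conjugate by the one-parameter group from Theorem~\ref{thm:blow_up}/Corollary~\ref{cor:special_bi} to obtain a block-diagonal limit in the closed group $\Aut(\Omega)$, use invariance of the half-plane $L\cap\Omega$ together with the unit-modulus condition to see the limit acts trivially on $L$, and contradict Corollary~\ref{cor:inj_rad}. The only cosmetic differences are that you extract $\abs{u_{11}}=\abs{u_{22}}=1$ from the characteristic polynomial of $u$ itself (the paper instead notes the limit is almost unipotent as a limit of conjugates of $u$) and you get positivity of $u_{22}/u_{11}$ directly from half-plane invariance rather than passing to $u^2$.
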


\begin{remark} The fact that $\Gamma$ is a torsion free discrete group is critical here. \end{remark}

\begin{proof}
Suppose for a contradiction that there exists $\psi \in \Aut(\Omega)$ bi-proximal such that $u(x^+_{\psi}) = x^+_{\psi}$. Let $x^\pm := x^\pm_{\psi}$ and let $L$ be the complex projective line containing $x^+$ and $x^-$. Let $H^\pm$ be the complex tangent hyperplane to $\Omega$ at $x^\pm$. By Theorem~\ref{thm:blow_up} there exists coordinates such that
\begin{enumerate}
\item $x^+ = [1:0:\dots:0]$,
\item $x^- = [0:1:0: \dots :0]$,
\item $H^+ \cap H^- = \{ [0:0:z_1:\dots:z_{d-1}] : z_1, \dots, z_{d-1} \in \Cb\}$,
\item $ \Omega \cap L = \{ [ 1: z : 0 : \dots : 0] : \textrm{Im}(z) >0\}$,
\end{enumerate}
and $\Aut_0(\Omega)$ contains the automorphisms
\begin{align*}
a_t \cdot [z_1 : z_2 : \dots : z_{d+1}] = [e^t z_1 : e^{-t} z_2 : z_3 : \dots : z_{d+1}].
\end{align*}
Since $u$ fixes $x^+$ it also fixes $H^+=T_{x^+}^{\Cb} \partial \Omega$ and hence with respect to these coordinates $u$ is represented by a matrix of the form:
\begin{align*}
\begin{pmatrix}
1 & b & \vec{x}^t \\
0 & c & \vec{0}^t \\
\vec{0} & \vec{y} & A 
\end{pmatrix}.
\end{align*}
where $c \in \Cb$, $\vec{x},\vec{y} \in \Cb^{d-1}$, and $A$ is a $(d-1)$-by-$(d-1)$ matrix. Now a calculation shows that $u^\prime=\lim_{t \rightarrow \infty} a_{-t} u a_t$ exists in $\PSL(\Cb^{d+1})$ and  is represented by a matrix of the form:
\begin{align*}
\begin{pmatrix}
1 & 0 & 0\\
0 & c & \vec{0}^t \\
\vec{0} & \vec{0} & A 
\end{pmatrix}.
\end{align*}
Since $\Aut(\Omega)$ is closed in $\PSL(\Cb^d)$ we have that $u^\prime \in \Aut(\Omega)$. Since $u^\prime$ is the limit of almost unipotent elements, $u^\prime$ is almost unipotent and so $\abs{c}=1$. Since $u$ leaves $\Omega \cap L$ invariant $c \in \Rb$. Then by possibly replacing $u$ with $u^2$ we may assume that $c=1$. Then $u^\prime(z)=z$ for all $z \in L \cap \Omega$. Fix some $z \in \Omega \cap L$, then we have
\begin{align*}
\inf_{p \in \Omega} d_{\Omega}\left(u (p),p\right) \leq \lim_{t \rightarrow \infty} d_{\Omega}\left(u(a_t z), a_t z\right) = \lim_{t \rightarrow \infty} d_{\Omega}\left((a_{-t}ua_t)( z), z\right) =d_{\Omega}\left(u^\prime(z), z\right)=0.
\end{align*}
This contradicts Corollary~\ref{cor:inj_rad}. 
\end{proof}

We can use a standard argument to construct bi-proximal elements.
 
\begin{lemma}
\label{lem:str_1}
Suppose $\phi, \gamma \in \Gamma$ are bi-proximal. If $x^+_{\gamma}, x^-_{\gamma}, x^+_{\phi}, x^-_{\phi}$ are all distinct and $U^+, W^+$ are neighborhoods of $x^+_{\gamma}, x^+_{\phi}$ in $\overline{\Omega}$ then there exists $\psi \in \Gamma$ bi-proximal such that $x^+_{\psi} \in U^+$ and $x^-_{\psi} \in W^+$.
\end{lemma}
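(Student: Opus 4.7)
The plan is a standard dynamical ping-pong argument. Since $x^{\pm}_\gamma$ and $x^{\pm}_\phi$ are four distinct points, I would first choose pairwise disjoint open neighborhoods $A^{\pm}_\gamma, A^{\pm}_\phi \subset \overline{\Omega}$ of these four points with $\overline{A^+_\gamma} \subset U^+$ and $\overline{A^+_\phi} \subset W^+$. By Theorem~\ref{thm:bi_prox}(4) (whose proof actually establishes $\varphi^m x \to x^+_{\varphi}$ for every $x \in \overline{\Omega}\setminus\{x^-_{\varphi}\}$, so by compactness the inclusions extend from $\partial \Omega$ to $\overline{\Omega}$), there exist $M,N$ large enough that
\begin{align*}
\gamma^{\pm M}(\overline{\Omega}\setminus A^{\mp}_\gamma) \subset A^{\pm}_\gamma \qquad \text{and} \qquad \phi^{\pm N}(\overline{\Omega}\setminus A^{\mp}_\phi) \subset A^{\pm}_\phi.
\end{align*}

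Set $\psi := \gamma^M \phi^{-N} \in \Gamma$. Pairwise disjointness then yields the ping-pong inclusions
\begin{align*}
\psi(\overline{\Omega}\setminus A^+_\phi) \subset A^+_\gamma \qquad \text{and} \qquad \psi^{-1}(\overline{\Omega}\setminus A^+_\gamma) \subset A^+_\phi,
\end{align*}
because, for instance, $\phi^{-N}(\overline{\Omega}\setminus A^+_\phi) \subset A^-_\phi \subset \overline{\Omega}\setminus A^-_\gamma$, so applying $\gamma^M$ lands in $A^+_\gamma$; the other inclusion is symmetric. Note that $\psi \ne 1$ since $\gamma^M$ and $\phi^N$ have disjoint fixed-point pairs, and because $\Gamma$ is torsion-free and discrete, $\psi$ has infinite order, so $\{\psi^n\}_{n \in \Nb}$ is unbounded in $\PSL(\Cb^{d+1})$.

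The remaining and most delicate step, which I expect to be the main obstacle, is to show $\psi$ is bi-proximal rather than almost unipotent. Since $\{\psi^n\}$ is unbounded, Lemma~\ref{lem:weak_attract} produces $z^{\pm} \in \partial \Omega$ with $E^{\pm}(\psi) = z^{\pm}$, and by Proposition~\ref{prop:attracting}(1) there is a hyperplane $H \subsetneq \Pb(\Cb^{d+1})$ such that $\psi^n(y) \to z^+$ for $y \notin H$. Picking any $y \in \Omega \setminus (A^+_\phi \cup H)$ (non-empty since $\Omega$ is open and $H$ is a proper projective subspace), the ping-pong traps $\{\psi^n(y)\}_{n\geq 1}$ inside $A^+_\gamma$, which forces $z^+ \in \overline{A^+_\gamma} \subset U^+$; symmetrically $z^- \in \overline{A^+_\phi} \subset W^+$. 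By Theorem~\ref{thm:bi_prox}, $\psi$ is either bi-proximal or almost unipotent. To rule out the latter, observe that for almost unipotent $\psi$ all eigenvalues have modulus one, so $\psi$ and $\psi^{-1}$ have identical Jordan-block structures with shared eigenvectors; in particular $E^+(\psi) = E^-(\psi)$, forcing $z^+ = z^-$ and contradicting $\overline{A^+_\gamma} \cap \overline{A^+_\phi} = \emptyset$. Therefore $\psi$ is bi-proximal with $x^+_\psi = z^+ \in U^+$ and $x^-_\psi = z^- \in W^+$, as required.
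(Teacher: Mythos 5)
Your proof is correct and follows essentially the same route as the paper: disjoint neighborhoods of the four fixed points, north--south dynamics from Theorem~\ref{thm:bi_prox}(4) to get the ping-pong inclusions for $\psi=\gamma^M\phi^{-N}$, Lemma~\ref{lem:weak_attract} and Proposition~\ref{prop:attracting}(1) to trap $E^{\pm}(\psi)$ in the two disjoint neighborhoods, and the observation that $E^+(\psi)=E^-(\psi)$ for almost unipotent elements to rule that case out. You are in fact slightly more careful than the paper in spelling out why $\psi\neq 1$, why $\{\psi^n\}$ is unbounded, and why the attracting dynamics hold on all of $\overline{\Omega}\setminus\{x^-_{\varphi}\}$ rather than just $\partial\Omega$.
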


\begin{proof}
Pick neighborhoods $U^-,W^-$ of $x^-_{\gamma}, x^-_{\phi}$ in $\overline{\Omega}$ and by possibly shrinking $U^\pm, W^\pm$ assume that $\overline{U^+},\overline{U^-},\overline{W^+},\overline{W^-}$ are all disjoint. Let $\Oc= W^+ \cup W^- \cup U^+ \cup U^-$. Then by Theorem~\ref{thm:bi_prox}, there exists $m$ such that $\gamma^{ \mp m}( \Oc \setminus U^\pm) \subset U^\mp$ and $\phi^{ \mp m}(\Oc \setminus W^\pm) \subset W^\mp$. So if $\psi = \gamma^m\phi^{-m}$ then $\psi(U^+) \subset U^+$ and $\psi^{-1}(W^+) \subset W^+$. By Theorem~\ref{thm:bi_prox}, $\psi$ is either almost unipotent or bi-proximal. Moreover by Lemma~\ref{lem:weak_attract}, $E^{\pm}(\psi) = x^{\pm}$ for some $x^\pm \in \partial \Omega$. Since $\psi(U^+) \subset U^+$ and $\psi^{-1}(W^+) \subset W^+$,  part (1) of Proposition~\ref{prop:attracting} implies that $x^+ \in \overline{U^+}$ and $x^- \in \overline{W^+}$. Since $\overline{U^+}$ and $\overline{W^+}$ are disjoint this implies that $x^+ \neq x^-$. Thus $\psi$ is not almost unipotent.
\end{proof}

The next lemma constructs even more bi-proximal elements.

\begin{lemma}
\label{lem:str_4}
Suppose $x_1,\dots, x_m \in \partial \Omega$ are distinct, then there exists $\gamma \in \Gamma$ bi-proximal such that $x_1,\dots, x_m,x^+_{\gamma}, x^-_{\gamma}$ are all distinct.
\end{lemma}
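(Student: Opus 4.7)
My plan is to realize the desired $\gamma$ as a conjugate of a fixed bi-proximal element, ruling out the sole obstruction by B.~H.~Neumann's Lemma on coverings of a group by cosets. I fix any bi-proximal $\gamma_0 \in \Gamma$ (which exists by Theorem~\ref{thm:exist_bi}). For each $\phi \in \Gamma$ the conjugate $\phi\gamma_0\phi^{-1}$ is bi-proximal with fixed points $\phi x^\pm_{\gamma_0}$, so it suffices to find $\phi \in \Gamma$ for which both $\phi x^+_{\gamma_0}$ and $\phi x^-_{\gamma_0}$ lie outside $\{x_1,\dots,x_m\}$. The set of ``bad'' $\phi$'s is the finite union
\[
\mathcal{B} := \bigcup_{i=1}^{m}\bigcup_{\epsilon \in \{+,-\}} \bigl\{\phi \in \Gamma : \phi x^\epsilon_{\gamma_0} = x_i\bigr\},
\]
consisting of at most $2m$ cosets of the two subgroups $\Lambda^\pm := \textrm{Stab}_\Gamma(x^\pm_{\gamma_0})$. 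By B.~H.~Neumann's Lemma, if $\mathcal{B} = \Gamma$ then one of $\Lambda^+, \Lambda^-$ has finite index in $\Gamma$; after replacing $\gamma_0$ by $\gamma_0^{-1}$ if necessary, I may assume it is $\Lambda^+$. The task thus reduces to contradicting this assumption.

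Suppose toward contradiction that $\Lambda^+$ has finite index in $\Gamma$. Using Corollary~\ref{cor:fin_gen}, I pass to a torsion-free, finite-index subgroup $\Gamma' \leq \Lambda^+$ still dividing $\Omega$, and after raising $\gamma_0$ to a positive power arrange $\gamma_0 \in \Gamma'$; set $p := x^+_{\gamma_0}$, so every element of $\Gamma'$ fixes $p$. By Lemma~\ref{lem:u_1} applied with $\psi = \gamma_0$, no nontrivial almost unipotent element of $\Gamma'$ can fix $p$, so Theorem~\ref{thm:bi_prox} forces every nontrivial $\alpha \in \Gamma'$ to be bi-proximal; furthermore, the block-diagonal analysis in Section~\ref{sec:constr_auto} shows that the only boundary fixed points of such an $\alpha$ are $\{x^+_\alpha, x^-_\alpha\}$ (its other eigenvectors lie in the intersection of its two complex tangent hyperplanes, which meets $\partial\Omega$ only at $\{x^\pm_\alpha\}$ by Theorem~\ref{thm:bi_prox}), and consequently $p \in \{x^+_\alpha, x^-_\alpha\}$.

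My plan is then to extract the contradiction from the centralizer $C_{\Gamma'}(\gamma_0)$. Any $\alpha \in C_{\Gamma'}(\gamma_0)$ preserves $\{x^+_{\gamma_0}, x^-_{\gamma_0}\}$, and since $\alpha p = p$ it must also fix $q := x^-_{\gamma_0}$; hence $C_{\Gamma'}(\gamma_0)$ preserves the complex line $L$ through $p, q$. By Theorem~\ref{thm:blow_up} and Corollary~\ref{cor:bi_2}, $L \cap \Omega$ is the upper half plane with its Poincar\'e metric, and the subgroup of $\PSL(\Rb^2)$ fixing both endpoints of the corresponding axis is the one-parameter group of hyperbolic translations, isomorphic to $\Rb$. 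The induced action of $C_{\Gamma'}(\gamma_0)$ on $L$ is faithful: a nontrivial $\alpha$ acting trivially on $L$ would have equal eigenvalues at $p$ and $q$, contradicting the strict inequalities $\sigma_d(\alpha) < \sigma_{d+1}(\alpha)$ and $\sigma_1(\alpha) < \sigma_2(\alpha)$ coming from bi-proximality. Therefore $C_{\Gamma'}(\gamma_0)$ embeds into $\Rb$ and is infinite cyclic.

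The hard part will be converting this centralizer information into an honest cocompactness contradiction. Since $\Omega$ is contractible and $\Gamma'$ acts freely, properly, and cocompactly, $\Gamma'\backslash\Omega$ is a closed aspherical manifold of real dimension $2d\geq 2$, so $\Gamma'$ has cohomological dimension $2d$. My hope is that the Busemann-type character $\log|\lambda(\alpha)|$ defined by ${}^t\alpha f^+ = \lambda(\alpha) f^+$ for $f^+$ with $[\ker f^+] = T^{\Cb}_p \partial\Omega$, together with Proposition~\ref{prop:trans_dist} and the \v{S}varc--Milnor quasi-isometry from Theorem~\ref{thm:fundgeomgp}, will force $\Gamma'$ to be virtually cyclic, contradicting $\mathrm{cd}(\Gamma') = 2d \geq 2$. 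Granting this step, $\mathcal{B} \neq \Gamma$, and any $\phi \in \Gamma \setminus \mathcal{B}$ yields $\gamma := \phi\gamma_0\phi^{-1}$ as the desired bi-proximal element with $x^\pm_\gamma \notin \{x_1,\dots,x_m\}$.
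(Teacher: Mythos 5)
Your reduction is clean and genuinely different from the paper's: you apply Neumann's lemma to the stabilizers of the two poles of a single fixed bi-proximal $\gamma_0$, whereas the paper first splits into cases according to whether $\Gamma$ contains a nontrivial almost unipotent element (handling that case by conjugating by powers of $u$) and only then applies Neumann's lemma to the stabilizers of the $x_i$. Your use of Lemma~\ref{lem:u_1} to force every nontrivial element of $\Gamma'$ to be bi-proximal with $p \in \{x^+_\alpha, x^-_\alpha\}$, and the computation that $C_{\Gamma'}(\gamma_0)$ is infinite cyclic, are both correct.

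However, the proof is not complete: the entire contradiction rests on the final step, and you explicitly write ``My hope is that \dots'' and ``Granting this step.'' Knowing that $C_{\Gamma'}(\gamma_0)\cong\Zb$ contradicts nothing by itself --- plenty of groups of large cohomological dimension contain elements with cyclic centralizers --- so as written the argument does not rule out $[\Gamma:\Lambda^+]<\infty$. The gap is closable, and in fact by exactly the mechanism you name: since every nontrivial $\alpha\in\Gamma'$ fixes $p$ and hence the functional $f^+$ cutting out $T^{\Cb}_p\partial\Omega$, the map $\alpha\mapsto\log\abs{\lambda(\alpha)}$ is a homomorphism $\Gamma'\to\Rb$; its kernel consists of elements for which $\abs{\lambda(\alpha)}=1$, but for $\alpha$ nontrivial $\abs{\lambda(\alpha)}\in\{\sigma_1(\alpha),\sigma_{d+1}(\alpha)\}$ and neither equals $1$ (else $\abs{\det}=1$ forces all $\sigma_i=1$, making $\alpha$ almost unipotent). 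So the character is injective, $\Gamma'$ is abelian, hence $\Gamma'=C_{\Gamma'}(\gamma_0)\cong\Zb$, contradicting $\mathrm{cd}(\Gamma')=2d\geq 2$. This is essentially the paper's own endgame in disguise: the paper shows the stabilizer is commutative via a commutator--eigenvalue argument (a nontrivial commutator would fix a boundary point with eigenvalue $1$, impossible for a bi-proximal element) and then derives the contradiction from a commutative group acting properly cocompactly on $\Omega\cap L\cong\Hb^2$. You need to actually carry out one of these arguments rather than assume the conclusion.
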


\begin{proof}\
First suppose that $\Gamma$ contains a non-trivial almost unipotent element $u$. Let $\phi \in \Gamma$ be bi-proximal. Since $x^+_{\phi}$ is a not fixed point of any power of $u$, $u^n(x^+_{\phi}) = u^m( x^+_{\phi})$ if and only if $m=n$. Thus $x_1,\dots, x_m$ each appears at most once in the list 
\begin{align*}
x^+_{\phi}, \ u(x^+_{\phi}), \ u^2(x^+_{\phi}), \ u^3(x^+_{\phi}), \dots
\end{align*}
In a similar fashion, $x_1,\dots, x_m$ each appears at most once in the list 
\begin{align*}
x^-_{\phi}, \ u(x^-_{\phi}), \ u^2(x^-_{\phi}), \ u^3(x^-_{\phi}), \dots
\end{align*}
Thus for $N$ sufficiently large $x_1,\dots,x_m,u^N (x^+_{\phi}), u^N(x^-_{\phi})$ are all distinct and so $\gamma = u^N \phi u^{-N}$ satisfies the conclusion of the lemma.

Otherwise every element of $\Gamma \setminus \{1\}$ is bi-proximal. In this case, if there does not exist  $\gamma \in \Gamma \setminus \{1\}$ bi-proximal such that $x_1,\dots, x_m,x^+_{\gamma}, x^-_{\gamma}$ are all distinct then 
\begin{align*}
\Gamma = \cup_{i=1}^m \textrm{Stab}_{\Gamma}(x_i)
\end{align*}
and at least one of $\mathrm{Stab}_{\Gamma}(x_1), \dots, \mathrm{Stab}_{\Gamma}(x_m)$ has finite index in $\Gamma$ (see for instance~\cite[Lemma 4.1]{N1954}). So by passing to a finite index subgroup and possibly relabeling we can assume that $\Gamma$ fixes $x_1$. 

Now let $\Gamma^\prime$ be the preimage of $\Gamma$ under the map $\SL(\Cb^{d+1}) \rightarrow \PSL(\Cb^{d+1})$. Then we can conjugate $\Gamma^\prime$ to be a subset of the matrices of the form:
\begin{align*}
\begin{pmatrix}
\lambda & ^t\vec{v}\\
0 & A 
\end{pmatrix}
\end{align*}
where $\lambda \in \Cb^*$, $\vec{v} \in \Cb^d$ and $A$ is a $d$-by-$d$ matrix. 

We claim that $\Gamma^\prime$ is commutative. Suppose for a contradiction that $\gamma \in [\Gamma^\prime, \Gamma^\prime]$ is non-trivial.  Then $x_1$ is an eigenline of $\gamma$ with eigenvalue  one. Since $\gamma$ is non-trivial $\gamma$ is bi-proximal (by assumption). Then by part (4) of Theorem~\ref{thm:bi_prox} the only fixed points of $\gamma$ in $\partial \Omega$ are $x^+_{\gamma}$ and $x^-_{\gamma}$. Since $x_1 \in \partial \Omega$ is a fixed point with eigenvalue one and $\gamma \in \SL(\Cb^{d+1})$ we have a contradiction. So $\Gamma^\prime$ and hence $\Gamma$ is commutative. 

Now fix $\gamma_0 \in \Gamma \setminus \{1\}$ and let $x^\pm := x^\pm_{\gamma_0}$. Since $\Gamma$ is commutative and the only fixed points of $\gamma_0$ in $\partial \Omega$ are $x^{\pm}$ we see that $\Gamma \cdot \{x^+, x^-\} =  \{x^+, x^-\}$. Since the only fixed points of $\gamma \in \Gamma \setminus \{1\}$ are $x^+_{\gamma}$ and $x^-_{\gamma}$ we have that
\begin{align*}
\{x^+,x^-\}= \{x^+_{\gamma}, x^-_{\gamma} \}.
\end{align*}
Using Theorem~\ref{thm:blow_up} we can pick coordinates such that $x^+=[1:0:\dots:0]$, $x^-=[0:1:0:\dots:0]$, and if $L$ is complex projective line containing $x^+$ and $x^-$ then
\begin{align*}
\Omega \cap L = \{ [1:z:0:\dots:0] : \operatorname{Im}(z) >0\}.
\end{align*}
Since $L$ is $\Gamma$-invariant and $\Gamma \backslash \Omega$ is compact, $\Gamma$ acts co-compactly on $\Omega \cap L$. Also $\Gamma$ acts by isometries on $(\Omega \cap L, d_{\Omega \cap L})$ which by Proposition~\ref{prop:proj_disk} is isometric to hyperbolic real 2-space. But this contradicts the fact that $\Gamma$ is commutative.
\end{proof}

We can now prove Proposition~\ref{prop:str_2} and Proposition~\ref{prop:str_3}.

\begin{proof}[Proof of Proposition~\ref{prop:str_2}]
Suppose $\{\gamma_n\}_{n \in \Nb}, \{\phi_n\}_{n \in \Nb} \subset \Gamma$ are sequences of bi-proximal elements such that $x^+_{\gamma_n} \rightarrow x$ and $x^+_{\phi_n} \rightarrow y$. By passing to subsequences we can assume $x^-_{\gamma_n} \rightarrow x^\prime$ and $x^-_{\phi_n} \rightarrow y^\prime$. 

First suppose that $x,x^\prime, y,y^\prime$ are all distinct. Then for $n$ large $x^+_{\gamma_n}, x^-_{\gamma_n}, x^+_{\phi_n}, x^-_{\phi_n}$ are all distinct and so using Lemma~\ref{lem:str_1} we can find a sequence $\{\psi_n\} \subset \Gamma $ of bi-proximal elements such that $x^+_{\psi_n} \rightarrow x$ and $x^-_{\psi_n} \rightarrow y$. Now let $H_x := T_x^{\Cb} \partial \Omega$, $H_y := T_y^{\Cb} \partial \Omega$, and  $H_n^{\pm}: = T_{x^{\pm}_{\psi_n}}^{\Cb} \partial \Omega$. Since $\partial \Omega$ is $C^1$ and $H_x \neq H_y$,  for $n$ large enough $H_n^+ \neq H_n^-$ and in the of space complex codimension two subspaces $H_n^+ \cap H_n^- \rightarrow H_x \cap H_y$. By Corollary~\ref{cor:special_bi}, there exist bi-proximal elements $\hat{\psi}_n \in \SL(\Cb^{d+1})$ such that 
\begin{enumerate}
\item $[\hat{\psi}_n] \in \Aut_0(\Omega)$, 
\item $\hat{\psi}_n|_{x^+_{\psi_n}} = 2Id|_{x^+_{\psi_n}}$, 
\item $\hat{\psi}_n|_{x^-_{\psi_n}} = (1/2)Id|_{x^-_{\psi_n}}$, and 
\item $\hat{\psi}_n|_{H_n^+ \cap H_n^-} = Id|_{H_n^+ \cap H_n^-}$. 
\end{enumerate}
Now since $x^+_{\psi_n} \rightarrow x$, $x^-_{\psi_n} \rightarrow y$, and $H_n^+ \cap H_n^- \rightarrow H_x \cap H_y$, we see that $\hat{\psi}_n$ converges to $\hat{\psi} \in \SL(\Cb^{d+1})$ such that $\hat{\psi}|_{x}=2Id|_{x}$, $\hat{\psi}|_{y}=(1/2)Id|_{y}$, and $\hat{\psi}|_{H_x \cap H_y} = Id|_{H_x \cap H_y}$. As $\Aut_0(\Omega) \subset \PSL(\Cb^{d+1})$ is closed, $[\hat{\psi}] \in \Aut_0(\Omega)$. This establishes the lemma in this special case.

Next consider case in which $x,x^\prime, y,y^\prime$ are not all distinct. By Lemma~\ref{lem:str_4} there exists $\varphi \in \Gamma$ bi-proximal such that $x^+_{\varphi}$ and $x^-_{\varphi}$ are not in the set $\{x,y,x^\prime, y^\prime\}$. Then for $n$ large $x^+_{\gamma_n}, x^-_{\gamma_n}, x^+_{\varphi}, x^-_{\varphi}$ are all distinct and we can use Lemma~\ref{lem:str_1} to find a sequence $\{ \gamma_n^\prime \} \subset \Gamma$ of bi-proximal elements such that $x^+_{\gamma_n^\prime} \rightarrow x$ and $x^-_{\gamma_n^\prime} \rightarrow x^+_{\varphi}$. So by replacing $\gamma_n$ with $\gamma_n^\prime$ we may suppose $x^+_{\gamma_n} \rightarrow x$ and $x^-_{\gamma_n} \rightarrow x^+_{\varphi}$. A similar argument shows that we may assume $x^+_{\phi_n} \rightarrow y$ and $x^-_{\phi_n} \rightarrow x^-_{\varphi}$. Since $x,y,x^+_{\varphi}, x^-_{\varphi}$ are all distinct we can now apply the argument above.
\end{proof}

\begin{proof}[Proof of Proposition~\ref{prop:str_3}]
Suppose $\{\gamma_n\}_{n \in \Nb} \subset \Gamma$ is a sequence of bi-proximal elements such that $x^+_{\gamma_n} \rightarrow x$. By passing to subsequences we can assume $x^-_{\gamma_n} \rightarrow x^\prime$. Now pick $\gamma \in \Gamma$ bi-proximal such that $x,x^\prime, x^+_{\gamma}, x^-_{\gamma}$ are all distinct. Then using Lemma~\ref{lem:str_1} we can find a sequence of bi-proximal elements $\{ \phi_n \}_{n \in \Nb} \subset \Gamma$ such that $x^+_{\phi_n} \rightarrow x$ and $x^-_{\phi_n} \rightarrow x^+_{\gamma}$. 

By Theorem~\ref{thm:bi_prox}, $T_{x^+_{\gamma}}^{\Cb} \partial \Omega \cap \partial \Omega = \{x^+_{\gamma}\}$ and in particular $T_{x^+_{\gamma}}^{\Cb} \partial \Omega \neq T_{x}^{\Cb} \partial \Omega$. Thus by Proposition~\ref{prop:str_2}, there exists $\varphi \in \Aut_0(\Omega)$ bi-proximal such that $x=x^+_{\varphi}$ and $x^+_{\gamma}=x^-_{\varphi}$. Then by Theorem~\ref{thm:bi_prox} we see that $T_x^{\Cb} \partial \Omega \cap \partial \Omega = \{x\}$.
\end{proof}

\section{The boundary is smooth}\label{sec:smth}

The purpose of this section is to prove the following.

\begin{theorem}\label{thm:smth}
Suppose $\Omega$ is a divisible proper $\Cb$-convex open set with $C^1$ boundary. Then $\partial \Omega$ is a $C^\infty$ embedded submanifold of $\Pb(\Cb^{d+1})$. 
\end{theorem}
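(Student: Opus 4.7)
The plan is to exploit the projective automorphism group $\Aut_0(\Omega)$, which the previous sections have populated with many elements, to present $\partial\Omega$ as a smooth orbit of a Lie group action. Once transitivity of $\Aut_0(\Omega)$ on $\partial\Omega$ is established, the smooth structure of the Lie group will be transferred to $\partial\Omega$ via a standard orbit-map argument, forcing the $C^1$ structure to be $C^\infty$.

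The first step is to verify that $\Aut_0(\Omega)$ acts transitively on $\partial\Omega$. By Corollary~\ref{cor:fin_gen} we may assume the dividing group is torsion-free, so Theorem~\ref{thm:str} applies. Given $x,y \in \partial\Omega$ distinct, Theorem~\ref{thm:str} supplies a bi-proximal element $\varphi \in \Aut_0(\Omega)$ with $x = x^+_\varphi$ and $y = x^-_\varphi$; applying Corollary~\ref{cor:bi_3} to $\varphi$ then produces some $\varphi_{xy} \in \Aut_0(\Omega)$ with $\varphi_{xy}(x) = y$. Fixing a basepoint $x_0 \in \partial\Omega$, consider the smooth orbit map
\begin{equation*}
\phi: \Aut_0(\Omega) \to \Pb(\Cb^{d+1}), \quad g \mapsto g \cdot x_0,
\end{equation*}
whose image is exactly $\partial\Omega$. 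Since the orbit lies in $\partial\Omega$, the image of $d\phi_e$ is contained in $T_{x_0}\partial\Omega$, so the orbit dimension is some $k \leq 2d+1$. On the other hand $\Aut_0(\Omega)$ is second countable, so the orbit is a countable union of embedded $k$-dimensional submanifolds and therefore has Hausdorff dimension at most $k$; but this orbit equals $\partial\Omega$, which is a $C^1$ submanifold of real dimension $2d+1$ and has strictly positive $(2d+1)$-dimensional Hausdorff measure. Hence $k = 2d+1$, and $d\phi_e$ is surjective onto $T_{x_0}\partial\Omega$.

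Now choose a $(2d+1)$-dimensional $C^\infty$ slice $S \subset \Aut_0(\Omega)$ through $e$ transverse to $\ker d\phi_e$. Since $\partial\Omega$ is a $C^1$ embedded submanifold of $\Pb(\Cb^{d+1})$, the restriction $\phi|_S$ factors as a $C^1$ map $\tilde{\phi}: S \to \partial\Omega$ whose differential at $e$ is an isomorphism $T_e S \to T_{x_0}\partial\Omega$. The $C^1$ inverse function theorem yields a neighborhood $S' \subset S$ of $e$ on which $\tilde{\phi}|_{S'}: S' \to V$ is a $C^1$-diffeomorphism onto an open neighborhood $V$ of $x_0$ in $\partial\Omega$. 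But $\phi|_{S'}: S' \to \Pb(\Cb^{d+1})$ is already $C^\infty$ as a map into the ambient projective space, so the open set $V \subset \partial\Omega$ admits a $C^\infty$ parametrization compatible with its ambient embedding. Covering $\partial\Omega$ by such neighborhoods produces a $C^\infty$ atlas, and hence $\partial\Omega$ is a $C^\infty$ embedded submanifold of $\Pb(\Cb^{d+1})$.

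The main obstacle is the dimension count in the second step: a priori nothing prevents $\Aut_0(\Omega)$ from acting via low-dimensional orbits that accumulate set-theoretically onto all of $\partial\Omega$, as happens, for instance, for irrational translations on a torus. The Hausdorff dimension comparison, which crucially uses that $\partial\Omega$ is a genuine $C^1$ manifold and not merely a topological one, is exactly what rules out this pathology and forces the orbit map on a slice to be a local $C^1$-diffeomorphism onto an open piece of $\partial\Omega$, thereby upgrading its smoothness.
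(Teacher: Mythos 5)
Your proof is correct, and its first half (transitivity of $\Aut_0(\Omega)$ on $\partial \Omega$ via Theorem~\ref{thm:str} and Corollary~\ref{cor:bi_3}, after passing to a torsion-free dividing group by Corollary~\ref{cor:fin_gen}) is exactly the paper's argument. Where you diverge is in how transitivity is converted into smoothness: the paper simply invokes the general orbit theorem for Lie group actions --- an orbit is an embedded $C^\infty$ submanifold if and only if it is locally closed --- and notes that $\partial\Omega$ is closed, whereas you reprove the needed special case by hand. Your mechanism is the constant-rank structure of the orbit map plus a Hausdorff-dimension comparison: countable stability of Hausdorff dimension bounds the orbit's dimension above by the rank $k$ of $d\phi_e$, while the a priori $C^1$ structure of $\partial\Omega$ forces $\dim_H \partial\Omega = 2d+1$ and the tangency of the orbit to $\partial\Omega$ gives $k \le 2d+1$; together these pin $k = 2d+1$ and make $\phi$ restricted to a transverse slice a local $C^1$-diffeomorphism onto an open piece of $\partial\Omega$, which is simultaneously a $C^\infty$ embedding into $\Pb(\Cb^{d+1})$. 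This is a legitimate and self-contained substitute for the citation: the standard proof of the orbit theorem excludes dense low-dimensional orbits by a Baire category argument using local compactness of the (closed) orbit, while yours excludes them by measure-theoretic dimension, at the cost of genuinely using the hypothesis that $\partial\Omega$ is already a $C^1$ manifold --- a hypothesis the paper's route does not need at this step. The paper's argument is shorter and more robust (it would work even without the $C^1$ assumption on the orbit closure); yours has the virtue of making explicit why the orbit map is a submersion onto $\partial\Omega$ and of avoiding an external reference.
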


Theorem~\ref{thm:smth} will follow immediately from the next two lemmas

\begin{lemma}
Suppose $G$ is a connected Lie group acting smoothly on a smooth manifold $M$. Then an orbit $G\cdot m$ is an embedded smooth submanifold of $M$ if and only if $G \cdot m$ is locally closed in $M$. 
\end{lemma}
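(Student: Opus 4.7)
The reverse implication is immediate: an embedded submanifold of a smooth manifold is locally modelled on a coordinate subspace of Euclidean space and is therefore locally closed in the ambient manifold. The content lies in the forward direction, which I plan to approach as follows.

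By Cartan's closed subgroup theorem, the stabilizer $H := \{g \in G : g\cdot m = m\}$ is a closed Lie subgroup, so $G/H$ carries a canonical smooth manifold structure and the orbit map descends to a smooth injective immersion $\iota : G/H \to M$ with image $G\cdot m$. Since $\iota$ is $G$-equivariant and $G$ acts transitively on both $G/H$ and $G\cdot m$, it is enough to show that $\iota$ is a smooth embedding in a neighbourhood of the identity coset $eH$. Choosing a vector space complement $\mathfrak{p}$ of $\mathrm{Lie}(H)$ in $\mathrm{Lie}(G)$, the map $X \mapsto \exp(X)\cdot m$ is an immersion at $0$, so on a small ball $B \subset \mathfrak{p}$ its image $S$ is a smooth embedded submanifold of some open neighbourhood $W$ of $m$ in $M$, diffeomorphic via $\iota$ to a small neighbourhood of $eH$ in $G/H$. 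The remaining task is then to produce an open set $W' \subseteq W$ containing $m$ with $G\cdot m \cap W' \subseteq S$; this would verify that $S$ is open in $G\cdot m$ for the subspace topology, making $\iota$ a topological (and hence smooth) embedding at $eH$ and, by equivariance, everywhere.

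The hard part is exactly this last step, and it is precisely where local closedness enters. Suppose no such $W'$ exists. Then there is a sequence $q_n = g_n\cdot m \to m$ in $M$ with $q_n \notin S$, so that the cosets $g_n H \in G/H$ escape every compact neighbourhood of $eH$. Translating the slice yields embedded pieces $g_n\cdot S \subseteq G\cdot m$ passing through the $q_n$ and accumulating near $m$; combining this accumulation with the hypothesis that $G\cdot m \cap U$ is closed in some open $U\ni m$ produces a configuration of orbit points in $U$ that is incompatible with the injective-immersion structure of $\iota$, giving the desired contradiction. The main obstacle is carrying out this contradiction carefully, because the distortions of $S$ under the $g_n$ may be severe; however, this is exactly the classical equivalence ``locally closed orbit $\Longleftrightarrow$ embedded orbit'' for smooth Lie group actions (see, e.g., Lee's \emph{Introduction to Smooth Manifolds}), and in practice I would invoke that standard result rather than reproduce its proof in detail.
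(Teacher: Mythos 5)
The paper does not actually prove this lemma: it simply cites \cite[Theorem 15.3.7]{tD2008}, so your proposal---which disposes of the easy direction directly and then defers the hard direction (locally closed $\Rightarrow$ embedded) to a standard textbook reference---is essentially the same approach. Your sketch of the forward direction is consistent with the classical argument (which is usually completed by a Baire category argument on the locally compact orbit rather than the slice-accumulation contradiction you gesture at), but since you ultimately invoke the standard result, as the paper does, this makes no difference.
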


Here smooth mean $C^\infty$ and for a proof see~\cite[Theorem 15.3.7]{tD2008}. Since $\partial \Omega \subset \Pb(\Cb^{d+1})$ is closed, Theorem~\ref{thm:smth} follows from:

\begin{lemma}
If $\Omega$ satisfies the hypothesis of Theorem~\ref{thm:smth}, then $\Aut_0(\Omega)$ acts transitively on $\partial \Omega$.
\end{lemma}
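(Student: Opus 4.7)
The plan is to deduce this lemma directly from two results already established in the paper: Theorem~\ref{thm:str}, which for any pair of distinct boundary points produces a bi-proximal automorphism with these points as its attracting and repelling fixed points, and Corollary~\ref{cor:bi_3}, which gives transitivity of $\Aut_0(\Omega)$ on $L \cap \partial \Omega$ whenever $L$ is the complex projective line through the two fixed points of some bi-proximal element of $\Aut(\Omega)$.

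First I would reduce to the setting of Theorem~\ref{thm:str} by invoking Corollary~\ref{cor:fin_gen}: since $\Omega$ is divisible, it is divisible by some torsion-free $\Gamma \leq \PSL(\Cb^{d+1})$, which is the hypothesis of Theorem~\ref{thm:str}. Thus, for any two distinct $x, y \in \partial \Omega$, Theorem~\ref{thm:str} produces a bi-proximal element $\varphi \in \Aut_0(\Omega)$ with $x^+_\varphi = x$ and $x^-_\varphi = y$.

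Let $L$ be the complex projective line spanned by $x$ and $y$; then $x, y \in L \cap \partial \Omega$ are both boundary points of $L \cap \Omega$. Applying Corollary~\ref{cor:bi_3} to $\varphi$ yields an element $\varphi_{xy} \in \Aut_0(\Omega)$ with $\varphi_{xy}(x) = y$. Since $x$ and $y$ were arbitrary distinct points of $\partial \Omega$, the group $\Aut_0(\Omega)$ acts transitively on $\partial \Omega$.

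There is no real obstacle at this stage: all the heavy lifting (constructing bi-proximal elements, proving strict $\Cb$-convexity, and extracting the $\SL(\Rb^2)$-actions that give within-line transitivity) has already been carried out in Sections~\ref{sec:bi_prox_or_unip}--\ref{sec:str}. The lemma is essentially a direct packaging of Theorem~\ref{thm:str} together with Corollary~\ref{cor:bi_3}, and the proof is a few lines.
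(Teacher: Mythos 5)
Your proof is correct and follows essentially the same route as the paper: given distinct $x,y \in \partial\Omega$, use Theorem~\ref{thm:str} to produce a bi-proximal $\varphi \in \Aut_0(\Omega)$ with $x = x^+_\varphi$, $y = x^-_\varphi$, and then apply Corollary~\ref{cor:bi_3} to the line through these fixed points. Your version is, if anything, slightly more explicit than the paper's, since you spell out the reduction to a torsion-free dividing group via Corollary~\ref{cor:fin_gen} needed to invoke Theorem~\ref{thm:str}.
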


\begin{proof}
Suppose $x,y \in \partial \Omega$ are distinct and $L$ is the complex projective line containing $x$ and $y$. By Theorem~\ref{thm:str} $\Omega$ is strictly convex and so $L$ intersects $\Omega$. Then by Corollary~\ref{cor:bi_3} there exists $\phi \in \Aut_0(\Omega)$ such that $\phi(x)=y$.
\end{proof}

\section{Completing the proof of Theorem~\ref{thm:main}}\label{sec:complete}

Before finishing the proof of Theorem~\ref{thm:main} we will need one more lemma which follows immediately from Theorem~\ref{thm:blow_up} and Theorem~\ref{thm:str}.

\begin{lemma}\label{lem:proj_disk}
Suppose $\Omega$ is a divisible proper $\Cb$-convex open set with $C^1$ boundary. If $L$ is a complex projective line that intersects $\Omega$ then $\Omega \cap L$ is a projective disk.
\end{lemma}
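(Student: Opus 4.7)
The plan is to find two distinct points $x,y \in L \cap \partial \Omega$, invoke Theorem~\ref{thm:str} to realize them as the attracting and repelling fixed points of a bi-proximal automorphism, and then read off the conclusion directly from Theorem~\ref{thm:blow_up}.

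First I would verify the elementary topological fact that $L \cap \partial \Omega$ contains at least two distinct points. Since $L$ intersects $\Omega$ and since $\overline{\Omega}$ contains no complex projective line (by properness), $L \cap \Omega$ is a non-empty proper open subset of $L$. If $L \cap \partial \Omega$ consisted of a single point $p$, then $L \cap \overline{\Omega}$ would be a closed subset of $L$ whose complement is contained in $\{p\}$, forcing $L \subset \overline{\Omega}$ and contradicting properness. So pick distinct $x,y \in L \cap \partial \Omega$.

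By Corollary~\ref{cor:fin_gen} we may assume $\Omega$ is divided by a torsion-free discrete subgroup of $\PSL(\Cb^{d+1})$, so the hypotheses of Theorem~\ref{thm:str} are in force; applying that theorem yields a bi-proximal element $\varphi \in \Aut_0(\Omega)$ with $x^+_{\varphi} = x$ and $x^-_{\varphi} = y$. The unique complex projective line through $x$ and $y$ is $L$, which therefore coincides with the line generated by $x^+_{\varphi}$ and $x^-_{\varphi}$. Theorem~\ref{thm:blow_up} then supplies coordinates in which
\begin{align*}
\Omega \cap L = \{[1:z:0:\dots:0] : \textrm{Im}(z) > 0\},
\end{align*}
that is, the upper half plane inside an affine chart of $\Pb(\Cb^2)$. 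Since the upper half plane is the image of the open unit disk under a projective transformation of $\Pb(\Cb^2)$ (a Cayley transform, which lies in the $\SL(\Rb^2)$-action from Theorem~\ref{thm:blow_up}), we conclude that $\Omega \cap L$ is a projective disk.

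There is no genuine obstacle here: the lemma is a direct combination of Theorem~\ref{thm:str} and Theorem~\ref{thm:blow_up}. The only point requiring a little care is the initial topological observation that $L$ meets $\partial \Omega$ at two distinct points, without which Theorem~\ref{thm:str} could not be invoked.
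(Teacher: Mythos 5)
Your proposal is correct and follows exactly the route the paper intends: the paper states that this lemma ``follows immediately from Theorem~\ref{thm:blow_up} and Theorem~\ref{thm:str},'' and you have simply filled in those details (pass to a torsion-free dividing group via Corollary~\ref{cor:fin_gen}, realize two boundary points of $L$ as $x^{\pm}_{\varphi}$ via Theorem~\ref{thm:str}, and read off the half-plane description of $\Omega \cap L$ from Theorem~\ref{thm:blow_up}). The only cosmetic remark is that your preliminary claim that $L \cap \partial\Omega$ has two points is best justified by noting that otherwise $L \setminus \{p\}$, which is connected, would be the disjoint union of the open sets $L \cap \Omega$ and $L \setminus \overline{\Omega}$, forcing $L \subset \overline{\Omega}$ and contradicting properness.
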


Now suppose $\Omega$ is a divisible proper $\Cb$-convex open set with $C^1$ boundary. We will show that $\Omega$ is a projective ball. First, using Theorem~\ref{thm:blow_up}, we can find an affine chart $\Cb^d$ containing $\Omega$ such that  $0 \in \partial \Omega$, $T_0 \partial \Omega = \Rb \times \Cb^{d-1}$, and $\Aut_0(\Omega)$ contains the 1-parameter subgroup
\begin{align*}
a_t \cdot (z_1, \vec{z}) = ( e^{-2t} z_1, e^{-t} \vec{z}).
\end{align*}
Since $\partial \Omega$ is $C^\infty$ there exists open neighborhoods $V,W \subset \Rb$ of $0$, an open neighborhood $U \subset \Cb^{d-1}$ of $\vec{0}$, and a $C^\infty$ function $F:V \times U \rightarrow W$ such that if $\Oc = (V+iW) \times U$ then 
\begin{align*}
\partial \Omega \cap \Oc = \mathrm{Graph}(F)=\left\{ (x+iF\left(x,\vec{z}), \vec{z}\right) : x \in V, \vec{z} \in U\right\}. 
\end{align*}
By another projective transformation we can assume 
\begin{align*}
\Omega \cap \Oc = \{ (x+iy, \vec{z}) \in \Oc : y > F(x,\vec{z}) \}
\end{align*}
Now since $\partial \Omega$ is $a_t$ invariant we see that
\begin{align*}
F(x,\vec{z}) = e^{2t} F(e^{-2t} x, e^{-t}\vec{z}).
\end{align*}
Taking the Hessian yields
\begin{align*}
\mathrm{Hess}(F)_{(x,\vec{z})} \Big( (X_1,Z_1), (X_2,Z_2) \Big) 
&= e^{2t} \mathrm{Hess}(F)_{(e^{-2t}x,e^{-t}\vec{z})} \Big( (e^{-2t}X_1, e^{-t}Z_1) , (e^{-2t}X_2, e^{-t}Z_2) \Big)\\
&=\mathrm{Hess}(F)_{(e^{-2t}x,e^{-t}\vec{z})} \Big( (e^{-t}X_1, Z_1) , (e^{-t}X_2, Z_2) \Big).
\end{align*}
Then sending $t \rightarrow \infty$ shows that
\begin{align*}
\mathrm{Hess}(F)_{(x,\vec{z})}  \Big( (X_1,Z_1), (X_2,Z_2) \Big)  = \mathrm{Hess}(F)_{(0,0)}  \Big( (0,Z_1), (0,Z_2) \Big).
\end{align*}
Thus 
\begin{align*}
F(x,\vec{z}) = \frac{1}{2} \mathrm{Hess}(F)_{(0,0)}  \Big( (0,\vec{z}), (0,\vec{z}) \Big)
\end{align*}
for all $x \in V$ and $\vec{z} \in U$. Since $\Omega$ is strictly $\Cb$-convex and $\{0\} \times \Cb^{d-1}$ is the complex tangent hyperplane at $0$, $\Omega \cap\left( \{0\} \times \Cb^{d-1} \right)= \emptyset$. Thus $F(0,\vec{z}) >0$ for all $\vec{z} \in U$ and so the Hessian of $F$ is positive definite on the complex tangent space. 

Now let 
\begin{align*}
H(\vec{z}) = \frac{1}{2} \text{Hess}(F)_{(0, 0)}\Big( (0,\vec{z}),(0,\vec{z})\Big).
\end{align*}
Then
\begin{align*}
 \Omega \cap \Oc= \{ (z_1,\vec{z}) \in \Oc: \text{Im}(z_1) > H(\vec{z}) \}
\end{align*}
and by part (4) of Theorem~\ref{thm:bi_prox}
\begin{align*}
\Omega = \bigcup_{t \in \Rb} a_t (\Oc \cap \Omega).
\end{align*}
Also, if $(w_1, \vec{w}) = a_t (z_1,\vec{z})$ then $\text{Im}(z_1) > H(\vec{z}) \text{ if and only if } \text{Im}(w_1) > H(\vec{w})$. So
\begin{align*}
\Omega = \left\{ (z_1,\vec{z}) \in \Cb^d: \text{Im}(z_1) > H(\vec{z}) \right\}.
\end{align*}

Since the Hessian is a symmetric $\Rb$-linear form
\begin{align*}
 \mathrm{Hess}(F)_{(0,0)}  \Big( (0,\vec{z}), (0,\vec{w}) \Big)  = \sum_{i,j=2}^d \left(A_{i,j} z_i \overline{w}_j + B_{i,j} \overline{z}_i w_j + C_{i,j} z_i w_j + D_{i,j} \overline{z}_i \overline{w}_j\right)
 \end{align*}
 for some real symmetric matrices $(A_{i,j}), (B_{i,j}), (C_{i,j}), (D_{i,j})$. Since the Hessian is real valued, $(A_{i,j})=(B_{i,j})$ and $(C_{i,j})=(D_{i,j})$. In particular we can write $H = \Lc + \textrm{Re}(\Qc)$ where 
\begin{align*}
\Lc(z_2, \dots, z_d) = \sum_{i,j=2}^d \alpha_{i,j} z_i \overline{z_j} \text{ and } \Qc(z_2, \dots, z_d) = \textrm{Re} \left(\sum_{i,j=2}^d \beta_{i,j} z_i z_j \right).
\end{align*}
and $(\alpha_{i,j})$ and $(\beta_{i,j})$ are real symmetric matrices. 
 
We next claim that there exists a basis of $\Cb^{d-1}$ such that $H(\vec{z},\vec{w})=\sum \abs{z_i}^2$ which would imply that $\Omega$ is a projective ball. 

Since $H(\vec{z}) > 0$, $\Lc(e^{i\theta}\vec{z}) = \Lc(\vec{z})$, and $\Qc(e^{i\theta}\vec{z}) = e^{2i\theta}\Qc(\vec{z})$ we see that 
\begin{align*}
\Lc(\vec{z}) > \abs{\Qc(\vec{z})} \geq 0
\end{align*}
for all $\vec{z} \in \Cb^{d-1}$. Thus there exists an orthogonal basis of $\Cb^{d-1}$ such that $\Lc(\vec{z}) = \sum_{i=2}^d \abs{z_i}^2$. Since $(\beta_{ij})$ is a symmetric real matrix there exists an orthonormal basis of $\Cb^{d-1}$ such that
\begin{align*}
\Qc(z_2, \dots, z_d) = \sum_{j=2}^d \beta_{j} z_j^2
\end{align*}
with $\beta_j \geq 0$. With these choices
\begin{align*}
H(\vec{z}) = \sum_{j=2}^d \abs{z_j}^2 + \textrm{Re}\left( \sum_{j=2}^d \beta_j z_j^2\right).
\end{align*}
Since $\Lc(\vec{z}) > \abs{\Qc(\vec{z})}$ we see that $\beta_j < 1$. 

Let $e_1 = (1,0,\dots,0), e_2=(0,1,0,\dots, 0), \dots, e_d = (0,\dots,0,1)$ be the standard basis of $\Cb^d$. For $\epsilon >0$ and $2 \leq j \leq d$, consider the complex line $L^\prime$ parametrized by $z \rightarrow (i\epsilon) e_1 + ze_j$. Since $\epsilon$ is positive,  $(i\epsilon) e_1 \in L^\prime \cap \Omega$  and hence by Lemma~\ref{lem:proj_disk} $L^\prime \cap \Omega$ is a projective disk. Also $(i\epsilon) e_1 + ze_j \in L^\prime \cap \partial \Omega$ if and only if 
\begin{align*}
\epsilon = H(ze_j) = \abs{z}^2 + \textrm{Re}(\beta_j z^2) =  (1 + \beta_j) \textrm{Re}(z)^2 + (1-\beta_j) \textrm{Im}(z)^2.
\end{align*}
Now this describes the boundary of a projective disk if and only if $\beta_j = 0$. Thus we have that 
\begin{align*}
H(\vec{z}) = \sum_{j=2}^d \abs{z_j}^2
\end{align*}
and
\begin{align*}
\Omega = \left\{ (z_1,\vec{z}) \in \Cb^d: \text{Im}(z_1) > \sum_{i=2}^{d} \abs{z_i}^2 \right\}.
\end{align*}
Thus $\Omega$ is a projective ball.

\bibliographystyle{alpha}
\bibliography{hilbert}

\begin{thebibliography}{Won77}

\bibitem[APS04]{APS2004}
Mats Andersson, Mikael Passare, and Ragnar Sigurdsson.
\newblock {\em Complex convexity and analytic functionals}, volume 225 of {\em
  Progress in Mathematics}.
\newblock Birkh\"auser Verlag, Basel, 2004.

\bibitem[Bea98]{B1998}
A.~F. Beardon.
\newblock The {A}pollonian metric of a domain in {${\bf R}^n$}.
\newblock In {\em Quasiconformal mappings and analysis ({A}nn {A}rbor, {MI},
  1995)}, pages 91--108. Springer, New York, 1998.

\bibitem[Ben04]{B2004}
Yves Benoist.
\newblock Convexes divisibles. {I}.
\newblock In {\em Algebraic groups and arithmetic}, pages 339--374. Tata Inst.
  Fund. Res., Mumbai, 2004.

\bibitem[Ben05]{B2005}
Yves Benoist.
\newblock Convexes divisibles. {III}.
\newblock {\em Ann. Sci. \'Ecole Norm. Sup. (4)}, 38(5):793--832, 2005.

\bibitem[Ben08]{B2008}
Yves Benoist.
\newblock A survey on divisible convex sets.
\newblock In {\em Geometry, analysis and topology of discrete groups}, volume~6
  of {\em Adv. Lect. Math. (ALM)}, pages 1--18. Int. Press, Somerville, MA,
  2008.

\bibitem[dlH00]{dlP2000}
Pierre de~la Harpe.
\newblock {\em Topics in geometric group theory}.
\newblock Chicago Lectures in Mathematics. University of Chicago Press,
  Chicago, IL, 2000.

\bibitem[Dub09]{D2009}
Lo{\"{\i}}c Dubois.
\newblock Projective metrics and contraction principles for complex cones.
\newblock {\em J. Lond. Math. Soc. (2)}, 79(3):719--737, 2009.

\bibitem[Fra89]{F1989}
Sidney Frankel.
\newblock Complex geometry of convex domains that cover varieties.
\newblock {\em Acta Math.}, 163(1-2):109--149, 1989.

\bibitem[GH00]{GH2000}
F.~W. Gehring and K.~Hag.
\newblock The {A}pollonian metric and quasiconformal mappings.
\newblock In {\em In the tradition of {A}hlfors and {B}ers ({S}tony {B}rook,
  {NY}, 1998)}, volume 256 of {\em Contemp. Math.}, pages 143--163. Amer. Math.
  Soc., Providence, RI, 2000.

\bibitem[Gol88]{G1988}
William~M. Goldman.
\newblock Geometric structures on manifolds and varieties of representations.
\newblock In {\em Geometry of group representations ({B}oulder, {CO}, 1987)},
  volume~74 of {\em Contemp. Math.}, pages 169--198. Amer. Math. Soc.,
  Providence, RI, 1988.

\bibitem[Gol90]{G1990}
William~M. Goldman.
\newblock Convex real projective structures on compact surfaces.
\newblock {\em J. Differential Geom.}, 31(3):791--845, 1990.

\bibitem[Gol99]{G1999}
William~M. Goldman.
\newblock {\em Complex hyperbolic geometry}.
\newblock Oxford Mathematical Monographs. The Clarendon Press Oxford University
  Press, New York, 1999.
\newblock Oxford Science Publications.

\bibitem[Gro87]{G1987}
M.~Gromov.
\newblock Hyperbolic groups.
\newblock In {\em Essays in group theory}, volume~8 of {\em Math. Sci. Res.
  Inst. Publ.}, pages 75--263. Springer, New York, 1987.

\bibitem[GT87]{GT1987}
M.~Gromov and W.~Thurston.
\newblock Pinching constants for hyperbolic manifolds.
\newblock {\em Invent. Math.}, 89(1):1--12, 1987.

\bibitem[Guo13]{G2013}
Ren Guo.
\newblock Characterizations of hyperbolic geometry among hilbert geometries: A
  survey.
\newblock In {\em Handbook of Hilbert geometry}. European Mathematical Society
  Publishing, to appear in 2013.

\bibitem[H{\"o}r07]{H2007}
Lars H{\"o}rmander.
\newblock {\em Notions of convexity}.
\newblock Modern Birkh\"auser Classics. Birkh\"auser Boston Inc., Boston, MA,
  2007.
\newblock Reprint of the 1994 edition.

\bibitem[IK99]{IK1999}
A.~V. Isaev and S.~G. Krantz.
\newblock Domains with non-compact automorphism group: a survey.
\newblock {\em Adv. Math.}, 146(1):1--38, 1999.

\bibitem[JM87]{JM1987}
Dennis Johnson and John~J. Millson.
\newblock Deformation spaces associated to compact hyperbolic manifolds.
\newblock In {\em Discrete groups in geometry and analysis ({N}ew {H}aven,
  {C}onn., 1984)}, volume~67 of {\em Progr. Math.}, pages 48--106. Birkh\"auser
  Boston, Boston, MA, 1987.

\bibitem[Kap07]{K2007}
Michael Kapovich.
\newblock Convex projective structures on {G}romov-{T}hurston manifolds.
\newblock {\em Geom. Topol.}, 11:1777--1830, 2007.

\bibitem[Kli11]{K2011}
B.~Klingler.
\newblock Local rigidity for complex hyperbolic lattices and {H}odge theory.
\newblock {\em Invent. Math.}, 184(3):455--498, 2011.

\bibitem[Kos68]{K1968}
J.-L. Koszul.
\newblock D\'eformations de connexions localement plates.
\newblock {\em Ann. Inst. Fourier (Grenoble)}, 18(fasc. 1):103--114, 1968.

\bibitem[Kra13]{K2013}
Steven~G. Krantz.
\newblock The impact of the theorem of bun wong and rosay.
\newblock {\em Complex Variables and Elliptic Equations}, to appear in 2013.

\bibitem[Mar13]{M2013}
Ludovic Marquis.
\newblock Around groups in hilbert geometry.
\newblock In {\em Handbook of Hilbert geometry}. European Mathematical Society
  Publishing, to appear in 2013.

\bibitem[Neu54]{N1954}
B.~H. Neumann.
\newblock Groups covered by permutable subsets.
\newblock {\em J. London Math. Soc.}, 29:236--248, 1954.

\bibitem[NPZ08]{NPZ2008}
Nikolai Nikolov, Peter Pflug, and W{\l}odzimierz Zwonek.
\newblock An example of a bounded {${\bf C}$}-convex domain which is not
  biholomorphic to a convex domain.
\newblock {\em Math. Scand.}, 102(1):149--155, 2008.

\bibitem[Pra94]{P1993}
Gopal Prasad.
\newblock {${\bf R}$}-regular elements in {Z}ariski-dense subgroups.
\newblock {\em Quart. J. Math. Oxford Ser. (2)}, 45(180):541--545, 1994.

\bibitem[Qui10]{Q2010}
Jean-Fran{\c{c}}ois Quint.
\newblock Convexes divisibles (d'apr\`es {Y}ves {B}enoist).
\newblock {\em Ast\'erisque}, (332):Exp. No. 999, vii, 45--73, 2010.
\newblock S{\'e}minaire Bourbaki. Volume 2008/2009. Expos{\'e}s 997--1011.

\bibitem[Ros79]{R1979}
Jean-Pierre Rosay.
\newblock Sur une caract\'erisation de la boule parmi les domaines de {${\bf
  C}^{n}$} par son groupe d'automorphismes.
\newblock {\em Ann. Inst. Fourier (Grenoble)}, 29(4):ix, 91--97, 1979.

\bibitem[SM02]{SM2002}
Edith Soci{\'e}-M{\'e}thou.
\newblock Caract\'erisation des ellipso\"\i des par leurs groupes
  d'automorphismes.
\newblock {\em Ann. Sci. \'Ecole Norm. Sup. (4)}, 35(4):537--548, 2002.

\bibitem[Tay11]{T2011}
Michael~E. Taylor.
\newblock {\em Partial differential equations {I}. {B}asic theory}, volume 115
  of {\em Applied Mathematical Sciences}.
\newblock Springer, New York, second edition, 2011.

\bibitem[tD08]{tD2008}
Tammo tom Dieck.
\newblock {\em Algebraic topology}.
\newblock EMS Textbooks in Mathematics. European Mathematical Society (EMS),
  Z\"urich, 2008.

\bibitem[Vey70]{V1970}
Jacques Vey.
\newblock Sur les automorphismes affines des ouverts convexes saillants.
\newblock {\em Ann. Scuola Norm. Sup. Pisa (3)}, 24:641--665, 1970.

\bibitem[Won77]{W1977}
B.~Wong.
\newblock Characterization of the unit ball in {${\bf C}^{n}$} by its
  automorphism group.
\newblock {\em Invent. Math.}, 41(3):253--257, 1977.

\end{thebibliography}

\end{document}